\documentclass{article}
\usepackage{amssymb}
\usepackage{amsmath}
\usepackage{amsthm}
\usepackage{color}
\usepackage{graphicx}
\theoremstyle{plain}
\newtheorem{theorem}{Theorem}[section]
\newtheorem{lemma}{Lemma}[section]
\newtheorem{corollary}{Corollary}[section]
\newtheorem{proposition}{Proposition}[section]

\theoremstyle{definition}
\newtheorem{definition}{Definition}[section]
\newtheorem{example}{Example}[section]

\theoremstyle{remark}
\newtheorem{remark}{Remark}[section]

\addtolength{\oddsidemargin}{-.875in}
	\addtolength{\evensidemargin}{-.875in}
	\addtolength{\textwidth}{1.75in}

	\addtolength{\topmargin}{-.875in}
	\addtolength{\textheight}{1.75in}
\usepackage{lineno}

\begin{document}
\begin{titlepage}
\title{Explicit minimisation of a convex quadratic under a general quadratic constraint: a global, analytic approach}

\author{%
Casper J. Albers \\ Department of Psychometrics and Statistics, \\ University of Groningen, \\ Grote Kruisstraat 2/1, \\ 9712TS Groningen, \\ The Netherlands.  \\ c.j.albers@rug.nl \\ www.casperalbers.nl \\ corresponding author
\and
Frank Critchley \quad John Gower \\Department of Mathematics and Statistics, \\ The Open University, \\ Walton Hall, \\ Milton Keynes MK7 6AA, \\ United Kingdom \\ f.critchley@open.ac.uk \quad john.gower@open.ac.uk
}
\maketitle
\begin{abstract}
A novel approach is introduced to a very widely occurring problem, providing a
complete, explicit resolution of it: minimisation of a convex quadratic under a
general quadratic, equality or inequality, constraint. Completeness comes via
identification of a set of mutually exclusive and exhaustive special cases.
Explicitness, via algebraic expressions for each solution set. Throughout,
underlying geometry illuminates and informs algebraic development. In
particular, centrally to this new approach, affine equivalence is exploited to
re-express the same problem in simpler coordinate systems. Overall, the
analysis presented provides insight into the diverse forms taken both by the
problem itself and its solution set, showing how each may be intrinsically
unstable. Comparisons of this global, analytic approach with the,
intrinsically complementary, local, computational approach of (generalised)
trust region methods point to potential synergies between them. Points of
contact with simultaneous diagonalisation results are noted.

\emph{Keywords} \quad Constrained optimisation, quadratic programming, simultaneous diagonalisation

2000 MSC: 15A18, 90C20, 90C30
\end{abstract}

\end{titlepage}

\section{Introduction} \label{SECTION: Introduction}

\subsection{Background}
This paper introduces a novel approach to a very widely occurring problem:
minimisation of a convex quadratic under a general quadratic, equality or
inequality, constraint. This may arise by itself, or as a component of a
larger problem -- notably, as a single iteration in minimising a smooth convex
objective function under a smooth constraint. In either case, we treat solving
the above problem as of interest \textit{in itself}.

Statistical instances of this problem -- our primary motivation -- occur, for
example, in minimum distance estimation, Bayesian decision theory, generalised
linear models with smooth constraints, canonical variate analysis with fewer
samples than variables, various forms of oblique Procrustes analysis,
Fisher/Guttmann estimation of optimal scores, spline fitting, estimation of
Hardy-Weinberg equilibrium, and size and location constraints in iterative
missing value procedures in Procrustes analysis. Associated literature is
reviewed, united and extended in Albers et al. \cite{AlbersJMVA1}. Some of these instances
are elaborated in Albers et al. \cite{AlbersJMVA2,AlbersCanonical}.

Within the optimisation and numerical analysis literatures, the above problem
has particularly close connections with (generalised) trust region ((G)TR)
methods, being a special case of the GTRS problem defined in \cite{PongWolkowicz}. At the same
time, the global, analytic approach presented here is, intrinsically,
\textit{complementary} to the local, computational one in the GTR literature.
Valuable in itself, this new approach points to potential synergies with
existing methodology, as we briefly indicate in closing.

In the main, GTR methods adapt a nonlinear optimisation protocol to address
the problem on hand. Regularity conditions (constraint qualifications) are
introduced, as required, under which the Karush-Kuhn-Tucker conditions are
necessary for a local optimum, assuming such exists, additional conditions
typically being required to ensure their sufficiency. Algorithms are then
designed to seek a point satisfying \textit{all} of these conditions. The
literature on TR and GTR methods can be accessed via the excellent accounts
presented in \cite{Conn} and \cite{PongWolkowicz,More,SternWolkowicz} respectively. See also \cite{Sorensen,Gander,Elden1982,Xia2015,Wang,TuyHoaiPhuong,Jeyakumar,Jiang},
algorithms to handle the extreme forms of ill-conditioning that can occur
being presented, for example, in \cite{Elden1990,Elden2002}. Recent work in \cite{Adachi}
on non-iterative algorithms extends \cite{Gander1989}, and notes the
possibility of non-unique solutions \cite[cf.][]{Martinez}. Other related
recent work includes \cite{JianLiWu,Ben-Tal2013,Flores-Bazán2013,Locatelli,XingFangEtal}.

In contrast to the GTR approach, the one introduced here exploits a series of
nonsingular affine transformations to re-express the \textit{same} problem in
successively simpler forms, each transformation corresponding to a convenient
change of coordinate system. Solving the last of these \textit{affinely
equivalent}\ forms solves at once the initial problem, via back
transformation. Throughout, underlying geometry illuminates and informs
algebraic development. This approach offers a complete, explicit resolution of
the problem presented at the outset. Completeness comes from analysing
(literally, \textit{splitting}) all of its possible instances into precisely
identified, mutually exclusive and exhaustive, special cases. Explicitness
comes from providing, in each case, closed-form expressions for the solution
set and optimised value attained thereon.

Overall, this new approach highlights the diverse nature of different
instances of both problem and solution set. And, moreover, the possibility of
intrinsic instability -- whereby arbitrarily (hence, undetectably) small changes in problem
specification lead to radical changes in the form of problem and/or solution
set -- pinpointing where this occurs. It may then, in practice, be
\textit{impossible} to be sure which of several forms applies. 

Concerning this
approach, Critchley \cite{CritchleyReport}, cited in \cite{More}, provided an early account of the
strictly convex form, the general case being first addressed in the technical
report  \cite{AlbersCritchleyGowerTechReport}. Again, Gower and Dijksterhuis \cite{GowerDijksterhuis} 
addressed the problem in the context of Procrustes analysis and gave a preliminary algorithm, further 
worked out in Albers and Gower \cite{AlbersProcrustes}.

Finally, in both approaches, there are direct points of contact with
simultaneous diagonalisation results \cite{Newcomb} as detailed below and, for example, in
\cite{Jiang} in the GTR case.

\subsection{Notation and conventions} 
The following notation and conventions are used. Terms involving arrays of
vanishing order are absent. $R^{n}$ is endowed with the standard Euclidean
inner product, inherited by each of its subspaces. Its zero member is denoted by
$0_{n}$, and the span of its first $r$ unit coordinate vectors $S_{r} $.
$M_{n}$\ denotes the set of all $n\times n$\ real symmetric matrices,\ with
zero member $O_{n}$. Subscripts denoting the order of arrays may be omitted
when no confusion is possible. Positive (semi-)definiteness of a matrix $A$
is denoted by $A\succ O$ (respectively, $A\succeq O$), the latter terminology
here implying that $A$ is \textit{singular}. The Moore-Penrose inverse of $A$ is
denoted by $A^{-}$. Finally, $\operatorname*{diag}(\cdot,\ldots,\cdot)$\ denotes a
(block)diagonal matrix with the diagonal entries listed, while $\subset
$\ denotes strict inclusion.

For brevity, straightforward proofs are omitted.

\subsection{The general problem $\mathbb{P}_{\omega}$}

The general equality-constrained problem is as follows.

\begin{definition} \label{DEFINITION: problem}
For $A$, $B$ in $M_{n}$, $t$, $b$ in $R^{n}$\ and $k$ in $R$, $A\neq O_{n}$,
writing $\omega=(A,B,t,b,k)$, the $n$-variable problem $\mathbb{P}_{\omega}
$\ is:%
\begin{align*}
\text{find }\underline{L}_{\omega}  & :=\inf_{x\in R^{n}}L_{\omega
}(x),\;\text{with } L_{\omega}(x):=(x-t)^{\prime}A(x-t),\\
\text{and }\widehat{X}_{\omega}  & :=\{\widehat{x}\in X_{\omega}%
:L_{\omega}(\widehat{x})=\underline{L}_{\omega}\}\\
\text{subject to }Q_{\omega}(x)  & :=x^{\prime}Bx+2b^{\prime}x-k=0\text{,}%
\end{align*}
where the objective (loss) function $L_{\omega}(\cdot)$ is convex and the
feasible set $X_{\omega}:=\{x\in R^{n}:Q_{\omega}(x)=0\}$ nonempty; when the
solution set $\widehat{X}_{\omega}$ is nonempty, $\underline{L}_{\omega}%
=\min\{L_{\omega}(x):x\in X_{\omega}\}$ may be written\ as $\widehat{L}%
_{\omega}$. The set of all such $\omega$\ is denoted by $\Omega_{n}$. Where no
confusion is possible, we may omit the subscript $\omega$. We say that
$\mathbb{P}$\ is (a) \textit{centred} if the target $t=0_{n}$, (b) \textit{a
(partitioned) least-squares problem} if $A$ has the form
$\operatorname*{diag}(1,...1,0,...,0)$, and (c) \textit{a full least-squares problem} if
$A=I$. For any least-squares problem, we partition
\begin{equation}
x=\left(
\begin{array}
[c]{c}%
x_{1}\\
x_{0}%
\end{array}
\right)  ,\;B=\left(
\begin{array}
[c]{ll}%
B_{11} & B_{10}\\
B_{01} & B_{00}%
\end{array}
\right)  ,\;t=\left(
\begin{array}
[c]{c}%
t_{1}\\
t_{0}%
\end{array}
\right)  \text{ \ and \ }b=\left(
\begin{array}
[c]{c}%
b_{1}\\
b_{0}%
\end{array}
\right) \label{DISPLAY: (1,0) partitioning of x, B, t & b}%
\end{equation}
conformably with $A$, subscripts $1$ and $0$ connoting its range and null
spaces, so that $L_{\omega}(x)=\left\Vert x_{1}-t_{1}\right\Vert ^{2}$. In
this way, terms with $0$ subscripts are absent when $A=I_{n}$, so that
$x=x_{1},B=B_{11},t=t_{1}$ and\ $b=b_{1}$.
\end{definition}

\begin{remark}
Denoting the rank of $A$ by $0<r\leq n$, $A\succ O$ and $L(\cdot)$ is strictly
convex when $r=n$, while otherwise $A\succeq O$, with $L(x)=0$ if and only if
$(x-t)$ lies in the $(n-r)$-dimensional null space of $A$. Symmetry of
$A$\ and $B$\ is assumed without loss, while taking $A$ nonzero avoids one
obvious triviality. $X\neq\emptyset$\ avoids another, entailing restrictions
on $(B,b,k)$ characterised (by negation) in Lemma
\ref{RESULT: when does constraint NOT have a solution?} below. The overall sign of
the constraint coefficients can be reversed at will, $\mathbb{P}_{\omega}$
being unchanged under $(B,b,k)\rightarrow(-B,-b,-k)$. Additional consistent
affine constraints do not require separate treatment: they can be substituted
out to arrive at an equivalent instance of $\mathbb{P}_{\omega}$\ in fewer variables.
\end{remark}

\begin{lemma}
\label{RESULT: when does constraint NOT have a solution?}Let $Q(x):=x^{\prime
}Bx+2b^{\prime}x-k$, where $B\in M_{n}$, $b\in R^{n}$\ and $k\in R$. Let
$b$\ have decomposition $b=Bx_{b}+b_{\perp}$, $x_{b}:=B^{-}b$, along the range
and null spaces of $B$, so that $b_{\perp}^{\prime}x_{b}=0$, and let
$k_{+}:=k+b^{\prime}B^{-}b$. Then:
\begin{equation}
Q(x)\equiv(x+x_{b})^{\prime}B(x+x_{b})+2b_{\perp}^{\prime}(x+x_{b}%
)-k_{+}\text{.}\label{DISPLAY: Q:  Lemma 1.1's basic indentity}%
\end{equation}
Accordingly, $Q(x)=0$ does \textbf{not} have a solution if and only if%
\[%
\begin{tabular}[c]{rlll}%
(i): & $[B=O$, & $b_{\perp}=0,$ & $k_{+}\neq0],$\\
or (ii): & $[B$ is nonsingular, & $(-k_{+}B)\succ O],$ & \\
or (iii): & $[B\neq O$ is singular, & $b_{\perp}=0,$ & $k_{+}\neq0,$\\
& $k_{+}B$ has no positive eigenvalues$]$ &  &
\end{tabular}
\]

\end{lemma}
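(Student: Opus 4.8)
My plan is to prove the displayed identity first, then use it to translate the solubility question into a question about the range of a simpler function, after which the three cases fall out from a routine eigenvalue analysis.

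First I would verify the identity (\ref{DISPLAY: Q:  Lemma 1.1's basic indentity}) by direct expansion of its right-hand side, which equals $x'Bx + 2x_{b}'Bx + x_{b}'Bx_{b} + 2b_{\perp}'x + 2b_{\perp}'x_{b} - k_{+}$. Substituting $b = Bx_{b} + b_{\perp}$ and using symmetry of $B$, the two linear-in-$x$ terms collapse to $2b'x$, while the orthogonality $b_{\perp}'x_{b} = 0$ kills the remaining cross term. All that then remains is to match the constants, i.e.\ to check $x_{b}'Bx_{b} = b'B^{-}b$; since $x_{b} = B^{-}b$, this is exactly the Moore--Penrose relation $B^{-}BB^{-} = B^{-}$. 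Hence the right-hand side reduces to $x'Bx + 2b'x - k = Q(x)$.

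With the identity established, I would substitute $y := x + x_{b}$. As $x$ ranges over $R^{n}$ so does $y$, so $Q(x) = 0$ has a solution if and only if $k_{+}$ lies in the range of $g(y) := y'By + 2b_{\perp}'y$; equivalently, $Q(x) = 0$ has \emph{no} solution precisely when $k_{+} \notin g(R^{n})$. To compute this range I would diagonalise $B = U\Lambda U'$ orthogonally and set $z := U'y$, $c := U'b_{\perp}$, giving $g = \sum_{i}\lambda_{i}z_{i}^{2} + 2c'z$. The decisive structural fact is that, because $b_{\perp}$ lies in the null space of $B$, the vector $c$ is supported only on the coordinates with $\lambda_{i} = 0$. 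Consequently, if $b_{\perp} \neq 0$ then (taking $y = s\,b_{\perp}$, for which $y'By = 0$) $g$ contains a genuinely unconstrained linear term and so $g(R^{n}) = R$; a solution then always exists, which already forces $b_{\perp} = 0$ in every no-solution case. When $b_{\perp} = 0$, the range is $\{0\}$ if $B = O$, all of $R$ if $B$ is indefinite, $[0,\infty)$ if every eigenvalue of $B$ is nonnegative with $B \neq O$, and $(-\infty,0]$ in the nonpositive case.

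I would then organise the conclusion along the mutually exclusive, exhaustive trichotomy $B = O$ / $B$ nonsingular / $B \neq O$ singular. The case $B = O$ (where $b_{\perp} = b$, $k_{+} = k$) gives no solution iff $b_{\perp} = 0$ and $k_{+} \neq 0$, namely (i). For $B$ nonsingular, $b_{\perp} = 0$ automatically, and from the range above no solution means $k_{+} < 0$ with $B \succ O$, or $k_{+} > 0$ with $B \prec O$, giving (ii). For $B \neq O$ singular it means $b_{\perp} = 0$, $k_{+} \neq 0$, and no nonzero eigenvalue of $B$ shares the sign of $k_{+}$, giving (iii). The work here is bookkeeping rather than conceptual, and the one point demanding care is the sign translation: confirming that ``$k_{+} < 0$ with $B \succ O$, or $k_{+} > 0$ with $B \prec O$'' collapses to the single clean statement $(-k_{+}B) \succ O$ in (ii), and that ``no nonzero eigenvalue shares the sign of $k_{+}$'' is precisely ``$k_{+}B$ has no positive eigenvalues'' in (iii) --- checking in both that $k_{+} = 0$ is correctly excluded, since it always admits the solution $y = 0$.
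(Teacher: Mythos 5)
Your proof is correct. The paper omits its own proof of this lemma as ``straightforward'', but your argument --- verifying the identity (\ref{DISPLAY: Q:  Lemma 1.1's basic indentity}) via the Moore--Penrose relation $B^{-}BB^{-}=B^{-}$, substituting $y:=x+x_{b}$ to reduce solubility to whether $k_{+}$ lies in the range of $y^{\prime}By+2b_{\perp}^{\prime}y$, computing that range spectrally (with the key observation that $b_{\perp}\neq 0$ yields an unconstrained linear term along the null space, so the range is all of $R$), and then translating the resulting sign conditions into $(-k_{+}B)\succ O$ in case (ii) and ``$k_{+}B$ has no positive eigenvalues'' in case (iii) --- is exactly the route the displayed identity is set up to enable, including the correct exclusion of $k_{+}=0$ via the trivial solution $y=0$.
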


\subsection{Overview of paper \label{SECTION: organisation & overview}}

The solution to $\mathbb{P}_{\omega}$\ when $B=O$\ is well-known. For all
other $\omega\in\Omega$, the infimal value $\underline{L}_{\omega}$\ and
solution set $\widehat{X}_{\omega}$ are given explicitly. The variant of
$\mathbb{P}_{\omega}$ in which the constraint is relaxed to a weak inequality
is also completely resolved. The organisation and principal subsidiary results
of the paper are as follows.

Sections \ref{SECTION: Affine equivalence} and
\ref{SECTION: projected least squares re-formulation}: Theorem
\ref{Proposition: affine change of coordinates} describes \textit{affine
equivalence} on the set of problems $\{\mathbb{P}_{\omega}:\omega\in\Omega
_{n}\}$, Theorem \ref{RESULT: centred, ordered l.s. form}\ showing that every
$\mathbb{P}_{\omega}$\ is affinely equivalent to a centred least-squares problem.

Section \ref{SECTION: inequality constrained varianr} deals with the
inequality constrained case. We say that two minimisation problems with the
same loss function are \textit{effectively equivalent} if they have the same
infimal value attained on the same solution set. A simple continuity argument
shows that, for every $\mathbb{P}_{\omega}$, its weak inequality constraint
variant either has a trivial solution or is effectively equivalent to
$\mathbb{P}_{\omega}$\ itself, Theorem
\ref{RESULT: inequality constrained variant} specifying exactly when each occurs.

Section \ref{SECTION: A p.s.d.}\ discusses the $A\succeq O$ case, Lemma
\ref{RESULT: CLS problem; simplified form}\ showing that every centred
least-squares problem is now affinely equivalent to a simplified form. Theorem
\ref{Theorem 2} establishes that there are at most three possibilities for
such a form, specifying exactly when each occurs. In two of them,
distinguished by whether or not $\widehat{X}$ is empty, $\underline{L}=0$. In
the third, (a) $\underline{L}>0$ while (b) a \textit{reduced form} of
$\mathbb{P}_{\omega}$\ is effectively equivalent to an induced (centred) full
least-squares problem\ in $r<n$\ variables. Remark \ref{REMARK: Newcomb: links with simlts. diagl'n.} establishes direct
points of contact with simultaneous diagonalisation.

Section \ref{SECTION: A p.d.: canonical form}: given the above results, there
is no loss in assuming now that $A$ is positive definite, in which case,
$\mathbb{P}_{\omega}$\ always has a solution (Theorem
\ref{RESULT: A p.d. => there's always a sol'n.}), Theorem
\ref{RESULT: canonical form} establishing that it is affinely -- indeed,
linearly -- equivalent to a \textit{canonical form }$\mathbb{P}_{\omega^{\ast
}}$, governed by the spectral decomposition of $B$.

Section \ref{SECTION: A p.d.: optimised canonical form}: exploiting orthogonal
indeterminacy within eigenspaces, Theorem
\ref{RESULT: canonical form -- optimised}\ establishes that solutions to
$\mathbb{P}_{\omega^{\ast}}$\ can be characterised in terms of those of a
dimension-reduced canonical form $\mathbb{P}_{\omega^{\ast\ast}}$. There is no
loss in restricting attention to \emph{regular} such forms, denoted by $\mathbb{P}%
_{\overline{\omega}^{\ast\ast}}$\ (Remark \ref{REMARK 71}).

Section \ref{SECTION: solving d-r c.f.} presents a series of auxiliary results
culminating in Theorem \ref{RESULT: MAIN final theorem}, specifying the
minimised objective function and solution set for any $\mathbb{P}%
_{\overline{\omega}^{\ast\ast}}$. This completes our primary objective. 

Section \ref{SECTION: Intrinsic instability}\ discusses intrinsic instability of 
solution sets and problem forms, while Section \ref{SECTION: Worked example}\ 
concludes the paper with a short discussion.

\section{Affine equivalence \label{SECTION: Affine equivalence}}

Recalling that the nonsingular affine transformations $G_{n}$ on $R^{n}$\ form
a group under composition, whose general member we denote by $g:x\rightarrow
x_{g}$, the \textit{same} instance of $\mathbb{P}_{\omega}$\ can be
re-expressed in different, affinely equivalent, coordinate systems.

\begin{definition}
For any vector $a$ and for any nonsingular $T$, $g=g_{(T,a)}$\ denotes the map
$x\rightarrow x_{g}:=T^{-1}(x-a)$, inducing $\omega\rightarrow\omega_{g}$ via:%
\[%
\begin{array}
[c]{lllll}%
t & \rightarrow & t_{g} & := & \,T^{-1}(t-a)\\
A & \rightarrow & A_{g} & := & \,T^{\prime}AT,\\
B & \rightarrow & B_{g} & := & \,T^{\prime}BT,\\
b & \rightarrow & b_{g} & := & \,T^{\prime}(b+Ba)\text{ and }\\
k & \rightarrow & k_{g} & := & \,k-a^{\prime}(2b+Ba)\text{.}%
\end{array}
\]
For linear maps ($a=0_{n}$), we abbreviate $g$\ as $g_{T}$.
\end{definition}

\begin{remark} 
Note that: (a) the rank and signature of $A$\ and $B$ are maintained
in those of $A_{g}$\ and $B_{g}$ respectively, (b) $A$\ and $B$ are unchanged
under translation $(T=I)$, and (c) $k$\ is unchanged under linear maps
($a=0_{n}$).
\end{remark} 

\begin{theorem}
\label{Proposition: affine change of coordinates}
For all $\omega\in\Omega_{n}%
$, $x\in R^{n}$ and $g\in G_{n}$: $L_{\omega}(x)=L_{\omega_{g}}(x_{g})$ and
$Q_{\omega}(x)=Q_{\omega_{g}}(x_{g})$, so that $x\in X_{\omega}\Leftrightarrow
x_{g}\in X_{\omega_{g}}$, $\underline{L}_{\omega}=\underline{L}_{\omega_{g}}$,
$\widehat{x}\in\widehat{X}_{\omega}\Leftrightarrow\widehat{x}_{g}%
\in\widehat{X}_{\omega_{g}}$ and $\widehat{X}_{\omega}\neq\emptyset
\Leftrightarrow\widehat{X}_{\omega_{g}}\neq\emptyset$, in which case
$\widehat{L}_{\omega}=\widehat{L}_{\omega_{g}}$.
\end{theorem}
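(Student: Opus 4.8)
The plan is to reduce everything to the two pointwise identities $L_{\omega}(x)=L_{\omega_{g}}(x_{g})$ and $Q_{\omega}(x)=Q_{\omega_{g}}(x_{g})$, valid for every $x\in R^{n}$. Once these are established, all remaining assertions follow formally: since $g=g_{(T,a)}$ is a nonsingular affine map it is a bijection of $R^{n}$ (its inverse being $x_{g}\mapsto Tx_{g}+a$), so $x\mapsto x_{g}$ carries $X_{\omega}$ bijectively onto $X_{\omega_{g}}$ while preserving the loss value at each point, whence the infima, solution sets, their (non)emptiness, and the attained minima all correspond. I would record the bijectivity of $g$ at the outset, as it is precisely what upgrades pointwise identities into set- and value-level equivalences.

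For the loss I would first observe that the translation by $a$ cancels, $x_{g}-t_{g}=T^{-1}(x-a)-T^{-1}(t-a)=T^{-1}(x-t)$. Substituting into $L_{\omega_{g}}(x_{g})=(x_{g}-t_{g})'A_{g}(x_{g}-t_{g})$ with $A_{g}=T'AT$ gives $(x-t)'(T^{-1})'(T'AT)T^{-1}(x-t)=(x-t)'A(x-t)=L_{\omega}(x)$, since $(T^{-1})'T'=I$ and $TT^{-1}=I$. This step is routine.

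The substance is the constraint identity, and the cleanest route is homogenisation. Writing $\widetilde{x}=(x',1)'$ and $\widetilde{B}_{\omega}=\left(\begin{smallmatrix}B & b\\ b' & -k\end{smallmatrix}\right)$, one has $Q_{\omega}(x)=\widetilde{x}'\widetilde{B}_{\omega}\widetilde{x}$. The affine relation $x=Tx_{g}+a$ lifts to $\widetilde{x}=\widetilde{T}\widetilde{x}_{g}$ with $\widetilde{T}=\left(\begin{smallmatrix}T & a\\ 0 & 1\end{smallmatrix}\right)$, so $Q_{\omega}(x)=\widetilde{x}_{g}'(\widetilde{T}'\widetilde{B}_{\omega}\widetilde{T})\widetilde{x}_{g}$, and it remains only to check the congruence $\widetilde{T}'\widetilde{B}_{\omega}\widetilde{T}=\widetilde{B}_{\omega_{g}}$. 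Multiplying out the three blocks and using the symmetry of $B$, the $(1,1)$ block returns $T'BT=B_{g}$, the $(1,2)$ block $T'(b+Ba)=b_{g}$, and the $(2,2)$ scalar $a'(Ba+2b)-k=-k_{g}$, which are exactly the definitions of $\omega_{g}$; hence $Q_{\omega}(x)=\widetilde{x}_{g}'\widetilde{B}_{\omega_{g}}\widetilde{x}_{g}=Q_{\omega_{g}}(x_{g})$.

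With both identities in hand I would conclude: $x\in X_{\omega}\Leftrightarrow Q_{\omega}(x)=0\Leftrightarrow Q_{\omega_{g}}(x_{g})=0\Leftrightarrow x_{g}\in X_{\omega_{g}}$; taking infima of the equal loss values over the bijectively matched feasible sets gives $\underline{L}_{\omega}=\underline{L}_{\omega_{g}}$; and $\widehat{x}\in\widehat{X}_{\omega}$ iff $\widehat{x}\in X_{\omega}$ with $L_{\omega}(\widehat{x})=\underline{L}_{\omega}$, iff $\widehat{x}_{g}\in X_{\omega_{g}}$ with $L_{\omega_{g}}(\widehat{x}_{g})=\underline{L}_{\omega_{g}}$, iff $\widehat{x}_{g}\in\widehat{X}_{\omega_{g}}$, which yields in turn the emptiness equivalence and, when nonempty, $\widehat{L}_{\omega}=\widehat{L}_{\omega_{g}}$. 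There is no deep obstacle here; the only point demanding care is the $(2,2)$ entry of the congruence, where the linear and constant contributions of the translation must combine into precisely the prescribed $k_{g}=k-a'(2b+Ba)$ — the homogenisation bookkeeping is exactly what makes this transparent and guards against sign errors.
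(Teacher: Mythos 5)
Your proof is correct, and the one point of substance is handled properly. Note that the paper itself gives no proof of this theorem: it falls under the blanket convention ``For brevity, straightforward proofs are omitted,'' the intended argument evidently being direct substitution of the definitions of $t_{g},A_{g},B_{g},b_{g},k_{g}$ into $L_{\omega_{g}}(x_{g})$ and $Q_{\omega_{g}}(x_{g})$, followed by exactly the formal transfer of infima and solution sets via bijectivity of $g$ that you spell out at the end. Your treatment of the loss is precisely this direct route. For the constraint you instead package the computation as a congruence of homogenised matrices,
\[
\widetilde{T}'\widetilde{B}_{\omega}\widetilde{T}=\widetilde{B}_{\omega_{g}},\qquad
\widetilde{B}_{\omega}=\begin{pmatrix}B & b\\ b' & -k\end{pmatrix},\quad
\widetilde{T}=\begin{pmatrix}T & a\\ 0' & 1\end{pmatrix},
\]
which is mathematically equivalent to the bare expansion but buys two things: the delicate $(2,2)$ entry is forced automatically (your computation $a'(Ba+b)+b'a-k=a'(Ba+2b)-k=-k_{g}$ checks out against the paper's $k_{g}=k-a'(2b+Ba)$, and the off-diagonal blocks need the symmetry of $B$, which you invoke), and composition of affine maps becomes multiplication of the lifted matrices $\widetilde{T}$, which sits well with the paper's remark that $G_{n}$ is a group under composition. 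The direct expansion the paper presumably had in mind is shorter to state but more error-prone in exactly the place you flag; the homogenisation costs one extra definition and repays it in sign-safety. Either way, your reduction of all set- and value-level claims to the two pointwise identities plus bijectivity of $x\mapsto x_{g}$ is exactly the right logical structure, so the argument is complete as written.
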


\noindent In view of Theorem \ref{Proposition: affine change of coordinates},
we say that $\mathbb{P}_{\omega}$\ and $\mathbb{P}_{\omega_{g}}$ are
\textit{affinely equivalent}, writing $\omega\sim\omega_{g}$. Again, if
$g=g_{(T,a)}$ with $T$ orthogonal, we call $\mathbb{P}_{\omega}$\ and
$\mathbb{P}_{\omega_{g}}$ \textit{Euclideanly }equivalent.

Recall that $\omega\rightarrow\omega_{-}:=(A,-B,t,-b,-k)$ also leaves
$\mathbb{P}_{\omega}$ unchanged. For later use (Section
\ref{SECTION: A p.d.: canonical form}), we note here

\begin{lemma}
\label{RESULT: constraint sign reversal commutes with a.e.}$\mathbb{P}%
_{\omega}\rightarrow\mathbb{P}_{\omega_{-}}$ commutes with $\mathbb{P}%
_{\omega}\rightarrow\mathbb{P}_{\omega_{g}}$, $g\in G$.
\end{lemma}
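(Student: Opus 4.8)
The plan is to verify the commutation claim directly by composing the two operations in both orders and checking that the resulting quintuples $\omega=(A,B,t,b,k)$ agree component by component. The two operations are sign reversal $\omega\rightarrow\omega_{-}=(A,-B,t,-b,-k)$ and the affine map $g=g_{(T,a)}$ inducing $\omega\rightarrow\omega_{g}$ via the formulae in the preceding definition. Since each operation is an explicit transformation on quintuples, establishing that they commute amounts to showing $(\omega_{-})_{g}=(\omega_{g})_{-}$.

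First I would fix $g=g_{(T,a)}$ and compute $(\omega_{-})_{g}$: start from $\omega_{-}=(A,-B,t,-b,-k)$ and apply $g$, so the constraint data become $B_{\text{new}}=T'(-B)T=-T'BT$, $b_{\text{new}}=T'((-b)+(-B)a)=-T'(b+Ba)$, and $k_{\text{new}}=(-k)-a'(2(-b)+(-B)a)=-k+a'(2b+Ba)=-\bigl(k-a'(2b+Ba)\bigr)$, while $A$ and $t_g$ are unaffected by the sign flip. Next I would compute $(\omega_g)_{-}$: apply $g$ to $\omega$ first, yielding $A_g=T'AT$, $B_g=T'BT$, $b_g=T'(b+Ba)$, $k_g=k-a'(2b+Ba)$, then negate $B_g$, $b_g$, $k_g$. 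Comparing the two results, both give $A$ unchanged, $-T'BT$ for the quadratic term, $-T'(b+Ba)$ for the linear term, and $-\bigl(k-a'(2b+Ba)\bigr)$ for the scalar, so they coincide; the target transforms as $t_g$ in both orders since neither the sign flip nor $g$'s action on $t$ interferes with the other.

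The only step requiring any care is the scalar component $k$, where the sign reversal changes the signs of both $b$ and $B$ inside the update $k_g=k-a'(2b+Ba)$, and one must confirm these sign changes propagate consistently regardless of the order; the computation above shows they do, because applying the reversal after $g$ simply negates the whole expression, while applying $g$ after the reversal negates the term $a'(2b+Ba)$ and the leading $k$ in the same coordinated way. I expect no genuine obstacle here: the result is a routine but worthwhile bookkeeping verification, and its value lies in licensing, for later use in the positive-definite canonical-form analysis, the free interchange of constraint sign reversal with any affine change of coordinates.
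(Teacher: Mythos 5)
Your proposal is correct: the paper omits this proof as straightforward, and the direct component-wise verification that $(\omega_{-})_{g}=(\omega_{g})_{-}$ -- with the only nontrivial check being the scalar update $k_{g}=k-a'(2b+Ba)$, which you handle correctly -- is exactly the routine bookkeeping argument the paper intends.
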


\section{Centred least-squares form\label{SECTION: projected least squares re-formulation}}

We characterise here the set of centred least-squares problems to which a
given problem $\mathbb{P}_{\omega}$\ is affinely equivalent. Partitioning $T$
conformably with $B$, as in
(\ref{DISPLAY: (1,0) partitioning of x, B, t & b}), let $\mathcal{T}$\ denote
$\{T:T_{11}$ is orthogonal, $T_{10}=O$ and $T_{00}$\ is nonsingular$\}$,
noting that $\mathcal{T}$\ forms a group under multiplication.

\begin{theorem}
\label{RESULT: centred, ordered l.s. form}
Let $\omega\in\Omega_{n}$ and
$T_{A}:=U_{A}\operatorname*{diag}(D_{A}^{-1},I_{n-r})$ where $A$ has spectral
decomposition $A=U_{A}\operatorname*{diag}(D_{A}^{2},O_{n-r})U_{A}^{\prime}$
with $U_{A}$ orthogonal and $D_{A}$ diagonal, positive definite.
Then: 
\begin{enumerate}
\item[(i)] $\mathbb{P}_{\omega_{g_{0}}}$, $g_{0}=g_{(T_{A},t)}$, is a
centred least-squares problem; \\
\item[(ii)] $\mathbb{P}_{\omega_{g\circ g_{0}}%
}$ is also such a problem if and only if $g=g_{T}$ for some $T\in\mathcal{T}$.
\end{enumerate} 
\end{theorem}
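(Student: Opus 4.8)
The plan is to treat both parts as direct verifications built on the coordinate-transformation rules $\omega\rightarrow\omega_{g}$, exploiting the spectral structure of $A$ and the block structure induced by $r=\operatorname*{rank}(A)$.

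For (i), I would first settle centredness: since $g_{0}=g_{(T_{A},t)}$ takes $a=t$, the target transforms as $t_{g_{0}}=T_{A}^{-1}(t-t)=0_{n}$, so $\mathbb{P}_{\omega_{g_{0}}}$ is centred. It then remains to check that $A_{g_{0}}=T_{A}^{\prime}AT_{A}$ has least-squares form. Substituting $T_{A}=U_{A}\operatorname*{diag}(D_{A}^{-1},I_{n-r})$ and $A=U_{A}\operatorname*{diag}(D_{A}^{2},O_{n-r})U_{A}^{\prime}$ and cancelling the orthogonal factors via $U_{A}^{\prime}U_{A}=I_{n}$, the diagonal blocks collapse to $D_{A}^{-1}D_{A}^{2}D_{A}^{-1}=I_{r}$ and $O_{n-r}$, giving $A_{g_{0}}=\operatorname*{diag}(I_{r},O_{n-r})$, exactly the (partitioned) least-squares form. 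This is a short, purely computational step.

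For (ii), I would use the fact that the coordinate changes compose, so that applying $g$ to the centred least-squares problem $\omega_{g_{0}}$ produces $\omega_{g\circ g_{0}}=(\omega_{g_{0}})_{g}$; the question thus becomes for which $g$ the image $(\omega_{g_{0}})_{g}$ is again a centred least-squares problem. Writing $g=g_{(T,a)}$, the centredness requirement $(t_{g_{0}})_{g}=T^{-1}(0_{n}-a)=-T^{-1}a=0_{n}$ forces $a=0_{n}$, i.e. $g=g_{T}$ is linear. The least-squares requirement is that $(A_{g_{0}})_{g}=T^{\prime}\operatorname*{diag}(I_{r},O_{n-r})T$ again equal $\operatorname*{diag}(I_{r},O_{n-r})$. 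Partitioning $T$ conformably into blocks of sizes $r$ and $n-r$ and multiplying out, this matrix has top-left block $T_{11}^{\prime}T_{11}$, top-right block $T_{11}^{\prime}T_{10}$, and bottom-right block $T_{10}^{\prime}T_{10}$; equating to $\operatorname*{diag}(I_{r},O_{n-r})$ yields $T_{11}^{\prime}T_{11}=I_{r}$ (so $T_{11}$ is orthogonal) and, since $T_{11}$ is then invertible, $T_{10}=O$ (whence the remaining blocks vanish automatically). Finally, with $T_{10}=O$ the matrix $T$ is block lower-triangular, so $\det T=(\det T_{11})(\det T_{00})=\pm\det T_{00}$, and nonsingularity of $T$ (required for $g\in G_{n}$) is equivalent to $T_{00}$ being nonsingular. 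These three conditions---$T_{11}$ orthogonal, $T_{10}=O$, $T_{00}$ nonsingular---are precisely membership in $\mathcal{T}$, and the converse direction is immediate by reversing the computation, giving the stated equivalence.

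The computations are routine; the only point needing care is the bookkeeping in (ii): keeping straight which defining property of $\mathcal{T}$ stems from which hypothesis---centredness forcing $a=0_{n}$, preservation of the least-squares form forcing $T_{11}$ orthogonal and $T_{10}=O$, and the ambient nonsingularity of $g$ forcing $T_{00}$ nonsingular---together with verifying both directions of the biconditional. It is worth recording, consistently with the already-noted fact that $\mathcal{T}$ is closed under multiplication, that $g_{T}$ thereby ranges over exactly the stabiliser of the centred least-squares form.
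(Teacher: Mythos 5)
Your proposal is correct, and it is essentially the paper's intended argument: the authors omit this proof as ``straightforward'', and the expected route is exactly your direct verification from the transformation rules of Definition 2.1 --- $t_{g_{0}}=T_{A}^{-1}(t-t)=0_{n}$ and $A_{g_{0}}=T_{A}^{\prime}AT_{A}=\operatorname*{diag}(I_{r},O_{n-r})$ for part (i), then composing (which is legitimate, as the induced action on $\omega$ is functorial under composition of the affine maps) and reading off from $T^{\prime}\operatorname*{diag}(I_{r},O_{n-r})T=\operatorname*{diag}(I_{r},O_{n-r})$ that $T_{11}$ is orthogonal, $T_{10}=O$, and, via $\det T=\det T_{11}\det T_{00}$, that $T_{00}$ is nonsingular, with the converse by direct substitution. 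Your bookkeeping of which hypothesis yields which defining property of $\mathcal{T}$ matches the paper's framing (cf.\ Remark 2.1, which guarantees the rank of $A_{g\circ g_{0}}$ stays $r$, so the least-squares form must indeed be $\operatorname*{diag}(I_{r},O_{n-r})$ rather than one with a different number of unit entries).
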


In view of Theorems \ref{Proposition: affine change of coordinates}\ and
\ref{RESULT: centred, ordered l.s. form}, there is no loss in restricting
attention to centred least-squares problems $\mathbb{P}_{\omega}$, in which
case $L_{\omega}(x)=\left\Vert x_{1}\right\Vert ^{2}$.

\section{Inequality contrained variant\label{SECTION: inequality constrained varianr}}

We denote by $\mathbb{P}_{\leq}$ the inequality constrained variant of
$\mathbb{P}$ in which the feasible set required to be nonempty $X:=\{x\in
R^{n}:Q(x)=0\}\neq\emptyset$ is replaced by $X_{\leq}:=\{x\in R^{n}%
:Q(x)\leq0\}\neq\emptyset$, its infimal value and solution set being denoted 
by $\underline{L}_{\leq}$ and $\widehat{X}_{\leq}$ respectively. The reverse
inequality is accommodated by changing the overall sign of $(B,b,k)$.

Affine equivalence generalises at once to the inequality constrained case, as
does Theorem \ref{RESULT: centred, ordered l.s. form}. Accordingly, in
discussing $\mathbb{P}_{\leq}$, there is again no loss in restricting
attention to the centred least-squares case, when $L(x)=\left\Vert
x_{1}\right\Vert ^{2}$.

\begin{theorem}
\label{RESULT: inequality constrained variant}Let $\mathbb{P}_{\leq}$ be an
inequality constrained, centred least-squares problem.
\begin{enumerate}
\item[(i)]
When
$A\succ O$: \\ if $Q(0_{n})\leq0$, $\underline{L}_{\leq}=0$ and
$\widehat{X}_{\leq}=\{0_{n}\}$; \\ otherwise, $X\neq\emptyset
$\ and $\mathbb{P}_{\leq}$ is effectively equivalent to $\mathbb{P}$.
\item[(ii)] When $A\succeq O$, putting $X_{0,\leq}:=\{x_{0}\in
R^{n-r}:Q(0_{r}^{\prime},x_{0}^{\prime})^{\prime}\leq0\}$:\\
if $X_{0,\leq}\neq\emptyset$, $\underline{L}_{\leq}=0$ and $\widehat{X}_{\leq
}=\{(0_{r}^{\prime},x_{0}^{\prime})^{\prime}:x_{0}\in X_{0,\leq}\} $;
\\ otherwise, $X\neq\emptyset$\ and $\mathbb{P}_{\leq}$ is
effectively equivalent to $\mathbb{P}$.
\end{enumerate}
\end{theorem}

\begin{proof}
The proof is similar in both cases.
\begin{enumerate}
\item[(i)]If $Q(0_{n})\leq0$,\ the result
is immediate. Else, $Q(0_{n})>0$, continuity of $Q(\cdot)$\ ensuring that,
$\forall\,x\in X_{\leq}$, $Q(x)<0\Rightarrow\exists\,0<\kappa<1$ with
$Q(\kappa x)=0$.
\item[(ii)]  If $X_{0,\leq}\neq\emptyset$, the result is
immediate. Else, $Q(0_{r}^{\prime},x_{0}^{\prime})^{\prime}>0$ $\forall
\,x_{0}\in R^{n-r}$ while, $\forall\,x\in X_{\leq}$, $Q(x)<0\Rightarrow
\exists\,0<\kappa<1$ with $Q(\kappa x_{1}^{\prime},x_{0}^{\prime})^{\prime}=0$.
\end{enumerate}
\end{proof}

\section{Solving $\mathbb{P}_{\omega}$\ when $A$ is positive semi-definite\label{SECTION: A p.s.d.}}

In this section, we take $A$ positive semi-definite ($r<n$), so that $x_{0}$
occurs in the constraint but not in the objective function. Accordingly, any
centred least-squares problem takes an associated reduced form, an immediate
lemma providing geometric insight.

Denoting orthogonal projection of $R^{n}$\ onto $S_{r}$\ by $P:x\rightarrow
x_{1}$, we have:

\begin{definition}
For any centred least-squares problem $\mathbb{P}_{\omega}$ with $r<n$, its
\textit{reduced form} is:%
\begin{equation}
\text{find }\underline{L}_{1}:=\inf\left\{  L_{1}(x_{1}):x_{1}\in
X_{1}\right\}  ,L_{1}(x_{1}):=\left\Vert x_{1}\right\Vert ^{2},X_{1}%
:=P(X)\text{.}\label{DISPLAY: problem (15)}%
\end{equation}
\end{definition}
\begin{remark} 
Note that $X_{1}$\ is (a) nonempty, since $X$\ is nonempty, and (b) given by%
\[
X_{1}=\left\{  x_{1}\in R^{r}:X_{0}(x_{1})\neq\emptyset\right\}
\]
where $X_{0}(x_{1}):=\{x_{0}\in R^{n-r}:Q(x_{1}^{\prime},x_{0}^{\prime
})^{\prime}=0\}$.
\end{remark}

\begin{lemma}
\label{RESULT: CLS problem: solns. 1-1 with those of its RF}Let $\mathbb{P}%
_{\omega}$ be a centred least-squares problem\ with $r<n$. Then $\forall x\in
R^{n}$, $L(x)=L_{1}(x_{1})$ while $x\in X\Leftrightarrow\lbrack x_{1}\in
X_{1},\;x_{0}\in X_{0}(x_{1})]$, so that $\underline{L}_{1}=\underline{L}$,
while $(x_{1}^{\prime},x_{0}^{\prime})^{\prime}$ solves $\mathbb{P}_{\omega
}\Leftrightarrow\lbrack x_{1}$\ solves (\ref{DISPLAY: problem (15)}) and
$x_{0}\in X_{0}(x_{1})]$.
\end{lemma}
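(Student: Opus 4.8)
The plan is to prove the lemma by unwinding the relevant definitions, exploiting the single structural fact that drives everything: since $\mathbb{P}_{\omega}$ is a centred least-squares problem, the loss $L(x)=\left\Vert x_{1}\right\Vert ^{2}$ depends on $x$ only through its projection $x_{1}=Px$, so that $L$ is constant along the fibres $\{x:Px=x_{1}\}$ of $P$. I would treat the four assertions in turn, each reducing to the previous ones. First I would record that $L(x)=\left\Vert x_{1}\right\Vert ^{2}=L_{1}(x_{1})$ holds for every $x\in R^{n}$; this is immediate from the definition of $L_{1}$ together with the form $L(x)=\left\Vert x_{1}\right\Vert ^{2}$ noted for centred least-squares problems at the close of Section \ref{SECTION: projected least squares re-formulation}. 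For the feasibility equivalence I would argue directly: by definition $x=(x_{1}^{\prime},x_{0}^{\prime})^{\prime}\in X$ iff $Q(x_{1}^{\prime},x_{0}^{\prime})^{\prime}=0$, which is precisely the condition $x_{0}\in X_{0}(x_{1})$. Membership $x_{0}\in X_{0}(x_{1})$ forces $X_{0}(x_{1})\neq\emptyset$, i.e. $x_{1}\in X_{1}$ by the characterisation of $X_{1}$ recorded in the Remark following the definition of the reduced form; conversely $x_{0}\in X_{0}(x_{1})$ already gives $Q(x)=0$. Hence $x\in X\Leftrightarrow[x_{1}\in X_{1},\;x_{0}\in X_{0}(x_{1})]$.

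The slightly more substantive step is $\underline{L}_{1}=\underline{L}$. Using the first two parts I would write $X$ as the (disjoint) union of its fibres, $X=\{(x_{1}^{\prime},x_{0}^{\prime})^{\prime}:x_{1}\in X_{1},\,x_{0}\in X_{0}(x_{1})\}$, and compute $\underline{L}=\inf_{x\in X}L_{1}(x_{1})=\inf_{x_{1}\in X_{1}}\inf_{x_{0}\in X_{0}(x_{1})}L_{1}(x_{1})$. Because $L_{1}(x_{1})$ is constant in $x_{0}$ and each fibre $X_{0}(x_{1})$ is nonempty for $x_{1}\in X_{1}$, the inner infimum collapses to $L_{1}(x_{1})$, yielding $\underline{L}=\inf_{x_{1}\in X_{1}}L_{1}(x_{1})=\underline{L}_{1}$.

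Finally, the solution characterisation follows by combining the three preceding facts: $(x_{1}^{\prime},x_{0}^{\prime})^{\prime}$ solves $\mathbb{P}_{\omega}$ iff it is feasible and attains $\underline{L}$. By the feasibility equivalence together with $L(x)=L_{1}(x_{1})$ and $\underline{L}=\underline{L}_{1}$, this holds iff $x_{1}\in X_{1}$ with $L_{1}(x_{1})=\underline{L}_{1}$ (that is, $x_{1}$ solves (\ref{DISPLAY: problem (15)})) and $x_{0}\in X_{0}(x_{1})$. I do not anticipate a genuine obstacle here: the lemma is essentially a bookkeeping statement expressing that optimising over $x$ separates into an outer optimisation over $x_{1}$ and a trivial, constant-objective optimisation over $x_{0}$ along each fibre. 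The only point requiring mild care is the interchange of infima in the second step, and even there the inner infimum is trivial because $L_{1}$ ignores $x_{0}$ and the fibres are nonempty, so the argument is wholly elementary.
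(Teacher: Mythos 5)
Your proof is correct, and it coincides with the argument the paper intends: this is one of the results whose proofs are omitted under the convention announced in Section 1.2 (``For brevity, straightforward proofs are omitted''), the lemma being exactly the bookkeeping consequence of the fibre decomposition $X=\{(x_{1}^{\prime},x_{0}^{\prime})^{\prime}:x_{1}\in X_{1},\;x_{0}\in X_{0}(x_{1})\}$ together with the fact that $L$ depends on $x$ only through $x_{1}$, which is precisely how you argue. Your handling of the one delicate point -- that the inner infimum over each nonempty fibre $X_{0}(x_{1})$ collapses because $L_{1}$ is constant there -- is sound, so nothing needs amending.
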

\noindent Geometrically, the reduced form seeks the infimal (squared) distance
from the origin in $S_{r}$ to the orthogonal projection of the conic $X$ onto
that subspace, its solution set $\widehat{X}_{1}$\ being the orthogonal
projection of $\widehat{X}$ onto $S_{r}$.

To help solve the reduced form (\ref{DISPLAY: problem (15)}), we introduce a
simplifying linear transformation, via a decomposition of the null space of
$A$ according to its intersections with the range and null spaces of $B_{00}$.

\begin{definition}\label{DEF52}
Let $\mathbb{P}_{\omega}$ be a centred least-squares problem\ with $r<n$.
Then, $\mathbb{P}_{\omega}$ is said to be \textit{in simplified form} if, for
some $0\leq s_{0}\leq n-r$ and for some nonsingular diagonal $\Gamma_{0} $ of
order $s_{0}$, $B$ has the partitioned form:%
\begin{equation}
B=\begin{pmatrix}%
B_{11} & C_{10} & O\\
C_{10}^{\prime} & O_{n-r-s_{0}} & O\\
O & O & \Gamma_{0}%
\end{pmatrix}.\label{DISPLAY: B in the simplified form problem}%
\end{equation}
Accordingly, any term involving $\Gamma_{0}$\ is absent if and only
if $B_{00}=O$, and any involving $C_{10}$ if and only if $B_{00}$\ is
nonsingular -- in particular, if $B$\ is (positive or negative) definite.
\end{definition}

\begin{lemma}
\label{RESULT: CLS problem; simplified form}Let $\mathbb{P}_{\omega}$ be a
centred least-squares problem\ with $r<n$. Then, $\exists\;T\in \mathcal{T}$ with
$\mathbb{P}_{\omega_{g_{T}}}$ in simplified form.
\end{lemma}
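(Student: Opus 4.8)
The plan is to build the required $T\in\mathcal{T}$ as a product of two elementary members of the group $\mathcal{T}$, exploiting that (by Theorem \ref{RESULT: centred, ordered l.s. form}(ii)) each such factor keeps us within centred least-squares problems while acting on $B$ by congruence, $B\to T'BT$. Writing $T\in\mathcal{T}$ in the $(1,0)$ partition as $T=\left(\begin{smallmatrix} T_{11} & O \\ T_{01} & T_{00}\end{smallmatrix}\right)$ with $T_{11}$ orthogonal and $T_{00}$ nonsingular, a direct block computation gives $(T'BT)_{00}=T_{00}'B_{00}T_{00}$ and $(T'BT)_{10}=(T_{11}'B_{10}+T_{01}'B_{00})T_{00}$, so the blocks $T_{00}$ and $T_{01}$ are exactly the two levers needed to shape the null-space part of $B$ and to control the coupling between the range and null spaces of $A$.

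First I would diagonalise $B_{00}$. Taking a spectral decomposition $B_{00}=V_0\operatorname{diag}(O_{n-r-s_0},\Gamma_0)V_0'$ with $V_0$ orthogonal, the nonzero eigenvalues collected (zeros ordered first) into the nonsingular diagonal $\Gamma_0$ of order $s_0:=\operatorname{rank}(B_{00})$, I set $T^{(1)}:=\operatorname{diag}(I_r,V_0)\in\mathcal{T}$. Congruence by $T^{(1)}$ leaves $B_{11}$ fixed, sends $B_{00}$ to $\operatorname{diag}(O_{n-r-s_0},\Gamma_0)$, and replaces $B_{10}$ by $B_{10}V_0=:(\tilde C_{10},\tilde D_{10})$ split conformably with the $(n-r-s_0,s_0)$ decomposition of the null space of $A$. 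At this stage $B$ already has the simplified pattern except for the residual coupling block $\tilde D_{10}$ between the range of $A$ and the $\Gamma_0$-eigenspace.

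The second step removes $\tilde D_{10}$ by a shear that completes the square against the now-invertible $\Gamma_0$. In the refined $(r,n-r-s_0,s_0)$ partition I take $T^{(2)}$ equal to the identity except for an $(s_0\times r)$ block $M:=-\Gamma_0^{-1}\tilde D_{10}'$ placed in the lower-left position coupling $x_1$ into the last $s_0$ null coordinates; one checks $T^{(2)}\in\mathcal{T}$, since its $T_{11}=I_r$ is orthogonal, $T_{10}=O$, and $T_{00}=I_{n-r}$. Substituting $x_\gamma\mapsto Mx_1+x_\gamma$ in the quadratic form, the $x_1$–$x_\gamma$ cross term becomes $2x_1'(\tilde D_{10}+M'\Gamma_0)x_\gamma=0$ by the choice of $M$, the $(0,0)$ block retains the form $\operatorname{diag}(O_{n-r-s_0},\Gamma_0)$, and the $x_1$–$x_m$ coupling is untouched; the surviving $(1,1)$ block is $B_{11}-\tilde D_{10}\Gamma_0^{-1}\tilde D_{10}'$, so setting $C_{10}:=\tilde C_{10}$ yields exactly \eqref{DISPLAY: B in the simplified form problem}. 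Taking $T:=T^{(1)}T^{(2)}$, which lies in $\mathcal{T}$ because $\mathcal{T}$ is a group, completes the construction; the degenerate cases $B_{00}=O$ (so $s_0=0$, no $\Gamma_0$ and no second step) and $B_{00}$ nonsingular (so $s_0=n-r$, no middle block and $\tilde C_{10}$ absent) are then covered automatically.

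I expect the only real subtlety to be the bookkeeping in the second step: confirming that the square-completing shear $T^{(2)}$ does not disturb the structure achieved in the first — in particular that it preserves $\operatorname{diag}(O_{n-r-s_0},\Gamma_0)$ in the $(0,0)$ position and leaves $\tilde C_{10}$ intact — and that $\tilde D_{10}$ can be annihilated precisely because $\Gamma_0$, and only $\Gamma_0$, is invertible. This last point is the structural heart of the lemma: coupling into the \emph{null} directions of $B_{00}$ cannot be absorbed, which is exactly why $C_{10}$ must in general persist in the simplified form.
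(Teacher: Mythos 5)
Your proof is correct and is essentially the paper's own argument: your composite $T=T^{(1)}T^{(2)}$ (spectral rotation $\operatorname{diag}(I_r,U_0)$ followed by the shear with lower-left block $-\Gamma_0^{-1}\tilde D_{10}'$) is exactly the matrix $T$ displayed in (\ref{eqnew2015}), the paper merely writing the product at once and leaving the block verification implicit. Your explicit check that the shear lies in $\mathcal{T}$, annihilates $\tilde D_{10}$, preserves $\operatorname{diag}(O_{n-r-s_0},\Gamma_0)$ and $\tilde C_{10}$, and replaces $B_{11}$ by $B_{11}-\tilde D_{10}\Gamma_0^{-1}\tilde D_{10}'$ (harmless, since Definition \ref{DEF52} leaves $B_{11}$ unconstrained) is exactly the bookkeeping the paper omits.
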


\begin{proof}
Let $B_{00}$\ have rank $0\leq s_{0}\leq n-r$ and spectral decomposition
$U_{0}\operatorname*{diag}(O_{n-r-s_{0}},\Gamma_{0})U_{0}^{\prime}$, with
$U_{0}$ orthogonal and $\Gamma_{0}$ nonsingular diagonal. Define $C_{10}$ and
$D_{10}$ implicitly via%
\[
\operatorname*{diag}(I_{r},U_{0})^{\prime}B\operatorname*{diag}(I_{r}%
,U_{0})=\begin{pmatrix}
B_{11} & C_{10} & D_{10}\\
C_{10}^{\prime} & O_{n-r-s_{0}} & O\\
D_{10}^{\prime} & O & \Gamma_{0}%
\end{pmatrix}
\]
and put%
\begin{equation}
T:=\operatorname*{diag}(I_{r},U_{0})
\begin{pmatrix}
I_{r} & O & O\\
O & I_{n-r-s_{0}} & O\\
-\Gamma_{0}^{-1}D_{10}^{\prime} & O & I_{s_{0}}%
\end{pmatrix}. \label{eqnew2015}%
\end{equation}
Then $T\in\mathcal{T}$ and so, by Theorem \ref{RESULT: centred, ordered l.s. form}, $\mathbb{P}%
_{\omega_{g_{T}}}$\ is a centred least-squares problem in simplified form.
\end{proof}

We note in passing that, while preserving simplified form, a further linear transformation establishes direct points of contact with simultaneous diagonalisation.

\begin{remark}
\label{REMARK: Newcomb: links with simlts. diagl'n.} If $\mathbb{P}_\omega$ is in simplified form, while $B_{11}$ in
(\ref{DISPLAY: B in the simplified form problem}) has spectral decomposition:
\[
B_{11}=U_{1}\operatorname*{diag}(O_{r-s_{1}},\Gamma_{1})U_{1}^{\prime},\;\;\Gamma_{1}\text{\ nonsingular,}%
\]
the further transformation $x\rightarrow T^{-1}x$ with $T:=\operatorname*{diag}%
(U_{1},I_{n-r}) \in \mathcal{T}$ induces:%
\begin{equation}
B\rightarrow
\begin{pmatrix}
O_{r-s_{1}} & O & E_{10} & O\\
O & \Gamma_{1} & F_{10} & O\\
E_{10}^{\prime} & F_{10}^{\prime} & O_{n-r-s_{0}} & O\\
O & O & O & \Gamma_{0}%
\end{pmatrix}
\text{ in which }\begin{pmatrix} 
E_{10}\\
F_{10}%
\end{pmatrix}
=U_{1}^{\prime}C_{10}%
\label{DISPLAY: B a la Newcomb in the simplified form problem}%
\end{equation}
so that, using again Theorem \ref{RESULT: centred, ordered l.s. form}, $\mathbb{P}_{\omega_g}$, $g=g_T$, remains in its simplified form. 
This can be seen as extending Newcomb \cite{Newcomb} who showed that any two symmetric
matrices, neither of which is indefinite, can be simultaneously diagonalised.
For, there is no loss in restricting attention to matrices that, like $A$, are
either positive definite or positive semi-definite, which, if true of $B$,
entails that $E_{10}$ and $F_{10}$ in
(\ref{DISPLAY: B a la Newcomb in the simplified form problem}) are absent or
zero respectively.
\end{remark}

Returning to the mainstream, the following additional terms are used.

\begin{definition}\label{DEF53}
For any centred least-squares problem $\mathbb{P}_{\omega}$ in simplified
form, we sub-partition $x_{0}$ and $b_{0}$ so that
\[
x_{0}=
\begin{pmatrix}
y_{0}\\
z_{0}%
\end{pmatrix}
,\;b_{0}=
\begin{pmatrix}
c_{0}\\
d_{0}%
\end{pmatrix}
\text{ \ and \ }B_{00}=
\begin{pmatrix}
O_{n-r-s_{0}} & O\\
O & \Gamma_{0}%
\end{pmatrix}
\]
conform. Accordingly, any term involving $y_{0}$\ or $c_{0}$\ is absent if and
only if $B_{00}$ is nonsingular; and any involving $z_{0}$\ or $d_{0}$\ if and
only if $B_{00}=O$. 

For given $x_{1}$, $Q(\cdot)$\ depends quadratically on
$z_{0}$, but only linearly on $y_{0}$, since%
\begin{equation}
Q(x)\equiv(z_{0}+\Gamma_{0}^{-1}d_{0})^{\prime}\Gamma_{0}(z_{0}+\Gamma
_{0}^{-1}d_{0})+2(C_{10}^{\prime}x_{1}+c_{0})^{\prime}y_{0}+Q_{1}%
(x_{1})\label{DISPLAY: CLS  in simplified form: Q identity}%
\end{equation}
in which $Q_{1}(x_{1}):=x_{1}^{\prime}B_{11}x_{1}+2b_{1}^{\prime}x_{1}-k_{1},$
\ with \ $k_{1}=k_{1}(\omega):=k+d_{0}^{\prime}\Gamma_{0}^{-1}d_{0}$.

\noindent If $B_{00}\neq O$, $Z_{0}(\alpha):=\{z_{0}\in R^{s_{0}%
}:(z_{0}+\Gamma_{0}^{-1}d_{0})^{\prime}\Gamma_{0}(z_{0}+\Gamma_{0}^{-1}%
d_{0})=\alpha\}$, $\alpha\in R$, so that $\alpha(\Gamma_{0}):=\{\alpha
:Z_{0}(\alpha)\neq\emptyset\}$\ is $R$, $[0,\infty)$\ or $(-\infty
,0]$\ according as $\Gamma_{0}$\ is indefinite, positive definite or negative
definite, while $Z_{0}(0)=\{-\Gamma_{0}^{-1}d_{0}\}$\ if $\Gamma_{0}$\ is
definite. If $B_{00}$ is singular and $c_{0}\neq0$, $\alpha(y_{0}%
):=k_{1}-2c_{0}^{\prime}y_{0}$, while $y_{0}(c_{0}):=k_{1}c_{0}/(2\left\Vert
c_{0}\right\Vert ^{2})$, so that $\alpha(y_{0}(c_{0}))=0$.
\end{definition}

In view of previous results, when $r<n$, it suffices to solve $\mathbb{P}%
_{\omega}$\ -- or, giving also each $X_{0}(x_{1})$ $(x_{1}\in X_{1})$, its
reduced form (\ref{DISPLAY: problem (15)}) -- for centred least-squares
problems in simplified form.

\begin{definition}
Let $\mathbb{P}_{\omega}$ with $r<n$ be a centred least-squares problem in
simplified form. Then, $\omega_{1}:=(I_{r},B_{11},0_{r},b_{1},k_{1})$ is
called the \textit{projected form} of $\omega=(\operatorname*{diag}%
(I_{r},O),B,0_{n},b,k)$, and we say that $\mathbb{P}_{\omega}$ admits:
\begin{enumerate}
\item[(a)] \textit{a perfect solution} if $L(x)=0$\ for some $x\in X$;
\item[(b)] \textit{an essentially perfect solution}\ if $L(x)>0$ $(x\in X)$, but
$\underline{L}=0$;
\item[(c)] \textit{a projected, yet imperfect, reduced
form}\ if (i) $X_{\omega_{1}}\neq\emptyset$, so that $\omega_{1}\in\Omega_{r}%
$, and (ii) its reduced form is effectively equivalent to $\mathbb{P}%
_{\omega_{1}}$:
\[
\text{find }\inf\left\{  L_{1}(x_{1}):x_{1}\in X_{\omega_{1}}\right\}
,\;X_{\omega_{1}}=\left\{  x_{1}\in R^{r}:Q_{1}(x_{1})=0\right\}  \text{,}%
\]
in which $k_{1}\neq0$.
\end{enumerate} 
\end{definition}

\begin{example} \label{EXAMPLE 51}
Examples of these three possibilities are the problems, with $r=1$ and $n=2$,
of finding the infimum of $x_{1}^{2}$\ over all $(x_{1},x_{0})^{\prime}%
$\ satisfying, respectively, (a) $x_{1}^{2}-x_{0}^{2}=-1$, (b) $x_{1}x_{0}=1$
and (c) $x_{1}^{2}-x_{0}^{2}=+1$, the corresponding sets $X_{1}$\ being $R$,
$R\backslash\{0\}$\ and $\{x_{1}:x_{1}^{2}\geq1\}$.
\end{example}
\noindent Indeed, there are no other possibilities. Denoting the closure and
boundary of $X_{1}$\ by $cl(X_{1})$ and $\partial X_{1}$, we have:

\begin{theorem}
\label{Theorem 2}Let $\mathbb{P}_{\omega}$ with $r<n$ be a centred
least-squares problem in simplified form. There are at most three, mutually
exclusive, possibilities:
\begin{enumerate} 
\item[(a)] $\mathbb{P}_{\omega}$ admits a perfect solution,
\item[(b)] $\mathbb{P}_{\omega}$ admits an essentially perfect solution,
\item[(c)] $\mathbb{P}_{\omega}$ admits a projected, yet imperfect, reduced form,
\end{enumerate}
whose occurrences are characterised thus:
\begin{enumerate}
\item[(a)] occurs  $\Leftrightarrow0_{r}\in X_{1}$ $\Leftrightarrow\lbrack\underline{L}=0,\widehat{X}\neq\emptyset]$,
\item[(b)] occurs  $\Leftrightarrow0_{r}\in\partial X_{1}$ $\Leftrightarrow\lbrack\underline{L}=0,\widehat{X}=\emptyset]$,
\item[(c)] occurs  $\Leftrightarrow0_{r}\notin cl(X_{1})$ $\Leftrightarrow\underline{L}>0$,
\end{enumerate}
these arising as follows:
\begin{itemize}
\item[(i)] If $s_{0}=0$:
\[
\text{(a) occurs}\Leftrightarrow\lbrack k_{1}=0,\;c_{0}=0]\text{\ or }%
c_{0}\neq0\text{,}%
\]
$X_{0}(0)$ being $R^{n-r}$ or $\{y_{0}(c_{0})\}$ respectively. Otherwise,
$[k_{1}\neq0,\;c_{0}=0]$, with (c) or (b) occurring according as $C_{10}$
does, or does not, vanish. When (c) occurs, $X_{1}=X_{\omega_{1}}$ with each
$X_{0}(x_{1})=R^{n-r}$.\bigskip

\item[(ii)] If $s_{0}=n-r$:
\[
\text{(a) occurs}\Leftrightarrow k_{1}=0\text{\ or }k_{1}\Gamma_{0}\text{ has
a positive eigenvalue,}%
\]
$X_{0}(0)$ being $Z_{0}(0)$ or $Z_{0}(k_{1})$ respectively, while (b) does
\textbf{not} occur. Thus, (c) occurs $\Leftrightarrow(-k_{1}\Gamma
_{0})\succ O$, when $X_{1}=\{x_{1}\in R^{r}:k_{1}Q_{1}(x_{1})\geq0\}$ with
each $X_{0}(x_{1})=\{x_{0}:z_{0}\in Z_{0}(-Q_{1}(x_{1}))\}$.\bigskip

\item[(iii)] If $0<s_{0}<n-r$:
\[%
\begin{tabular}[c]{ll}%
$\text{(a) occurs}\Leftrightarrow$ & $[k_{1}=0,\;c_{0}=0]$ or
$[k_{1}\Gamma_{0}\text{ has a positive eigenvalue, }c_{0}=0]\text{ }$\\
& $\text{or }c_{0}\neq0\text{,}$%
\end{tabular}
\]
$X_{0}(0)$ comprising all $x_{0}$\ with $z_{0}$ in $Z_{0}(0)$, $Z_{0}(k_{1})$
or $\cup_{\alpha(y_{0})\in\alpha(\Gamma_{0})}Z_{0}(\alpha(y_{0}))$,
respectively. Otherwise, $[(-k_{1}\Gamma_{0})\succ O,\;c_{0}=0]$, with (c) or
(b) occurring according as $C_{10}$ does, or does not, vanish. When (c) occurs,
$X_{1}$ and each $X_{0}(x_{1})$ are as in (ii).

\end{itemize}
\end{theorem}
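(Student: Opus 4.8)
The plan is to reduce the statement to the geometry of the projected feasible set $X_1 = P(X)$ and then to extract the explicit structure from the identity (\ref{DISPLAY: CLS  in simplified form: Q identity}). By Lemma \ref{RESULT: CLS problem: solns. 1-1 with those of its RF}, $\underline{L} = \underline{L}_1 = \inf\{\|x_1\|^2 : x_1 \in X_1\}$ and $(x_1',x_0')'$ solves $\mathbb{P}_\omega$ precisely when $x_1$ solves the reduced form and $x_0 \in X_0(x_1)$. As $L(x) = \|x_1\|^2$ is the squared distance of $x_1$ from the origin, its infimal value and attainment over $X_1$ are controlled entirely by the position of $0_r$: one has $\underline{L} = 0 \Leftrightarrow 0_r \in cl(X_1)$, the value $0$ being attained $\Leftrightarrow 0_r \in X_1$, and $\underline{L} > 0 \Leftrightarrow 0_r \notin cl(X_1)$. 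This yields the $[\underline{L},\widehat{X}]$-characterisations at once: (a) $\Leftrightarrow [\underline{L} = 0,\ \widehat{X} \neq \emptyset] \Leftrightarrow 0_r \in X_1$; (b) $\Leftrightarrow [\underline{L} = 0,\ \widehat{X} = \emptyset] \Leftrightarrow 0_r \in cl(X_1) \setminus X_1$; (c) $\Leftrightarrow \underline{L} > 0 \Leftrightarrow 0_r \notin cl(X_1)$. These three locations of $0_r$ form a partition, so the possibilities are mutually exclusive and exhaustive (hence \textquotedblleft at most three\textquotedblright); and since every point of $cl(X_1) \setminus X_1$ is a boundary point, case (b) is the asserted $0_r \in \partial X_1$, the remaining boundary points (those lying in $X_1$) falling under (a).

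To obtain the explicit content I would analyse (\ref{DISPLAY: CLS  in simplified form: Q identity}) for fixed $x_1$. There $Q$ is the sum of a quadratic in $z_0$, whose range of values is exactly $\alpha(\Gamma_0)$; a term $2(C_{10}'x_1 + c_0)'y_0$ that is linear in $y_0$ and therefore spans $R$ precisely when its coefficient $C_{10}'x_1 + c_0$ is nonzero; and the constant $Q_1(x_1)$. Hence $x_1 \in X_1$ iff $-Q_1(x_1)$ is realisable as a member of $\alpha(\Gamma_0)$ plus (an arbitrary real when $C_{10}'x_1 + c_0 \neq 0$, and $0$ otherwise). Splitting on $s_0$ isolates which mechanism is present: only the $y_0$-linear term when $s_0 = 0$ (so the realisable set on the vanishing-coefficient locus is just $\{Q_1(x_1) = 0\}$), only the $z_0$-quadratic term when $s_0 = n-r$, and both when $0 < s_0 < n-r$. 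Evaluating at $x_1 = 0_r$, where the coefficient is $c_0$ and $-Q_1(0_r) = k_1$, yields the membership conditions for $0_r \in X_1$ in each case; reading off $\alpha(\Gamma_0)$ from the inertia of $\Gamma_0$ converts \textquotedblleft$k_1 \in \alpha(\Gamma_0)$\textquotedblright\ into \textquotedblleft$k_1 = 0$ or $k_1\Gamma_0$ has a positive eigenvalue\textquotedblright, its negation being $(-k_1\Gamma_0) \succ O$. Back-substituting the solved identity then delivers the stated forms of $X_0(0_r)$ (via $Z_0(0)$, $Z_0(k_1)$, the union of the $Z_0(\alpha(y_0))$, or $y_0(c_0)$).

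It remains to separate (b) from (c) in the \textquotedblleft otherwise\textquotedblright\ sub-cases and to verify the content of (c). When $0_r \notin X_1$ we have $c_0 = 0$, and the locus on which the $y_0$-term drops out is the subspace $\{C_{10}'x_1 = 0\}$; its complement $\{C_{10}'x_1 \neq 0\} \subseteq X_1$ is open and dense exactly when $C_{10} \neq O$, forcing $0_r \in cl(X_1)$ and hence (b), whereas if $C_{10} = O$ then $X_1$ collapses to a closed set ($\{Q_1(x_1) = 0\}$ when $s_0 = 0$, otherwise $\{x_1 : k_1 Q_1(x_1) \geq 0\}$) that excludes $0_r$, so $0_r \notin cl(X_1)$ and (c) holds. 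The single genuinely non-formal step, and the part I expect to be the main obstacle, is showing in case (c) that the reduced form is \emph{effectively equivalent} to $\mathbb{P}_{\omega_1}$. Here $X_1 = \{x_1 : k_1 Q_1(x_1) \geq 0\} \supseteq X_{\omega_1} = \{x_1 : Q_1(x_1) = 0\}$ with $k_1 \neq 0$; since $X_1$ is closed and $\|\cdot\|^2$ coercive the infimum is attained, and because $0_r \notin X_1$ any minimiser must lie on the boundary $\{Q_1(x_1) = 0\}$, for at an interior point scaling towards the origin would strictly lower the norm while preserving feasibility. Thus the two problems share infimal value and solution set, and $X_{\omega_1} \neq \emptyset$; for $s_0 = 0$ with $C_{10} = O$ this is immediate, since then $X_1 = X_{\omega_1}$ outright.
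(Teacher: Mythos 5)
Your proposal is correct and follows essentially the same route as the paper: reduction via Lemma \ref{RESULT: CLS problem: solns. 1-1 with those of its RF}, analysis of the identity (\ref{DISPLAY: CLS  in simplified form: Q identity}) at $x_{1}=0_{r}$ to characterise $0_{r}\notin X_{1}$, the observation that $0_{r}\notin cl(X_{1})$ forces $C_{10}$ absent or zero, and a scaling/continuity argument to settle case (c). The only difference is presentational: where the paper cites Lemma \ref{RESULT: when does constraint NOT have a solution?} for the membership conditions and Theorem \ref{RESULT: inequality constrained variant} for the effective equivalence of $X_{1}=\{x_{1}:k_{1}Q_{1}(x_{1})\geq0\}$ with $X_{\omega_{1}}$, you re-derive both facts inline (your coercivity-plus-boundary-scaling argument is exactly the content of Theorem \ref{RESULT: inequality constrained variant}'s proof), and your reading of (b) as $0_{r}\in cl(X_{1})\setminus X_{1}$ is the correct gloss of the theorem's $\partial X_{1}$ condition.
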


\begin{proof}
Using Lemma \ref{RESULT: CLS problem: solns. 1-1 with those of its RF}, the
characterisations of the occurrence of (a) and of (b) are immediate. Since
\begin{equation}
0_{r}\notin cl(X_{1})\Leftrightarrow\lbrack\exists\;\eta>0\text{ such
that\ }\left\Vert x_{1}\right\Vert <\eta\Rightarrow x_{1}\notin X_{1}%
]\label{DISPLAY: 0.notin.closure(X_1): alternative form}%
\end{equation}
so, too, is the fact that (c) occurs $\Rightarrow0_{r}\notin cl(X_{1}%
)\Leftrightarrow\underline{L}>0$. Recall that%
\begin{equation}
x_{1}\notin X_{1}\Leftrightarrow Q(x_{1}^{\prime},x_{0}^{\prime})^{\prime}%
\neq0\text{ for all }x_{0}\in R^{n-r}\text{.}%
\label{DISPLAY: x_1 not in X_1 iff Q(x_1, x_0) never 0}%
\end{equation}
Using the $x_{1}=0$ instances of
(\ref{DISPLAY: CLS in simplified form: Q identity}) and
(\ref{DISPLAY: x_1 not in X_1 iff Q(x_1, x_0) never 0}), and identifying the
former as an instance of the identity
(\ref{DISPLAY: Q: Lemma 1.1's basic indentity}), Lemma
\ref{RESULT: when does constraint NOT have a solution?} gives:%
\[
0_{r}\notin X_{1}\Leftrightarrow\left\{
\begin{tabular}[c]{llll}%
\ \ \ \ \ (i): & $[s_{0}=0,$ & $c_{0}=0,$ & $k_{1}\neq0],$\\
\thinspace or\ (ii): & $[s_{0}=n-r,$ & $(-k_{1}\Gamma_{0})\succ O],$ & \\
or (iii): & $[0<s_{0}<n-r,$ & $(-k_{1}\Gamma_{0})\succ O,$ & $c_{0}=0]$%
\end{tabular}
\right\}  .
\]
Similarly, using also (\ref{DISPLAY: 0.notin.closure(X_1): alternative form}),
we have:%
\[
0_{r}\notin cl(X_{1})\Leftrightarrow\lbrack0_{r}\notin X_{1}\text{ and }%
C_{10}\text{\ is either absent }(s_{0}=n-r)\text{ or zero}]\text{.}%
\]
Suppose $0_{r}\notin cl(X_{1})$. If also $s_{0}=0$, $Q(x)=Q_{1}(x_{1})$, so
that $X_{\omega_{1}}=X_{1}\neq\emptyset$ and the reduced form
(\ref{DISPLAY: problem (15)}) of $\mathbb{P}_{\omega}$\ \textit{is}
$\mathbb{P}_{\omega_{1}}$. Thus, (c) occurs, while each $X_{0}(x_{1})=R^{n-r}%
$. If, instead, we also have $s_{0}>0$, $(-k_{1}\Gamma_{0})\succ O$ and
\[
k_{1}Q(x)=k_{1}Q_{1}(x_{1})-(z_{0}+\Gamma_{0}^{-1}d_{0})^{\prime}(-k_{1}%
\Gamma_{0})(z_{0}+\Gamma_{0}^{-1}d_{0}),
\]
so that $X_{1}=\{x_{1}:k_{1}Q_{1}(x_{1})\geq0\}$. Since $X_{1}\neq\emptyset$
while $k_{1}\neq0$, Theorem \ref{RESULT: inequality constrained variant}%
\ establishes that (c) again occurs, while each $X_{0}(x_{1})$\ has the form
stated. The forms taken by $X_{0}(0)$\ when (a) occurs are immediate.
\end{proof}

\begin{corollary}
Under the hypotheses of Theorem \ref{Theorem 2},\ and adopting its
terminology, it is necessary for (b) to occur that $B$ be indefinite.
\end{corollary}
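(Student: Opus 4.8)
The plan is to read off from Theorem \ref{Theorem 2} precisely when possibility (b) can arise and to show that, in each such instance, the off-diagonal block $C_{10}$ of the simplified form (\ref{DISPLAY: B in the simplified form problem}) is non-zero while abutting a vanishing diagonal block; this structural feature alone forces $B$ to be indefinite. No eigenvalue computation, and no appeal to simultaneous diagonalisation, will be needed.

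First I would run through the three sub-cases of Theorem \ref{Theorem 2} according to the value of $s_{0}$. When $s_{0}=n-r$ (part (ii)) the theorem states outright that (b) does not occur, so there is nothing to prove there. In the two remaining sub-cases, $s_{0}=0$ (part (i)) and $0<s_{0}<n-r$ (part (iii)), the theorem records that, in the relevant ``otherwise'' branch, (c) or (b) occurs according as $C_{10}$ does, or does not, vanish; hence (b) occurring entails $C_{10}\neq O$. I would then note that in both of these sub-cases $s_{0}<n-r$, so that the middle diagonal block $O_{n-r-s_{0}}$ of (\ref{DISPLAY: B in the simplified form problem}), indexed by the $y_{0}$-coordinates, is genuinely present, of positive order $n-r-s_{0}$.

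The core step is a direct sign computation on the quadratic form of $B$. Working in the $(x_{1},y_{0},z_{0})$ partition and setting the $z_{0}$-component to zero, I would evaluate $x^{\prime}Bx$ at $x=(s\,u^{\prime},v^{\prime},0^{\prime})^{\prime}$, obtaining $s^{2}u^{\prime}B_{11}u+2s\,u^{\prime}C_{10}v$, since the choice $z_{0}=0$ annihilates every term involving $\Gamma_{0}$ and every off-diagonal block other than $C_{10}$. Because $C_{10}\neq O$, I can pick $u$ and $v$ with $u^{\prime}C_{10}v\neq0$; the resulting quadratic in $s$ has a non-zero linear coefficient, so it takes strictly positive and strictly negative values for small $s$ of opposite signs. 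Thus $x^{\prime}Bx$ assumes values of both signs on $R^{n}$, so $B$ has eigenvalues of both signs; that is, $B$ is indefinite, as required.

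The only point needing care --- the ``main obstacle,'' such as it is --- is the bookkeeping across the sub-cases of Theorem \ref{Theorem 2}: confirming that (b) is excluded when $s_{0}=n-r$ and, in the other two sub-cases, is tied precisely to $C_{10}\neq O$ set against a non-trivial zero diagonal block of positive order. Once that reading is pinned down, the matrix-theoretic conclusion is immediate.
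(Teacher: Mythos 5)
Your proposal is correct and is essentially the paper's own argument read in the contrapositive direction: the paper's one-line proof (``if $B$ is not indefinite, $C_{10}$ is either absent or zero'') invokes exactly the fact you establish explicitly --- that a nonzero block $C_{10}$ abutting the zero diagonal block $O_{n-r-s_{0}}$ forces $x^{\prime}Bx$ to take both signs --- and then, as you do, relies on Theorem \ref{Theorem 2} tying possibility (b) to $C_{10}\neq O$ (with (b) excluded outright when $s_{0}=n-r$). Your explicit sign computation at $x=(s\,u^{\prime},v^{\prime},0^{\prime})^{\prime}$ merely spells out the standard semi-definiteness fact the paper leaves implicit, so the two proofs coincide in substance.
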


\begin{proof}
If $B$ is not indefinite, $C_{10}$\ is either absent or zero.
\end{proof}
\noindent Geometrically, a hyperbolic quadratic constraint is necessary for
$\mathbb{P}$\ to admit an essentially perfect solution.

As Theorem \ref{Theorem 2} shows -- and as the example of (b) given
immediately before it illustrates -- it is possible that $\mathbb{P}_{\omega}$
has no solution when $r<n$, reflecting the fact that, although $X$\ is closed,
its projection $X_{1}=P(X)$\ may not be. In contrast, a simple compactness
argument, used in proving Theorem
\ref{RESULT: A p.d. => there's always a sol'n.} below, shows that
$\mathbb{P}_{\omega}$ \textit{always} has a solution when $r=n$.

\section{Canonical form \label{SECTION: A p.d.: canonical form}}

Given the above results, it suffices to solve the centred, full least-squares
form $A=I$, $t=0$ of problem $\mathbb{P}_{\omega}$. The affinely constrained
case being well-known, there is no essential loss in also requiring $B\neq O$.
Reversing if necessary the overall sign of $(B,b,k) $, and recalling Lemma
\ref{RESULT: constraint sign reversal commutes with a.e.}, there is no loss in
further assuming that $B$ has a positive eigenvalue.

Such a problem\ always has a solution.

\begin{theorem}
\label{RESULT: A p.d. => there's always a sol'n.}$\widehat{X}_{\omega}%
\neq\emptyset$ for any full least-squares problem $\mathbb{P}_{\omega}$.
\end{theorem}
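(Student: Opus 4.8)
The plan is to exploit coercivity of the objective together with closedness of the feasible set, thereby reducing the infimisation over the (generally unbounded) conic $X_{\omega}$ to a minimisation over a compact set, on which a continuous function is guaranteed to attain its minimum. First I would record the two structural facts that drive everything. Since the problem is a full least-squares problem, $A=I_{n}$, so that $L_{\omega}(x)=\left\Vert x-t\right\Vert ^{2}$ is continuous and \emph{coercive}: $L_{\omega}(x)\to\infty$ as $\left\Vert x\right\Vert \to\infty$. Moreover $X_{\omega}=\{x:Q_{\omega}(x)=0\}$ is nonempty by hypothesis (as $\omega\in\Omega_{n}$) and closed, being the preimage of $\{0\}$ under the continuous map $Q_{\omega}(\cdot)$.

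Next I would fix any $x_{0}\in X_{\omega}$ and set $\rho:=L_{\omega}(x_{0})$. The restricted sublevel set $K:=\{x\in X_{\omega}:L_{\omega}(x)\leq\rho\}$ is then nonempty (it contains $x_{0}$), closed (as the intersection of the closed set $X_{\omega}$ with a closed sublevel set of the continuous $L_{\omega}$), and bounded (by coercivity of $L_{\omega}$), hence compact. As $L_{\omega}$ is continuous, it attains its minimum on $K$ at some $\widehat{x}\in K\subseteq X_{\omega}$. Finally I would check that this $\widehat{x}$ actually solves $\mathbb{P}_{\omega}$, via a dichotomy: for any $x\in X_{\omega}$, either $L_{\omega}(x)\leq\rho$, whence $x\in K$ and $L_{\omega}(\widehat{x})\leq L_{\omega}(x)$ by choice of $\widehat{x}$; or $L_{\omega}(x)>\rho\geq L_{\omega}(\widehat{x})$, the last inequality holding since $\widehat{x}\in K$. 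In either case $L_{\omega}(\widehat{x})\leq L_{\omega}(x)$, so $\widehat{x}$ attains $\underline{L}_{\omega}$ and $\widehat{X}_{\omega}\neq\emptyset$.

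There is no serious obstacle here; the argument is entirely standard once coercivity of $L_{\omega}$ (equivalently, that $A=I_{n}\succ O$, so $r=n$) and closedness of $X_{\omega}$ are in hand. The only point needing a moment's care is the reduction to a compact set: one must confirm that the infimum over the whole of $X_{\omega}$ coincides with the minimum over the compact sublevel set $K$, which is precisely what the dichotomy in the last step secures. It is worth emphasising that coercivity is exactly the feature lost when $A\succeq O$ is merely positive semi-definite ($r<n$); this is why, as the preceding discussion and Example \ref{EXAMPLE 51}(b) show, a solution can then fail to exist, whereas the present $r=n$ hypothesis restores it.
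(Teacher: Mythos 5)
Your argument is correct and is essentially the paper's own proof: the compact set $X_{\omega}\cap\{x:\left\Vert x-t\right\Vert \leq\left\Vert \widetilde{x}-t\right\Vert\}$ used there is exactly your sublevel set $K$ with $\widetilde{x}=x_{0}$. You have merely made explicit the routine details (closedness, boundedness via coercivity, and the dichotomy confirming that the minimum over $K$ equals the infimum over $X_{\omega}$) that the paper leaves to the reader.
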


\begin{proof}
For any $\widetilde{x}\in X_{\omega}$, $\mathbb{P}_{\omega}$\ is essentially
equivalent to minimising $\left\Vert x-t\right\Vert ^{2}$\ over the compact
set $X_{\omega} \cap \{  x\in R^{n}:\left\Vert x-t\right\Vert
\leq\left\Vert \widetilde{x}-t\right\Vert \}$.
\end{proof}
\noindent Geometrically, there is always a shortest normal from $t$ to the
conic $X$.

We introduce next the notion of a canonical form, a full least-squares form of
$\mathbb{P}_{\omega}$\ in which the constraint has been simplified according
to the spectral decomposition of $B$. In it, $u_{m}:=(1,0_{m-1}^{\prime
})^{\prime}$ denotes the first unit coordinate vector in $R^{m}$.

\begin{definition}
\label{DEFINITION: canonical form}Let $\mathbb{P}_{\omega}$\ be a centred,
full least-squares problem in which $B$ has a positive eigenvalue. Let $B$
have rank $s$ and distinct nonzero eigenvalues $\gamma_{1}>...>\gamma_{q}$
$(1\leq q\leq s)$, so that $\gamma_{1}>0$. For each $i=1,...,q$, let
$\gamma_{i}$ have multiplicity $m_{i}\geq1$ and let the orthogonal projection
of $b$\ onto the corresponding eigenspace have length $l_{i}\geq0 $. Let
$m_{0}:=n-s\geq0$ denote the dimension of the null space of $B$, and
$l_{0}\geq0$ the length of the orthogonal projection of $b$\ onto this
subspace. \smallskip\newline
With $\varepsilon:=l_{0}$\ and $\delta=(\delta_i) \in R^q$, $\delta
_{i}:=\left\vert \gamma_{i}\right\vert ^{-1}l_{i}$, let
$\omega^{\ast}:=(A^{\ast},B^{\ast},t^{\ast},b^{\ast},k^{\ast})$ in which
$A^{\ast}=I $, $B^{\ast}=\operatorname*{diag}(O_{m_{0}},\Gamma)$ where
$\Gamma:=\operatorname*{diag}(\gamma_{1}I_{m_{1}},...,\gamma_{q}I_{m_{q}})$,
\[
t^{\ast}=
\begin{pmatrix}
0_{m_{0}}\\
\delta_{1}u_{m_{1}}\\
\vdots\\
\delta_{q}u_{m_{q}}%
\end{pmatrix}
, b^{\ast}=
\begin{pmatrix}
\varepsilon u_{m_{0}}\\
0_{m_{1}}\\
\vdots\\
0_{m_{q}}%
\end{pmatrix}
\]
and $k^{\ast}=k+%
{\textstyle\sum\nolimits_{i=1}^{q}}
\gamma_{i}^{-1}l_{i}^{2}$. Then, we call $\mathbb{P}_{\omega^{\ast}}$ the
\textit{canonical form} of $\mathbb{P}_{\omega}$.
\end{definition}
\noindent The reason for this terminology is made clear by the following result.

\begin{theorem}
\label{RESULT: canonical form}$\mathbb{P}_{\omega}$\ is Euclideanly equivalent
to $\mathbb{P}_{\omega^{\ast}}$.
\end{theorem}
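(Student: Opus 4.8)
The plan is to construct an explicit Euclidean change of coordinates $g=g_{(T,a)}$, with $T$ orthogonal, carrying $\omega$ to $\omega^{\ast}$, and then to invoke the definition of Euclidean equivalence (Theorem~\ref{Proposition: affine change of coordinates} in the case $T$ orthogonal) to conclude. Since $\mathbb{P}_{\omega}$ is a centred full least-squares problem we have $A=I$ and $t=0_{n}$; thus for \emph{any} orthogonal $T$ the rule $A\rightarrow T^{\prime}AT$ already gives $A_{g}=I=A^{\ast}$, so the whole task reduces to choosing $(T,a)$ reproducing $B^{\ast}$, $t^{\ast}$, $b^{\ast}$ and $k^{\ast}$.

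First I would fix the rotation. By the spectral theorem, choose $T$ orthogonal with $T^{\prime}BT=\operatorname*{diag}(O_{m_{0}},\Gamma)=B^{\ast}$, ordering the distinct nonzero eigenvalues as $\gamma_{1}>\dots>\gamma_{q}$ and placing the null space of $B$ first; this is exactly the ordering built into Definition~\ref{DEFINITION: canonical form}, so $B_{g}=B^{\ast}$. Such a $T$ is determined only up to an orthogonal transformation within each eigenspace (and within the null space), and I would spend this residual freedom on the linear term: since orthogonal maps preserve lengths, the block of $T^{\prime}b$ lying in the null-space coordinates can be rotated to $\varepsilon u_{m_{0}}$ (recall $\varepsilon=l_{0}$), and the block lying in the $\gamma_{i}$-coordinates to $\operatorname*{sign}(\gamma_{i})\,l_{i}\,u_{m_{i}}$. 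In short, $T$ can be chosen so that $T^{\prime}b=b^{\ast}+B^{\ast}t^{\ast}$, the key numerical point being $\gamma_{i}\delta_{i}=\operatorname*{sign}(\gamma_{i})l_{i}$, which follows from $\delta_{i}=|\gamma_{i}|^{-1}l_{i}$.

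Next I would fix the translation by completing the square in the range of $B$. Requiring $t_{g}=T^{-1}(t-a)=-T^{\prime}a$ to equal $t^{\ast}$ forces $a:=-Tt^{\ast}$. Using $BT=TB^{\ast}$ (equivalent to $T^{\prime}BT=B^{\ast}$), the linear-term rule gives $b_{g}=T^{\prime}(b+Ba)=T^{\prime}b-B^{\ast}t^{\ast}$, which equals $b^{\ast}$ by the alignment just arranged. Geometrically, this translation moves the origin so as to absorb the linear part of the constraint in precisely the directions where $B$ is nonsingular, leaving the null-space linear part $\varepsilon u_{m_{0}}$ untouched, which is why it survives as $b^{\ast}$.

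Finally I would verify the constant. Substituting $a=-Tt^{\ast}$ and $Ba=-TB^{\ast}t^{\ast}$ into $k_{g}=k-a^{\prime}(2b+Ba)$ and using $T^{\prime}b=b^{\ast}+B^{\ast}t^{\ast}$ reduces the cross term to $t^{\ast\prime}T^{\prime}b=t^{\ast\prime}b^{\ast}+t^{\ast\prime}B^{\ast}t^{\ast}$; here $t^{\ast\prime}b^{\ast}=0$ because $t^{\ast}$ and $b^{\ast}$ have disjoint supports, while $t^{\ast\prime}B^{\ast}t^{\ast}=\sum_{i}\gamma_{i}\delta_{i}^{2}=\sum_{i}\gamma_{i}^{-1}l_{i}^{2}$. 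This yields $k_{g}=k+\sum_{i=1}^{q}\gamma_{i}^{-1}l_{i}^{2}=k^{\ast}$, completing the identification $\omega_{g}=\omega^{\ast}$. The main obstacle is the bookkeeping of the rotation step: keeping the signs $\operatorname*{sign}(\gamma_{i})$ straight when aligning the projections of $b$, so that the subsequent translation cancels the linear term exactly in the range directions and reproduces the target $t^{\ast}$ with $\delta_{i}=|\gamma_{i}|^{-1}l_{i}$. Once the identity $\gamma_{i}\delta_{i}=\operatorname*{sign}(\gamma_{i})l_{i}$ is in hand, the remaining computations for $b_{g}$ and $k_{g}$ are routine.
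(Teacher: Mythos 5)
Your proposal is correct and follows essentially the same route as the paper's proof: spectral decomposition of $B$, use of the residual orthogonal indeterminacy within each eigenspace (and the null space) to align the projections of $b$ with the first coordinate vectors, and a translation completing the square in the range of $B$, verified against the transformation rules for $(A,B,t,b,k)$. If anything, your explicit tracking of the identity $\gamma_{i}\delta_{i}=\operatorname*{sign}(\gamma_{i})l_{i}$ is slightly more careful than the paper's construction, which aligns each $d_{i}$ with $+l_{i}u_{m_{i}}$ and leaves the sign adjustment needed for negative eigenvalues ($\delta_{i}=|\gamma_{i}|^{-1}l_{i}\geq 0$) implicit.
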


\begin{proof}
By the spectral decomposition of $B$, there is an orthogonal matrix $T_{B}$
with $T_{B}^{\prime}BT_{B}=\operatorname*{diag}(O_{m_{0}},\Gamma)$, where
$T_{B}$\ is unique up to postmultiplication by $U\in\mathcal{U}%
:=\{\operatorname*{diag}(U_{0},U_{1},...,U_{q}):$ $U_{i}^{\prime}%
U_{i}=I_{m_{i}}\;(0\leq i\leq q)\}$.

Let $T_{B}^{\prime}b=(c^{\prime},d_{1}^{\prime},...,d_{q}^{\prime})^{\prime}$
and $T_{B}^{\prime}BT_{B}$ conform, so that $\left\Vert c\right\Vert =l_{0}%
$\ while $\left\Vert d_{i}\right\Vert =l_{i}$ $(1\leq i\leq q)$, and let now
$U=U_{B}\in\mathcal{U}$ be such that the first column of $U_{0}$ is
$c/\left\Vert c\right\Vert $ if $c\neq0$ and of $U_{i}$ $(1\leq i\leq q)$ is
$d_{i}/\left\Vert d_{i}\right\Vert $ if $d_{i}\neq0$.

Finally, let $g:x\rightarrow x_{g}=U_{B}^{\prime}\{T_{B}^{\prime}x+(0^{\prime
},\gamma_{1}^{-1}d_{1}^{\prime},...,\gamma_{q}^{-1}d_{q}^{\prime})^{\prime}%
\}$. Then, using Lemma \ref{RESULT: when does constraint NOT have a solution?}%
, $\mathbb{P}_{\omega}$\ is Euclideanly equivalent to $\mathbb{P}_{\omega_{g}%
}=\mathbb{P}_{\omega^{\ast}}$.
\end{proof}

\begin{remark}
Viewed geometrically, the isometry inducing $\omega\rightarrow\omega^{\ast}$
so translates, rotates and reflects coordinate axes that: \smallskip
\newline(a) $B^{\ast}$, determining the quadratic part of the constraint,
becomes diagonal,\smallskip\newline(b) $b^{\ast}$, determining its linear
part, vanishes when $B^{\ast}$\ is nonsingular and, otherwise, has at most one
nonzero element -- its coordinate $\varepsilon\geq0$ on the first dimension of
the null space of $B^{\ast}$ -- and: \smallskip\newline(c) the target
$t^{\ast}$\ is \textit{orthogonal} to this subspace, and has at most one
nonzero element associated with each nonzero eigenvalue of $B^{\ast}$ -- its
coordinate $\delta_{i}\geq0$ on the first dimension of the corresponding eigenspace.
\end{remark}

As\ is explicit in the proof of Theorem \ref{RESULT: canonical form}, the
simple structure enjoyed by a canonical form within each eigenspace of
$B^{\ast}$ is made possible by a well-known orthogonal indeterminacy there.
This simple structure invites a natural dimension reduction, further
exploiting this same indeterminacy.

\section{Dimension-reduced canonical form\label{SECTION: A p.d.: optimised canonical form}}

Solutions to $\mathbb{P}_{\omega^{\ast}}$ can be characterised in terms of
those of a dimension-reduced canonical form $\mathbb{P}_{\omega^{\ast\ast}}$,
having just one variable for each distinct eigenvalue of $B^{\ast}$. Variables
$z=(z_{i})\in R^{q}$\ enter the constraint \textit{quadratically}.\ When
$B^{\ast}$\ is singular, an additional scalar variable $y$ associated with its
null space enters the constraint \textit{linearly}.

A unified account is provided in which we denote these variables by $w\in
R^{\underline{n}}$, whether $B^{\ast}$\ is nonsingular $(m_{0}=0)$ or not
$(m_{0}>0)$. In the former case, $\underline{n}=q$\ and $w=z$. In the latter,
$\underline{n}=q+1$ and $w=(y,z^{\prime})^{\prime}$.

\begin{definition}
\label{DEFINITION: dimn-reduced canonical form}Adopting the assumptions and
notation of Definition \ref{DEFINITION: canonical form}, let $\Gamma^{\ast}:=\operatorname*{diag}%
(\gamma_{1},...,\gamma_{q})$, so that $\Gamma^{\ast}$ is either positive
definite $(\gamma_{q}>0)$ or nonsingular indefinite $(\gamma_{1}>0>\gamma_{q}%
$, requiring $q>1)$. \smallskip\newline If $B^{\ast}$ is singular, let
$w_{0}:=(0,\delta^{\prime})^{\prime}$\ and $\Delta:=\operatorname*{diag}%
(0,\Gamma^{\ast})$ conform with $w:=(y,z^{\prime})^{\prime}$, while
$d:=\varepsilon u_{q+1}$. Otherwise, put $w:=z$, $w_{0}:=\delta$%
,\ $\Delta:=\Gamma^{\ast}$ and $d:=0_{q}$. Then, the \textit{dimension-reduced
canonical form} of $\mathbb{P}_{\omega^{\ast}}$\ is $\mathbb{P}_{\omega
^{\ast\ast}}$, $\omega^{\ast\ast}:=(A^{\ast\ast},B^{\ast\ast},t^{\ast\ast
},b^{\ast\ast},k^{\ast\ast})$, with $A^{\ast\ast}=I_{\underline{n}}$,
$B^{\ast\ast}=\Delta$, $t^{\ast\ast}=w_{0}$, $b^{\ast\ast}=d$\ and
$k^{\ast\ast}=k^{\ast}$. That is, the problem $\mathbb{P}_{\omega^{\ast\ast}}%
$\ is to:
\begin{align}
\text{find }\underline{L}^{\ast}  & :=\inf_{w\in R^{\underline{n}}}%
L^{\ast}(w),\;L^{\ast}(w):=\left\Vert w-w_{0}\right\Vert ^{2},\nonumber\\
\text{and }\widehat{W}  & :=\{\widehat{w}\in W:L^{\ast}(w)=\underline{L}%
^{\ast}\}\nonumber\\
\text{subject to }Q^{\ast}(w)  & :=w^{\prime}\Delta w+2d^{\prime}%
w-k^{\ast}=0\text{,}\label{DISPLAY: dimn-reduced canonical form}%
\end{align}
where the feasible and solution sets $W:=\{w\in R^{\underline{n}}:Q^{\ast
}(w)=0\}$\ and $\widehat{W}$\ are nonempty, as $X_{\omega}$ and $\widehat{X}%
_{\omega}$\ are so. We may write $\widehat{w}\in\widehat{W}$\ as
$\widehat{z}=(\widehat{z}_{i})$ $(m_{0}=0)$, or $(\widehat{y},\widehat{z}%
^{\prime})^{\prime}$ $(m_{0}>0)$. Reflecting the form of the conic, when
$d=0$, we call the constraint in $\mathbb{P}_{\omega^{\ast\ast}}$
\textit{elliptic} or \textit{hyperbolic} according as $\gamma_{q}>0$ or
$\gamma_{q}<0$. When $d\neq0$, we call it \textit{parabolic-elliptic} or
\textit{parabolic-hyperbolic} in the same two cases.
\end{definition}
\begin{remark}
When each eigenvalue of $B$ -- equivalently, of $B^{\ast
}$ -- is simple, no such dimension reduction is possible, and $\mathbb{P}%
_{\omega^{\ast\ast}}$\ coincides with $\mathbb{P}_{\omega^{\ast}}$. That is,%
\[
\lbrack s=q\text{ and }m_{0}\in\{0,1\}]\Rightarrow\lbrack\underline{n}=n\text{
and }\omega^{\ast\ast}=\omega^{\ast}].
\]
\end{remark}
\begin{theorem}
\label{RESULT: canonical form -- optimised}Let $\widehat{x}:=(\widehat{x}%
_{0}^{\prime},\widehat{x}_{1}^{\prime},...,\widehat{x}_{q}^{\prime})^{\prime}%
$, where $\widehat{x}_{i}\in R^{m_{i}}$ $(0\leq i\leq q)$.\medskip\ Then,
$\widehat{x}\ $solves $\mathbb{P}_{\omega^{\ast}}$ if and only if%
\[
\widehat{x}_{0}=\widehat{y}u_{m_{0}}\text{ and, for }i=1,...,q\text{,
}\left\{
\begin{tabular}
[c]{ll}%
$\widehat{x}_{i}=\widehat{z}_{i}u_{m_{i}}$ & if $\delta_{i}>0$\\
$\left\Vert \widehat{x}_{i}\right\Vert ^{2}=\widehat{z}_{i}^{2}$ & if
$\delta_{i}=0$%
\end{tabular}
\right\}  ,
\]
where $\widehat{w}=(\widehat{y},\widehat{z}^{\prime})^{\prime}$ solves
$\mathbb{P}_{\omega^{\ast\ast}}$. In particular, $\underline{L}^{\ast
}=\underline{L}$.
\end{theorem}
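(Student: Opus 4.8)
The plan is to work directly with the block structure of the canonical form. Writing $x=(x_{0}^{\prime},x_{1}^{\prime},\ldots,x_{q}^{\prime})^{\prime}$ conformably with $B^{\ast}=\operatorname*{diag}(O_{m_{0}},\Gamma)$ and letting $(x_{0})_{1}$ denote the first coordinate of $x_{0}$, Definition \ref{DEFINITION: canonical form} gives
\[
L_{\omega^{\ast}}(x)=\left\Vert x_{0}\right\Vert ^{2}+\sum_{i=1}^{q}\left\Vert x_{i}-\delta_{i}u_{m_{i}}\right\Vert ^{2},\qquad Q_{\omega^{\ast}}(x)=\sum_{i=1}^{q}\gamma_{i}\left\Vert x_{i}\right\Vert ^{2}+2\varepsilon(x_{0})_{1}-k^{\ast}.
\]
The decisive observation is that the constraint sees $x$ only through the scalars $(x_{0})_{1}$ and $\{\left\Vert x_{i}\right\Vert ^{2}\}_{i=1}^{q}$, while each loss summand depends only on $\left\Vert x_{0}\right\Vert ^{2}$, respectively on $(x_{i})_{1}$ and $\left\Vert x_{i}\right\Vert ^{2}$ via $\left\Vert x_{i}-\delta_{i}u_{m_{i}}\right\Vert ^{2}=\left\Vert x_{i}\right\Vert ^{2}-2\delta_{i}(x_{i})_{1}+\delta_{i}^{2}$. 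This is exactly the orthogonal indeterminacy within each eigenspace that motivates the dimension reduction.

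First I would introduce a projection $\pi:X_{\omega^{\ast}}\rightarrow W$ together with a family of lifts. Given feasible $x$, set $\pi(x)=w=(y,z^{\prime})^{\prime}$ (or $w=z$ when $m_{0}=0$) with $y:=(x_{0})_{1}$ and $z_{i}:=\left\Vert x_{i}\right\Vert$; since $z_{i}^{2}=\left\Vert x_{i}\right\Vert ^{2}$, the reduced constraint $Q^{\ast}(w)=0$ holds, so $w\in W$. Conversely, call $x$ a \emph{lift} of $w\in W$ if $x_{0}=yu_{m_{0}}$, $x_{i}=z_{i}u_{m_{i}}$ whenever $\delta_{i}>0$, and $\left\Vert x_{i}\right\Vert ^{2}=z_{i}^{2}$ (any direction) whenever $\delta_{i}=0$; every such $x$ lies in $X_{\omega^{\ast}}$. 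The two loss comparisons then follow block by block. For a lift, each summand evaluates exactly, $\left\Vert x_{0}\right\Vert ^{2}=y^{2}$ and $\left\Vert x_{i}-\delta_{i}u_{m_{i}}\right\Vert ^{2}=(z_{i}-\delta_{i})^{2}$ (using $\delta_{i}=0$ in the second case), so $L_{\omega^{\ast}}(x)=L^{\ast}(w)$ for every lift; hence $\underline{L}_{\omega^{\ast}}\leq\underline{L}^{\ast}$. For the projection, the coordinate bound $(x_{i})_{1}\leq\left\Vert x_{i}\right\Vert =z_{i}$ yields $\left\Vert x_{i}-\delta_{i}u_{m_{i}}\right\Vert ^{2}-(z_{i}-\delta_{i})^{2}=2\delta_{i}(z_{i}-(x_{i})_{1})\geq0$, with equality iff $x_{i}=z_{i}u_{m_{i}}$ when $\delta_{i}>0$ and automatically when $\delta_{i}=0$; similarly $\left\Vert x_{0}\right\Vert ^{2}-y^{2}\geq0$ with equality iff $x_{0}=yu_{m_{0}}$. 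Summing, $L^{\ast}(\pi(x))\leq L_{\omega^{\ast}}(x)$, so $\underline{L}^{\ast}\leq\underline{L}_{\omega^{\ast}}$, and therefore $\underline{L}^{\ast}=\underline{L}_{\omega^{\ast}}=\underline{L}$.

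Finally I would read off the solution-set equivalence from these same equality conditions. If $x$ solves $\mathbb{P}_{\omega^{\ast}}$, then $L^{\ast}(\pi(x))\leq L_{\omega^{\ast}}(x)=\underline{L}_{\omega^{\ast}}=\underline{L}^{\ast}$ forces both $\pi(x)\in\widehat{W}$ and equality throughout, which is precisely the stated block form with $\widehat{w}=\pi(x)$; conversely, any lift of a point of $\widehat{W}$ is feasible and, by the lift identity, attains $L_{\omega^{\ast}}=\underline{L}^{\ast}=\underline{L}_{\omega^{\ast}}$, hence solves $\mathbb{P}_{\omega^{\ast}}$. The dichotomy in the statement is exactly the dichotomy in the equality condition: when $\delta_{i}>0$ the loss pins down the direction, giving $\widehat{x}_{i}=\widehat{z}_{i}u_{m_{i}}$, whereas when $\delta_{i}=0$ only the norm $\left\Vert \widehat{x}_{i}\right\Vert ^{2}=\widehat{z}_{i}^{2}$ is determined. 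The one point requiring care -- the main, if modest, obstacle -- is a uniform treatment of the null-space block and its linear term across the cases $m_{0}=0$, $m_{0}>0$ with $\varepsilon>0$, and $m_{0}>0$ with $\varepsilon=0$ (where $y$ drops out of the constraint and $\widehat{y}=0$), together with the sign bookkeeping within each eigenblock.
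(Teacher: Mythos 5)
Your proposal is correct: the projection/lift argument, with the blockwise equality conditions $(x_i)_1\leq\left\Vert x_i\right\Vert$ pinning down the direction exactly when $\delta_i>0$, is precisely the ``orthogonal indeterminacy within eigenspaces'' argument the paper alludes to in introducing the dimension reduction. The paper itself omits this proof as straightforward (per its stated convention), so your write-up simply supplies the intended argument in full, including the uniform handling of the $m_0=0$ and $m_0>0$ cases.
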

\noindent Clearly, $\delta_{i}>0\Rightarrow\widehat{z}_{i}\geq0$. Moreover,
Theorem \ref{RESULT: canonical form -- optimised}\ has the immediate

\begin{corollary}
\label{RESULT: charc'n of when d-r c.f. solution determines multiple c.f. solns}%
$\widehat{w}$\ solving $\mathbb{P}_{\omega^{\ast\ast}}$\ determines more than
one solution to $\mathbb{P}_{\omega^{\ast}}\Leftrightarrow\lbrack\delta_{i}=0$
and $\widehat{z}_{i}\neq0]$ for some $i$, in which case the sign of
$\widehat{z}_{i}$\ is indeterminate, while $\widehat{x}_{i}$\ is orthogonally indeterminate.
\end{corollary}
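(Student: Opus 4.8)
The plan is to read the statement directly off the parametrisation of solutions supplied by Theorem \ref{RESULT: canonical form -- optimised}. I would fix any $\widehat{w}=(\widehat{y},\widehat{z}^{\prime})^{\prime}$ solving $\mathbb{P}_{\omega^{\ast\ast}}$ and examine the set of all $\widehat{x}=(\widehat{x}_{0}^{\prime},\ldots,\widehat{x}_{q}^{\prime})^{\prime}$ that it determines as solutions of $\mathbb{P}_{\omega^{\ast}}$ via that theorem. First I would note that the null-space block is pinned down, $\widehat{x}_{0}=\widehat{y}u_{m_{0}}$, and that for every index $i$ with $\delta_{i}>0$ the block $\widehat{x}_{i}=\widehat{z}_{i}u_{m_{i}}$ is likewise uniquely determined. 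Hence all the freedom resides in the blocks with $\delta_{i}=0$, for which the theorem imposes only the single condition $\|\widehat{x}_{i}\|^{2}=\widehat{z}_{i}^{2}$.

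The counting step is then immediate. The set $\{v\in R^{m_{i}}:\|v\|^{2}=\widehat{z}_{i}^{2}\}$ is the singleton $\{0_{m_{i}}\}$ when $\widehat{z}_{i}=0$, and otherwise is a sphere of strictly positive radius, hence contains more than one point. Consequently $\widehat{w}$ determines a unique solution to $\mathbb{P}_{\omega^{\ast}}$ precisely when $\widehat{z}_{i}=0$ for every $i$ with $\delta_{i}=0$, and more than one solution precisely when $[\delta_{i}=0,\ \widehat{z}_{i}\neq0]$ holds for some $i$ — which is exactly the stated equivalence.

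For the closing clause I would record two separate indeterminacies arising in such a block. On the one hand, since $m_{i}\geq1$, the positive-radius sphere $\{\|\widehat{x}_{i}\|^{2}=\widehat{z}_{i}^{2}\}$ always admits the two antipodal points $\pm\widehat{z}_{i}u_{m_{i}}$ and, when $m_{i}>1$, the entire orthogonal orbit of any one of them: this is the orthogonal indeterminacy of $\widehat{x}_{i}$. On the other hand, at the reduced level itself, $\delta_{i}=0$ makes the corresponding coordinate of $w_{0}=(0,\delta^{\prime})^{\prime}$ vanish while $d=\varepsilon u_{q+1}$ carries no $z$-component, so that both $L^{\ast}(w)=\|w-w_{0}\|^{2}$ and $Q^{\ast}(w)=w^{\prime}\Delta w+2d^{\prime}w-k^{\ast}$ are even in $z_{i}$; replacing $\widehat{z}_{i}$ by $-\widehat{z}_{i}$ therefore again solves $\mathbb{P}_{\omega^{\ast\ast}}$, giving the sign indeterminacy of $\widehat{z}_{i}$.

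There is no real obstacle here, the result being a bookkeeping consequence of Theorem \ref{RESULT: canonical form -- optimised}. The only point needing a word of care is to keep the two indeterminacies distinct — the orthogonal freedom of $\widehat{x}_{i}$ within a single $\mathbb{P}_{\omega^{\ast}}$-solution, versus the sign freedom of $\widehat{z}_{i}$ already present among $\mathbb{P}_{\omega^{\ast\ast}}$-solutions — and to verify the evenness of $L^{\ast}$ and $Q^{\ast}$ in $z_{i}$ that underlies the latter.
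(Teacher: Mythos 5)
Your proposal is correct and is exactly the argument the paper intends: the paper states this corollary as an immediate consequence of Theorem \ref{RESULT: canonical form -- optimised}, and your block-by-block read-off (pinned blocks for $\delta_{i}>0$, a sphere of radius $\lvert\widehat{z}_{i}\rvert$ for $\delta_{i}=0$) is precisely that derivation spelled out. Your extra verification that $L^{\ast}$ and $Q^{\ast}$ are even in $z_{i}$ when $\delta_{i}=0$, justifying the sign indeterminacy of $\widehat{z}_{i}$ at the dimension-reduced level, is a sound and welcome elaboration of what the paper leaves implicit.
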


\begin{example}
\label{EXAMPLE: min dist centre to sphere}A clear example of Corollary
\ref{RESULT: charc'n of when d-r c.f. solution determines multiple c.f. solns}%
\ is when the problem $\mathbb{P}_{\omega^{\ast}}$\ is to minimise the
(squared) distance to the unit sphere in $R^{n}$\ from its centre, the origin
$0_{n}$. The solution set $\widehat{X}_{\omega^{\ast}}$\ is, of course, the
unit sphere, while $\mathbb{P}_{\omega^{\ast\ast}}$\ is one-dimensional with
$\widehat{X}_{\omega^{\ast\ast}}=\{\pm1\}$.
\end{example}

\begin{remark}\label{REMARK 71}
If $B^{\ast}$\ is singular, but $b^{\ast}$ has zero component in its null
space -- that is, if $m_{0}>0$,\ but $\varepsilon=0$ -- the linear term in the
constraint vanishes so that the optimal $\widehat{y}=0$, reducing
$\mathbb{P}_{\omega^{\ast\ast}}$ to the corresponding $m_{0}=0$\ case. It
suffices, then, to solve dimension-reduced canonical forms which are \textit{regular}
in the sense defined next.
\end{remark}

\begin{definition}\label{DEFINITION 72} 
A dimension-reduced canonical form is called \textit{regular} if either
$m_{0}=0$ or $[m_{0}>0$ and $\varepsilon>0]$. We denote such forms by
$\mathbb{P}_{\overline{\omega}^{\ast\ast}}$.
\end{definition}
\noindent Since, trivially, $m_{0}=0\Rightarrow\varepsilon=0$, we have at once

\begin{remark}
For any regular dimension-reduced canonical form $\mathbb{P}_{\overline
{\omega}^{\ast\ast}}$:%
\[
\varepsilon>0\Leftrightarrow m_{0}>0\Leftrightarrow d\neq0\Leftrightarrow
\text{ the linear term in }Q^{\ast}(\cdot)\text{\ does NOT vanish,}%
\]
in which case, substituting out the constraint, $\mathbb{P}_{\overline{\omega
}^{\ast\ast}}$ can be rephrased as the unrestricted minimisation of a quartic
in $z$.
\end{remark}

\section{Solving $\mathbb{P}_{\overline{\omega}^{\ast\ast}}$\label{SECTION: solving d-r c.f.}}

We solve any regular dimension-reduced canonical form\ via a series of simple,
insightful, auxiliary results. The first shows that a Lagrangian approach
establishes sufficient conditions for this.

\subsection{Sufficient conditions}

The Lagrangian $\mathcal{L}:=L^{\ast}(w)-\lambda Q^{\ast}(w)$\ for
$\mathbb{P}_{\overline{\omega}^{\ast\ast}}$\ has normal equations%
\begin{equation}
\lbrack I-\lambda\Delta]w=(w_{0}+\lambda d)\label{DISPLAY: normal equations}%
\end{equation}
and Hessian $H:=2[I-\lambda\Delta]$.

\begin{definition}
We refer to $\Lambda:=\{\lambda:H$ has no negative eigenvalues$\}$ as the
\textit{admissible region} -- that is:
\begin{equation}
\Lambda:=(-\infty,\gamma_{1}^{-1}]\text{ if }\gamma_{q}>0\text{,\ while
}\Lambda:=[\gamma_{q}^{-1},\gamma_{1}^{-1}]\text{\ if }\gamma_{q}%
<0\label{DISPLAY: form of LAMBDA}%
\end{equation}
-- its interior, denoted by $\Lambda^{\circ}$, being where $H\succ O$. For each
$\lambda\in\Lambda$,%
\[
W_{N}(\lambda):=\{w\in R^{\underline{n}}:(\ref{DISPLAY: normal equations}%
)\text{ holds}\}\text{,}%
\]
so that $W(\lambda):=W\cap W_{N}(\lambda)$ denotes the set of feasible
vectors, if any, obeying the normal equations.
\end{definition}

\begin{lemma}
\label{RESULT: P_omega**: suff't. conditions to solve}For any $\lambda
\in\Lambda$, $W(\lambda)\neq\emptyset\Rightarrow W(\lambda)=\widehat{W}$.
Indeed, if $w_{N}\in W(\lambda)$,
\[
\underline{L}^{\ast}=\left\Vert w_{N}-w_{0}\right\Vert ^{2}\text{ and
\ }\widehat{W}=\{w\in W:H(w-w_{N})=0\}=W(\lambda).
\]
In particular, if $\lambda\in\Lambda^{\circ}$, $w_{N}$\ \textbf{uniquely}
solves $\mathbb{P}_{\overline{\omega}^{\ast\ast}}$.
\end{lemma}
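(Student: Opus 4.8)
The plan is to run a standard Lagrangian sufficiency argument, exploiting the fact that on the feasible set the Lagrangian and the loss coincide. Writing $\mathcal{L}(w)=L^{\ast}(w)-\lambda Q^{\ast}(w)$, we have $\mathcal{L}(w)=L^{\ast}(w)$ for every $w\in W$, since $Q^{\ast}(w)=0$ there; so minimising $L^{\ast}$ over $W$ is the same as minimising $\mathcal{L}$ over $W$. The gradient of $\mathcal{L}$ is $2[I-\lambda\Delta]w-2(w_{0}+\lambda d)$, which vanishes exactly at the normal equations (\ref{DISPLAY: normal equations}), while its constant Hessian is $H=2[I-\lambda\Delta]$.

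First I would record the exact second-order expansion about $w_{N}$. Because $\mathcal{L}$ is quadratic with Hessian $H$ and $\nabla\mathcal{L}(w_{N})=0$ (as $w_{N}\in W_{N}(\lambda)$ obeys the normal equations),
\[
\mathcal{L}(w)=\mathcal{L}(w_{N})+\tfrac{1}{2}(w-w_{N})^{\prime}H(w-w_{N})\quad\text{for all }w\in R^{\underline{n}}.
\]
Since $\lambda\in\Lambda$ forces $H\succeq O$ by definition of the admissible region, this identity shows $w_{N}$ globally minimises $\mathcal{L}$ over $R^{\underline{n}}$. Hence for every $w\in W$, $L^{\ast}(w)=\mathcal{L}(w)\geq\mathcal{L}(w_{N})=L^{\ast}(w_{N})=\left\Vert w_{N}-w_{0}\right\Vert ^{2}$, giving $\underline{L}^{\ast}=\left\Vert w_{N}-w_{0}\right\Vert ^{2}$ and $w_{N}\in\widehat{W}$.

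For the characterisation of $\widehat{W}$, I would note that a feasible $w$ is optimal if and only if $L^{\ast}(w)=\underline{L}^{\ast}$, i.e.\ $\mathcal{L}(w)=\mathcal{L}(w_{N})$, which by the displayed expansion holds if and only if $(w-w_{N})^{\prime}H(w-w_{N})=0$. The single linear-algebra fact I would invoke is that, for $H\succeq O$, $v^{\prime}Hv=0\Leftrightarrow Hv=0$ (via $H=H^{1/2}H^{1/2}$). This yields $\widehat{W}=\{w\in W:H(w-w_{N})=0\}$. Finally, subtracting the normal equations satisfied by $w_{N}$ from those defining $W_{N}(\lambda)$ shows that, for any $w$, membership $w\in W_{N}(\lambda)$ is equivalent to $[I-\lambda\Delta](w-w_{N})=0$, hence to $H(w-w_{N})=0$; so this set is precisely $W\cap W_{N}(\lambda)=W(\lambda)$. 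The ``in particular'' claim is then immediate: $\lambda\in\Lambda^{\circ}$ means $H\succ O$, so $H(w-w_{N})=0$ forces $w=w_{N}$ and $\widehat{W}=\{w_{N}\}$.

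I do not anticipate a genuine obstacle here; the substance is the convexity of $\mathcal{L}$ on the admissible region together with the kernel characterisation of a positive semi-definite quadratic form. The only point requiring care is keeping the three descriptions of $\widehat{W}$ aligned --- as the optimal level set of $\mathcal{L}$, as the translate $\{w\in W:H(w-w_{N})=0\}$ of $\ker H$, and as $W(\lambda)$ --- and this alignment is exactly what the second-order expansion and the normal-equation subtraction deliver.
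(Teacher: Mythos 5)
Your proposal is correct and takes essentially the same route as the paper: your exact second-order expansion of the Lagrangian about $w_{N}$, restricted to $W$ where $\mathcal{L}=L^{\ast}$, is precisely the paper's one-line ``Cosine Law'' identity $L^{\ast}(w)-L^{\ast}(w_{N})=(w-w_{N})^{\prime}[I-\lambda\Delta](w-w_{N})\geq 0$ for all $w\in W$. You simply make explicit the steps the paper leaves to the reader, namely the kernel characterisation $v^{\prime}Hv=0\Leftrightarrow Hv=0$ for positive semi-definite $H$ and the subtraction of the normal equations identifying $\{w\in W:H(w-w_{N})=0\}$ with $W(\lambda)$.
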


\begin{proof}
It suffices to note that, by the Cosine Law:%
\[
\forall w\in W,\;L^{\ast}(w)-L^{\ast}(w_{N})=(w-w_{N})^{\prime}[I-\lambda
\Delta](w-w_{N})\geq0\text{.}%
\]

\end{proof}

In view of Lemma \ref{RESULT: P_omega**: suff't. conditions to solve},
$\mathbb{P}_{\overline{\omega}^{\ast\ast}}$\ is completely resolved if we can
find, in explicit form, a nonempty set $W(\lambda)$ for any $\lambda\in
\Lambda$.

\subsection{Feasible solutions to the normal equations\label{SECTION: Feasible solutions to the normal equations}}

Characterising when feasible solutions to the normal equations exist
requires\ extensions of terminology established in Section
\ref{SECTION: A p.d.: optimised canonical form}. Continuing its unified
presentation of $\underline{n}=q$\ and $\underline{n}=q+1$, terms involving
any of $y$, $\widehat{y}$\ or, introduced below, $\widehat{y}_{1}$ or
$\widehat{y}_{q}$ are absent by convention if $m_{0}=0$.

We distinguish between interior and boundary points of $\Lambda$. Accordingly,
recalling (\ref{DISPLAY: form of LAMBDA}), there are either two or three cases
to consider according as $\gamma_{q}>0$\ or $\gamma_{q}<0$. We call these
Cases A ($\lambda\in\Lambda^{\circ}$), B$_{1}$ ($\lambda=\gamma_{1}^{-1}%
$)\ and B$_{q}$ ($\lambda=\gamma_{q}^{-1}$), this last arising when and
\textit{only} when $\gamma_{q}<0$. By convention this last condition -- that
the constraint (\ref{DISPLAY: dimn-reduced canonical form}) be, at least in
part, hyperbolic -- is implicit whenever any of the terms defined only in Case
B$_{q}$ is mentioned. In particular, this applies to $\widehat{y}_{q}$,
$\widehat{z}_{(q)}$, $\Gamma_{(q)}^{\ast}$ and$\ f_{q}$,\ defined next.

\begin{definition}\label{DEFINITION: fcf}
Let $\mathbb{P}_{\overline{\omega}^{\ast\ast}}$ be a regular dimension-reduced
canonical form and let $\lambda\in\Lambda$.

Case A $(\lambda\in\Lambda^{\circ}):$ $w(\lambda):=[I-\lambda\Delta
]^{-1}(w_{0}+\lambda d)$ denotes the unique solution to the normal equations,
while the function $f:\Lambda^{\circ}\rightarrow R$\ is defined by
$f(\lambda):=Q^{\ast}(w(\lambda))$. That is, for all $\lambda\in\Lambda
^{\circ}$:%
\begin{equation}\label{EQUATION: fcf}
f(\lambda):=%
{\textstyle\sum\nolimits_{i=1}^{q}}
(1-\lambda\gamma_{i})^{-2}\gamma_{i}\delta_{i}^{2}+2\varepsilon^{2}%
\lambda-k^{\ast}\text{.}%
\end{equation}
$\underline{f}$ and $\overline{f}$ respectively denote the infimum and
supremum\ of $\{f(\lambda):\lambda\in\Lambda^{\circ}\}$.\smallskip

Case B$_{1}$ $(\lambda=\gamma_{1}^{-1}):$ $\widehat{y}_{1}:=\varepsilon
\gamma_{1}^{-1}$ $(m_{0}>0)$, $\widehat{z}_{(1)}\in R^{q-1}$\ has general
element $[1-(\gamma_{i}/\gamma_{1})]^{-1}\delta_{i}$ $(i=2,...,q)$, while
$f_{1}:=\widehat{z}_{(1)}^{\prime}\Gamma_{(1)}^{\ast}\widehat{z}%
_{(1)}+2\varepsilon\widehat{y}_{1}-k^{\ast}$ where $\Gamma^{\ast}%
\equiv\operatorname*{diag}(\gamma_{1},\Gamma_{(1)}^{\ast})$.\smallskip

Case B$_{q}$ $(\lambda=\gamma_{q}^{-1}):$ when and \textit{only} when
$\gamma_{q}<0$: $\widehat{y}_{q}:=\varepsilon\gamma_{q}^{-1}$ $(m_{0}>0)$,
$\widehat{z}_{(q)}\in R^{q-1}$\ has general element $[1-(\gamma_{i}/\gamma
_{q})]^{-1}\delta_{i}$ $(i=1,...,q-1)$, while $f_{q}:=\widehat{z}%
_{(q)}^{\prime}\Gamma_{(q)}^{\ast}\widehat{z}_{(q)}+2\varepsilon
\widehat{y}_{q}-k^{\ast}$ where $\Gamma^{\ast}\equiv\operatorname*{diag}%
(\Gamma_{(q),}^{\ast}\gamma_{q})$.
\end{definition}

\begin{remark}
The constraint $f(\lambda)=0$\ is \textit{polynomial} in $\lambda$.
\end{remark}
Key properties of $f:\Lambda^{\circ}\rightarrow R$\ are summarised in

\begin{proposition}
\label{RESULT: key properties of f -- esp. f_ and f^_}For any $\mathbb{P}%
_{\overline{\omega}^{\ast\ast}}:$\bigskip\newline if $\delta=0_{q}$ and
$\varepsilon=0$, $f(\lambda)=-k^{\ast}$ for all $\lambda\in\Lambda^{\circ}$,
so that $\underline{f}=-k^{\ast}=\overline{f}$; \medskip\newline else, if
either $\delta$\ or $\varepsilon$\ does not vanish, $f$ is continuous and
strictly increasing, $\overline{f}>\underline{f}$ being given by$:$%
\[%
\begin{tabular}[c]{rcllrrl}%
$\overline{f}=$ & \negthinspace\negthinspace\negthinspace\negthinspace
\negthinspace$f_{1}$ & \negthinspace\negthinspace\negthinspace\negthinspace
\negthinspace$\text{or }+\infty$ & \negthinspace\negthinspace\negthinspace
\negthinspace\negthinspace$\text{according as}$ & \negthinspace\negthinspace
\negthinspace\negthinspace\negthinspace$\delta_{1}=0\text{ or}$ &
\negthinspace\negthinspace\negthinspace\negthinspace\negthinspace$\delta
_{1}>0\text{,}$ & \negthinspace\negthinspace\negthinspace\negthinspace
$\text{while}$\\
$\underline{f}=$ & \negthinspace\negthinspace\negthinspace\negthinspace
\negthinspace$f_{q}$ & \negthinspace\negthinspace\negthinspace\negthinspace
\negthinspace$\text{or }-\infty$ & \negthinspace\negthinspace\negthinspace
\negthinspace\negthinspace$\text{according as}$ & \negthinspace\negthinspace
\negthinspace\negthinspace\negthinspace$\delta_{q}=0\text{\ or}$ &
\negthinspace\negthinspace\negthinspace\negthinspace\negthinspace$\delta
_{q}>0\text{,}$ & \negthinspace\negthinspace\negthinspace\negthinspace
$(\gamma_{q}<0),$\\
$\underline{f}=$ & \negthinspace\negthinspace\negthinspace\negthinspace
\negthinspace$-k^{\ast}$ & \negthinspace\negthinspace\negthinspace
\negthinspace\negthinspace$\text{or }-\infty$ & \negthinspace\negthinspace
\negthinspace\negthinspace\negthinspace$\text{according as}$ & \negthinspace
\negthinspace\negthinspace\negthinspace\negthinspace$\varepsilon=0\text{\ or}$
& \negthinspace\negthinspace\negthinspace\negthinspace\negthinspace
$\varepsilon>0\text{,}$ & \negthinspace\negthinspace\negthinspace
\negthinspace$(\gamma_{q}>0).$%
\end{tabular}
\]

\end{proposition}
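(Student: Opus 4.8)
The plan is to treat $f$ as a scalar function on the open interval $\Lambda^{\circ}$ and to read off $\underline{f}$ and $\overline{f}$ from its monotonicity together with its behaviour at the two endpoints of that interval. First I would make $\Lambda^{\circ}$ explicit. Since $H=2[I-\lambda\Delta]$ and $\Delta$ has eigenvalues drawn from $\{0,\gamma_{1},\ldots,\gamma_{q}\}$, the condition $H\succ O$ defining $\Lambda^{\circ}$ is exactly that $1-\lambda\gamma_{i}>0$ for every $i$. As $\gamma_{1}>\cdots>\gamma_{q}$ with $\gamma_{1}>0$, this yields $\Lambda^{\circ}=(-\infty,\gamma_{1}^{-1})$ when $\gamma_{q}>0$ and $\Lambda^{\circ}=(\gamma_{q}^{-1},\gamma_{1}^{-1})$ when $\gamma_{q}<0$, recovering the open interior of (\ref{DISPLAY: form of LAMBDA}). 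On $\Lambda^{\circ}$ each denominator $1-\lambda\gamma_{i}$ is strictly positive, so (\ref{EQUATION: fcf}) is a rational function with no pole there and $f$ is continuous (indeed smooth). The degenerate case $\delta=0_{q}$, $\varepsilon=0$ is then immediate: every summand and the linear term vanish, leaving $f\equiv-k^{\ast}$, whence $\underline{f}=-k^{\ast}=\overline{f}$.

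For the main case I would differentiate (\ref{EQUATION: fcf}), obtaining
\[
f'(\lambda)=\sum_{i=1}^{q}2\gamma_{i}^{2}\delta_{i}^{2}(1-\lambda\gamma_{i})^{-3}+2\varepsilon^{2}.
\]
On $\Lambda^{\circ}$ each factor $(1-\lambda\gamma_{i})^{-3}$ is positive, so every term is nonnegative; and since, by hypothesis, some $\delta_{i}>0$ or $\varepsilon>0$, at least one term is strictly positive. Hence $f'>0$ throughout $\Lambda^{\circ}$, so $f$ is strictly increasing; as $\Lambda^{\circ}$ is a nondegenerate interval, this already forces $\overline{f}>\underline{f}$. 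Continuity and strict monotonicity then identify $\overline{f}$ and $\underline{f}$ with the one-sided limits of $f$ at, respectively, the right and left endpoints of $\Lambda^{\circ}$, which I would evaluate term by term.

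At the right endpoint, $\lambda\uparrow\gamma_{1}^{-1}$, the $i=1$ summand $(1-\lambda\gamma_{1})^{-2}\gamma_{1}\delta_{1}^{2}$ tends to $+\infty$ when $\delta_{1}>0$ (as $\gamma_{1}>0$), giving $\overline{f}=+\infty$; when $\delta_{1}=0$ that term is absent and the finite limit is recognised as $f_{1}$ of Definition \ref{DEFINITION: fcf} upon writing $1-\gamma_{1}^{-1}\gamma_{i}=1-\gamma_{i}/\gamma_{1}$ and collecting $\widehat{z}_{(1)}^{\prime}\Gamma_{(1)}^{\ast}\widehat{z}_{(1)}$ and $2\varepsilon\widehat{y}_{1}$. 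The left endpoint splits on the sign of $\gamma_{q}$. If $\gamma_{q}<0$, then $\lambda\downarrow\gamma_{q}^{-1}$ and the $i=q$ summand blows up to $-\infty$ when $\delta_{q}>0$ (its coefficient $\gamma_{q}\delta_{q}^{2}$ being negative), giving $\underline{f}=-\infty$; when $\delta_{q}=0$ the finite limit matches $f_{q}$ by the same rewriting. If instead $\gamma_{q}>0$, then $\lambda\to-\infty$, every summand tends to $0^{+}$ while $2\varepsilon^{2}\lambda\to-\infty$ when $\varepsilon>0$ (so $\underline{f}=-\infty$) and is identically $0$ when $\varepsilon=0$ (so $\underline{f}=-k^{\ast}$). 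This reproduces the three stated lines.

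The only step requiring care is matching the finite endpoint limits with the quantities $f_{1}$ and $f_{q}$ of Definition \ref{DEFINITION: fcf}; this is a direct algebraic identification rather than a genuine obstacle, and everything else is elementary one-variable calculus, so I anticipate no serious difficulty.
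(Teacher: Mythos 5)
Your proposal is correct and is precisely the ``straightforward proof'' the paper omits for this proposition: on $\Lambda^{\circ}$ each factor $1-\lambda\gamma_{i}$ is positive, $f'(\lambda)=\sum_{i=1}^{q}2\gamma_{i}^{2}\delta_{i}^{2}(1-\lambda\gamma_{i})^{-3}+2\varepsilon^{2}>0$ in the non-degenerate case, and the one-sided endpoint limits evaluate to $f_{1}$, $f_{q}$ or $-k^{\ast}$ exactly as you identify them against Definition \ref{DEFINITION: fcf}. All case distinctions ($\gamma_{q}>0$ versus $\gamma_{q}<0$, $\delta_{1}$, $\delta_{q}$, $\varepsilon$ zero or positive) are handled correctly, so there is nothing to add.
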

\noindent Consider now Case A. Regularity of $\mathbb{P}_{\overline{\omega
}^{\ast\ast}}$ and Proposition
\ref{RESULT: key properties of f -- esp. f_ and f^_}\ give at once

\begin{lemma}
\label{RESULT: charc'n of when w(.) does not depend upon lambda}$w(\lambda)$
does not depend upon $\lambda\Leftrightarrow\lbrack\delta=0_{q}$ and
$\varepsilon=0]\Leftrightarrow w(\lambda)=0_{q}$ for every $\lambda\in
\Lambda^{\circ}\Leftrightarrow\underline{f}=-k^{\ast}=\overline{f}$.
\end{lemma}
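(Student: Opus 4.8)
The plan is to reduce everything to a single explicit, coordinatewise description of $w(\lambda)$ and then read off each of the four conditions from it. Recalling from Definition \ref{DEFINITION: fcf} that, for $\lambda\in\Lambda^{\circ}$, $w(\lambda)=[I-\lambda\Delta]^{-1}(w_{0}+\lambda d)$, I would first observe that $\Delta$ is diagonal, so the inverse acts coordinatewise. Using $\Delta=\operatorname*{diag}(0,\Gamma^{\ast})$, $w_{0}=(0,\delta^{\prime})^{\prime}$ and $d=\varepsilon u_{q+1}$ in the singular case (and $\Delta=\Gamma^{\ast}$, $w_{0}=\delta$, $d=0_{q}$ otherwise), this yields that the $z_{i}$-coordinate of $w(\lambda)$ equals $(1-\lambda\gamma_{i})^{-1}\delta_{i}$ for $i=1,\ldots,q$, while, when $m_{0}>0$, its $y$-coordinate equals $\lambda\varepsilon$ (the null-space diagonal entry of $[I-\lambda\Delta]$ being $1$, and $d$ contributing $\varepsilon$ only there). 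On $\Lambda^{\circ}$, where $H\succ O$ forces each $1-\lambda\gamma_{i}>0$, these coordinates are all well defined. This one computation does essentially all the work.

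With this in hand I would close the loop among the first three statements. The implications $[\delta=0_{q}$ and $\varepsilon=0]\Rightarrow w(\lambda)\equiv 0_{q}\Rightarrow w(\lambda)$ independent of $\lambda$ are immediate, since each coordinate above then vanishes identically; here regularity (Definition \ref{DEFINITION 72}) enters, because $\varepsilon=0$ forces $m_{0}=0$, so that $w$ genuinely lives in $R^{q}$ and the asserted value is indeed $0_{q}$. For the converse, suppose $w(\lambda)$ does not depend on $\lambda$. Over the nonempty open interval $\Lambda^{\circ}$, each $(1-\lambda\gamma_{i})^{-1}$ has derivative $\gamma_{i}(1-\lambda\gamma_{i})^{-2}\neq0$ (as $\gamma_{i}\neq0$), hence is non-constant, so the $z_{i}$-coordinate is constant only if $\delta_{i}=0$; thus $\delta=0_{q}$. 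For $\varepsilon$: if $m_{0}=0$ then $\varepsilon=0$ trivially, whereas if $m_{0}>0$ regularity gives $\varepsilon>0$, making the $y$-coordinate $\lambda\varepsilon$ non-constant, a contradiction; so $\varepsilon=0$ in either case, establishing $[\delta=0_{q}$ and $\varepsilon=0]$.

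Finally, the equivalence with $\underline{f}=-k^{\ast}=\overline{f}$ I would obtain directly from Proposition \ref{RESULT: key properties of f -- esp. f_ and f^_}: its first branch gives $f\equiv-k^{\ast}$, hence $\underline{f}=-k^{\ast}=\overline{f}$, exactly when $[\delta=0_{q}$ and $\varepsilon=0]$, while its second branch asserts $\overline{f}>\underline{f}$ whenever either $\delta$ or $\varepsilon$ fails to vanish, ruling out $\underline{f}=\overline{f}$ in that case; read as a biconditional this supplies the remaining equivalence. I do not expect a genuine obstacle here, the proof being coordinatewise bookkeeping backed by the cited proposition; the only point demanding care is the appeal to regularity to exclude the case $[m_{0}>0,\ \varepsilon=0]$, which both validates the ``no $y$-coordinate'' reasoning and guarantees that the limiting constant vector is $0_{q}$ rather than $0_{q+1}$.
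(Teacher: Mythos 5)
Your proof is correct and follows essentially the paper's own route: the paper gives no detailed argument, stating only that the lemma follows ``at once'' from regularity (Definition \ref{DEFINITION 72}) and Proposition \ref{RESULT: key properties of f -- esp. f_ and f^_}, and your coordinatewise formula $w(\lambda)=\bigl(\lambda\varepsilon,\,(1-\lambda\gamma_{i})^{-1}\delta_{i}\bigr)$ together with the appeal to that proposition is exactly the omitted routine verification. In particular, you correctly isolate the one delicate point -- that regularity excludes $[m_{0}>0,\ \varepsilon=0]$, so $\varepsilon=0$ forces $w(\lambda)=0_{q}$ rather than $0_{q+1}$ -- which is precisely why the paper cites regularity.
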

\noindent Again, using Lemma
\ref{RESULT: P_omega**: suff't. conditions to solve}, we have at once

\begin{lemma}
\label{RESULT: Case A: form of W(lambda)}Let $\lambda\in\Lambda^{\circ}$.
Then,
\[
W(\lambda)=\left\{
\begin{tabular}
[c]{cl}%
$\{w(\lambda)\}$ & \negthinspace\negthinspace\negthinspace if $f(\lambda)=0$\\
$\emptyset$ & \negthinspace\negthinspace\negthinspace if $f(\lambda)\neq0$%
\end{tabular}
\right.
\]
so that: $f(\lambda_{1})=f(\lambda_{2})=0\Rightarrow W(\lambda_{1}%
)=W(\lambda_{2})=\widehat{W}\Rightarrow w(\lambda_{1})=w(\lambda_{2}). $
\end{lemma}
\noindent In view of Lemma \ref{RESULT: Case A: form of W(lambda)}, we may
define $\widehat{w}_{\circ}$\ as the common value of $w(\lambda)$\ among all
solutions to $f(\lambda)=0$, when at least one such exists. Putting $W_{\circ
}:=\cup_{\lambda\in\Lambda^{\circ}}W(\lambda)$, Lemmas
\ref{RESULT: P_omega**: suff't. conditions to solve}\ and
\ref{RESULT: Case A: form of W(lambda)} now give

\begin{lemma}
\label{RESULT: W_o}$W_{\circ}=\left\{
\begin{array}
[c]{ll}%
\widehat{W}=\{\widehat{w}_{\circ}\} & \text{if }\exists\;\lambda\text{\ with
}f(\lambda)=0\\
\emptyset & \text{else.}%
\end{array}
\right.  $
\end{lemma}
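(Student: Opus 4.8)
The plan is to read the result straight off the two preceding lemmas together with the definition of $\widehat{w}_{\circ}$, splitting the argument according to whether or not $f$ vanishes somewhere on $\Lambda^{\circ}$. No new computation is needed; everything reduces to bookkeeping on the union $W_{\circ}=\cup_{\lambda\in\Lambda^{\circ}}W(\lambda)$.

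First I would dispose of the \emph{else} case. Suppose $f(\lambda)\neq 0$ for every $\lambda\in\Lambda^{\circ}$. Then Lemma \ref{RESULT: Case A: form of W(lambda)} gives $W(\lambda)=\emptyset$ for each such $\lambda$, so their union $W_{\circ}$ is empty, exactly as claimed. This half is immediate.

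Next I would treat the existence case. Suppose some $\lambda_{0}\in\Lambda^{\circ}$ has $f(\lambda_{0})=0$. By Lemma \ref{RESULT: Case A: form of W(lambda)}, $W(\lambda_{0})=\{w(\lambda_{0})\}\neq\emptyset$, and by the definition of $\widehat{w}_{\circ}$ as the common value of $w(\lambda)$ taken over all zeros of $f$, we have $w(\lambda_{0})=\widehat{w}_{\circ}$. Since $\lambda_{0}\in\Lambda^{\circ}$, a fortiori $\lambda_{0}\in\Lambda$, so Lemma \ref{RESULT: P_omega**: suff't. conditions to solve} applies to the nonempty $W(\lambda_{0})$ and yields $W(\lambda_{0})=\widehat{W}$; hence $\widehat{W}=\{\widehat{w}_{\circ}\}$. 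It then remains only to assemble the union: for any $\lambda\in\Lambda^{\circ}$ with $f(\lambda)=0$ the same lemma gives $W(\lambda)=\{w(\lambda)\}=\{\widehat{w}_{\circ}\}$, while $W(\lambda)=\emptyset$ whenever $f(\lambda)\neq 0$; therefore $W_{\circ}=\{\widehat{w}_{\circ}\}=\widehat{W}$.

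I expect no genuine obstacle here, since the substance is entirely carried by Lemmas \ref{RESULT: P_omega**: suff't. conditions to solve} and \ref{RESULT: Case A: form of W(lambda)}. The only point needing care is the collapse of the union, namely verifying that \emph{every} nonempty $W(\lambda)$ is the \emph{same} singleton $\{\widehat{w}_{\circ}\}$. This is precisely the final implication of Lemma \ref{RESULT: Case A: form of W(lambda)}, which forces $w(\lambda_{1})=w(\lambda_{2})$ whenever $f(\lambda_{1})=f(\lambda_{2})=0$ and thereby legitimises the definition of $\widehat{w}_{\circ}$ that the statement presupposes.
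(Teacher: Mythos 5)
Your proposal is correct and follows exactly the route the paper intends: the paper omits the proof as straightforward, stating only that the lemma follows from Lemmas \ref{RESULT: P_omega**: suff't. conditions to solve} and \ref{RESULT: Case A: form of W(lambda)} together with the definition of $\widehat{w}_{\circ}$, and your write-up simply fills in that bookkeeping. In particular, you correctly identify the one point needing care -- that every nonempty $W(\lambda)$ is the same singleton, guaranteed by the final implication of Lemma \ref{RESULT: Case A: form of W(lambda)} -- so nothing is missing.
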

\noindent Combining the above auxiliary results makes $\widehat{w}_{\circ}%
$\ explicit and, with it, the following summary of Case A.

\begin{proposition}
\label{RESULT: W_0: when nonempty? what content?}For any $\mathbb{P}%
_{\overline{\omega}^{\ast\ast}}$, there are three possibilities$:$%
\smallskip\newline(a) If $\underline{f}<0<\overline{f}$, $f(\lambda)=0$ has a
unique solution $\widehat{\lambda}$ and $\widehat{W}=W_{\circ}%
=\{w(\widehat{\lambda})\}$.\smallskip\newline(b) If $\underline{f}%
=0=\overline{f}$, $f(\lambda)=0$ for every $\lambda\in\Lambda^{\circ}$ and
$\widehat{W}=W_{\circ}=\{0_{q}\}$.\smallskip\newline(c) In all other cases,
$f(\lambda)=0$ has no solutions and $W_{\circ}=\emptyset$.
\end{proposition}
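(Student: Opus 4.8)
The plan is to assemble the proposition directly from the auxiliary results already in hand, the whole argument turning on the behaviour of the single scalar function $f:\Lambda^{\circ}\rightarrow R$. First I would invoke Proposition~\ref{RESULT: key properties of f -- esp. f_ and f^_} to split into its two regimes. In the \emph{degenerate} regime, $\delta=0_{q}$ and $\varepsilon=0$, so $f(\lambda)\equiv-k^{\ast}$ is constant and $\underline{f}=-k^{\ast}=\overline{f}$; in the \emph{generic} regime, at least one of $\delta$, $\varepsilon$ is nonzero, so $f$ is continuous and strictly increasing with $\underline{f}<\overline{f}$.

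Next I would determine, in each regime, exactly when $f(\lambda)=0$ admits a solution. In the generic regime $f$ is a continuous, strictly increasing function on the open interval $\Lambda^{\circ}$, so its image is precisely the open interval $(\underline{f},\overline{f})$; hence $f(\lambda)=0$ has a solution if and only if $\underline{f}<0<\overline{f}$, and then, by strict monotonicity, this solution $\widehat{\lambda}$ is unique. This is case~(a): Lemmas~\ref{RESULT: Case A: form of W(lambda)} and~\ref{RESULT: W_o} then give $\widehat{W}=W_{\circ}=\{w(\widehat{\lambda})\}$ at once. In the degenerate regime $f\equiv-k^{\ast}$, so $f$ vanishes identically exactly when $k^{\ast}=0$, equivalently when $\underline{f}=0=\overline{f}$; this is case~(b), where Lemma~\ref{RESULT: charc'n of when w(.) does not depend upon lambda} identifies the common value $\widehat{w}_{\circ}=0_{q}$ and Lemma~\ref{RESULT: W_o} yields $\widehat{W}=W_{\circ}=\{0_{q}\}$.

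Finally I would collect the remaining configurations --- namely a strictly increasing $f$ with $0\notin(\underline{f},\overline{f})$, together with a constant $f$ having $k^{\ast}\neq0$ --- and observe that these are exactly the situations falling under neither (a) nor (b), i.e. ``all other cases''. In each of them $f(\lambda)=0$ has no solution, so Lemma~\ref{RESULT: W_o} delivers $W_{\circ}=\emptyset$, which is case~(c). Since the three conditions on the pair $(\underline{f},\overline{f})$ --- straddling zero, jointly equal to zero, and everything else --- are manifestly mutually exclusive and exhaustive, this completes the trichotomy.

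The step I expect to carry the real (if modest) weight is the image calculation in the generic regime: one must note that the infimum $\underline{f}$ and supremum $\overline{f}$ need \emph{not} be attained on the open interval $\Lambda^{\circ}$ --- indeed, by Proposition~\ref{RESULT: key properties of f -- esp. f_ and f^_} either may equal $\pm\infty$ --- so the correct existence criterion is the \emph{strict} double inequality $\underline{f}<0<\overline{f}$ rather than a weak one. Granting this, the intermediate value theorem supplies existence and strict monotonicity supplies uniqueness, and no further computation is needed.
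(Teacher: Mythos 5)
Your proof is correct and follows essentially the route the paper intends: the proposition is stated there without an explicit proof, being framed as the direct combination of Proposition~\ref{RESULT: key properties of f -- esp. f_ and f^_} with Lemmas~\ref{RESULT: charc'n of when w(.) does not depend upon lambda}--\ref{RESULT: W_o}, which is exactly your assembly. Your added observation that $f$, being continuous and strictly increasing on the \emph{open} interval $\Lambda^{\circ}$, has image exactly $(\underline{f},\overline{f})$ with neither bound attained --- so that the existence criterion must be the strict inequality $\underline{f}<0<\overline{f}$ --- correctly supplies the one step the paper leaves implicit.
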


\begin{remark}
When it exists, computing $\widehat{\lambda}$ is straightforward
(see Albers et al. \cite{AlbersJMVA1,AlbersJMVA2}).
\end{remark}
In view of Proposition
\ref{RESULT: W_0: when nonempty? what content?}, we refer to $W_{\circ}$\ as a
\textit{potential Lagrangian solution set}, this potential being realised if
and only if it is non-empty. Turning now to the boundary cases, there are two
other potential Lagrangian solution sets $W_{1}:=W(\gamma_{1}^{-1})$ and
$W_{q}:=W(\gamma_{q}^{-1})$, this second definition being made when and
\textit{only} when $\gamma_{q}<0$. The hessian $H$\ being singular here, the
normal equations (\ref{DISPLAY: normal equations}) have either no solution, or
a unique solution for all but one member of $w$, which they leave
unconstrained. We have

\begin{proposition}
\label{RESULT: W_1 and W_q: when nonempty? what content?}For any
$\mathbb{P}_{\overline{\omega}^{\ast\ast}}:$\bigskip\newline(1) Case B$_{1}$
$(\lambda=\gamma_{1}^{-1}):$ $W_{1}\neq\emptyset\Leftrightarrow\lbrack
\delta_{1}=0$ and $f_{1}\leq0]$, in which case:%
\[
\widehat{W}=W_{1}=\{(\widehat{y}_{1},z_{1},\widehat{z}_{(1)}^{\,\prime
})^{\prime}:z_{1}^{2}=(-f_{1})/\gamma_{1}\}\text{.}%
\]
(2) Case B$_{q}$ $(\lambda=\gamma_{q}^{-1}<0):W_{q}\neq\emptyset
\Leftrightarrow\lbrack\delta_{q}=0$ and $f_{q}\geq0]$, in which case:%
\[
\widehat{W}=W_{q}=\{(\widehat{y}_{q},\widehat{z}_{(q)}^{\,\prime}%
,z_{q})^{\prime}:z_{q}^{2}=f_{q}/(-\gamma_{q})\}.
\]

\end{proposition}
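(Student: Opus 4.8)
The plan is to analyze Cases $\mathrm{B}_1$ and $\mathrm{B}_q$ directly via the singular normal equations, in each case determining exactly when a feasible solution obeying the normal equations exists, and then invoking Lemma \ref{RESULT: P_omega**: suff't. conditions to solve} to conclude that any such solution set equals $\widehat{W}$. The two parts are mirror images of one another (the roles of $\gamma_1$ and $\gamma_q$ are interchanged, with a sign flip reflecting $\gamma_q<0$), so I would prove (1) in detail and note that (2) follows identically.

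**First I would** treat Case $\mathrm{B}_1$, setting $\lambda=\gamma_1^{-1}$. The Hessian $H=2[I-\lambda\Delta]$ is then singular, with a one-dimensional kernel corresponding to the $z_1$-coordinate (since $1-\lambda\gamma_1=0$, while $1-\lambda\gamma_i\neq0$ for $i\neq 1$, recalling the $\gamma_i$ are distinct, and the $y$-row, present only when $m_0>0$, contributes $1$). Writing out the normal equations \eqref{DISPLAY: normal equations} row by row, every coordinate except $z_1$ is uniquely determined: the $y$-component gives $\widehat{y}_1=\varepsilon\gamma_1^{-1}$, and for $i=2,\dots,q$ the $z_i$-component gives $[1-(\gamma_i/\gamma_1)]^{-1}\delta_i$, matching $\widehat{z}_{(1)}$ from Definition \ref{DEFINITION: fcf}. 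The $z_1$-row reads $0\cdot z_1=\delta_1$, so $W_N(\gamma_1^{-1})$ is nonempty if and only if $\delta_1=0$, in which case $z_1$ is left entirely free. This establishes the $\delta_1=0$ half of the stated equivalence.

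**Next I would** impose feasibility, intersecting $W_N(\gamma_1^{-1})$ with $W$. Substituting the determined components into the constraint $Q^*(w)=w'\Delta w+2d'w-k^*=0$, the contribution of the free coordinate is precisely $\gamma_1 z_1^2$, while the remaining terms collect into $\widehat{z}_{(1)}'\Gamma_{(1)}^*\widehat{z}_{(1)}+2\varepsilon\widehat{y}_1-k^*=f_1$ (the cross term vanishes because $d$ has support only on the $y$-coordinate). Hence the constraint becomes $\gamma_1 z_1^2+f_1=0$, i.e.\ $z_1^2=(-f_1)/\gamma_1$. Since $\gamma_1>0$, this has a (real) solution for $z_1$ if and only if $f_1\le 0$, yielding exactly the condition $[\delta_1=0$ and $f_1\le 0]$ and the displayed form of $W_1$. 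Feasibility together with the normal equations then gives $W_1=W(\gamma_1^{-1})\neq\emptyset$, whence Lemma \ref{RESULT: P_omega**: suff't. conditions to solve} forces $\widehat{W}=W_1$.

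**The main obstacle** I anticipate is purely bookkeeping: correctly handling the $m_0=0$ versus $m_0>0$ cases under the stated convention that $y$-terms are absent when $m_0=0$, and verifying that the cross term $2d'w$ really contributes only through $\widehat{y}_1$ and produces no $z_1$-dependence (so that the constraint stays a pure quadratic in the free variable). For part (2), the identical argument at $\lambda=\gamma_q^{-1}$ frees the $z_q$-coordinate and yields $\gamma_q z_q^2+f_q=0$; because now $\gamma_q<0$, solvability reverses to $f_q\ge 0$, giving $z_q^2=f_q/(-\gamma_q)$ and the claimed $W_q$. The only subtlety is remembering that Case $\mathrm{B}_q$ is defined \emph{only} when $\gamma_q<0$, which is exactly what makes the sign condition come out as $f_q\ge 0$ rather than $f_q\le 0$.
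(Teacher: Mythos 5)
Your proposal is correct and follows essentially the same route as the paper: solve the singular normal equations at $\lambda=\gamma_1^{-1}$ (respectively $\gamma_q^{-1}$) to find that $W_N$ is nonempty iff $\delta_1=0$ (resp. $\delta_q=0$) with the kernel coordinate free and all others pinned to $\widehat{y}_1,\widehat{z}_{(1)}$, substitute into $Q^{\ast}$ to reduce feasibility to $\gamma_1 z_1^2+f_1=0$, and invoke Lemma \ref{RESULT: P_omega**: suff't. conditions to solve} to identify $W(\lambda)$ with $\widehat{W}$. Your row-by-row verification (including the observation that $2d^{\prime}w$ contributes only $2\varepsilon\widehat{y}_1$) simply makes explicit what the paper leaves implicit, and your symmetry remark for part (2) matches the paper's ``entirely similar'' closing.
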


\begin{proof}
$W_{N}(\gamma_{1}^{-1})\neq\emptyset\Leftrightarrow\delta_{1}=0$ in which
case:
\[
w\in W_{N}(\gamma_{1}^{-1})\Leftrightarrow\lbrack y=\widehat{y}_{1}\text{ and
}z_{(1)}=\widehat{z}_{(1)}]\text{, where }z\equiv(z_{1},z_{(1)}^{\prime
})^{\prime}\text{,}%
\]
$z_{1}$ being unconstrained. Thus,%
\[
W_{1}\neq\emptyset\Leftrightarrow\lbrack\delta_{1}=0\text{ and }\exists
\;z_{1}\text{\ with }\gamma_{1}z_{1}^{2}+f_{1}=0]\Leftrightarrow\lbrack
\delta_{1}=0\text{ and }f_{1}\leq0]\text{.}%
\]
(1) now follows from Lemma
\ref{RESULT: P_omega**: suff't. conditions to solve}. The proof of (2) is
entirely similar.
\end{proof}

In summary, $\mathbb{P}_{\overline{\omega}^{\ast\ast}}$ has either two
$(\gamma_{q}>0)$ or three $(\gamma_{q}<0)$ types of potential Lagrangian
solution set -- $W_{\circ}$, $W_{1}$\ and $W_{q}$ -- Propositions
\ref{RESULT: key properties of f -- esp. f_ and f^_} to
\ref{RESULT: W_1 and W_q: when nonempty? what content?} together establishing
precisely when these potentials are realised and, in each case, what the
corresponding solution set $\widehat{W}$\ then is.

\subsection{The minimised objective function and the solution set \label{SECTION: min'd objective fn and the soln set}}

We are now ready to solve any regular dimension-reduced canonical form
$\mathbb{P}_{\overline{\omega}^{\ast\ast}}$ in which, by definition, either
$m_{0}=0$\ or $[m_{0}>0$ and $\varepsilon>0]$.

To aid geometric interpretation, recall that, under the assumptions and
notation of Definition \ref{DEFINITION: canonical form}:\newline(a) $\varepsilon:=l_{0}\geq0$ denotes the
length of the orthogonal projection of $b$ onto the null space of $B$, whose
dimension is $m_{0}\geq0 $; \newline(b) for each distinct nonzero eigenvalue
$\gamma_{1}>...>\gamma_{q}$ of $B$, $\delta\in R^{q}$\ has general element
$\delta_{i}:=l_{i}/\left\vert \gamma_{i}\right\vert $ in which $l_{i}\geq0$
denotes the length of the orthogonal projection of $b$ onto the corresponding
eigenspace of $B$, whose dimension is $m_{i}\geq1$.
Accordingly:\smallskip%

\begin{tabular}
[c]{llrl}%
the linear part of the constraint vanishes & $\Leftrightarrow$ &
$m_{0}=0$ & $\Leftrightarrow\varepsilon=0,$\\
the origin is feasible & $\Leftrightarrow$ & $k^{\ast}=0,$
& \\
the origin is the target & $\Leftrightarrow$ & $\delta=0,$
& \ and:\\
the constraint is, at least in part, elliptic & $\Leftrightarrow$ &
$\gamma_{q}>0.$ &
\end{tabular}
\smallskip

We distinguish three mutually exclusive and exhaustive types of regular
dimension-reduced canonical form $\mathbb{P}_{\overline{\omega}^{\ast\ast}}$.
The first two are trivial.

We call $\mathbb{P}_{\overline{\omega}^{\ast\ast}}$ \textit{non-Lagrangian} if
$[m_{0}=0,\,k^{\ast}=0,\,\delta\neq0_{q}$ and $\gamma_{q}>0]$, (so that, in
particular, $W_{q}$ is undefined). Geometrically, if the feasible set is the
origin, the target being a positive distance\ away. As Theorem
\ref{RESULT: MAIN final theorem} establishes, in this case, both $W_{\circ}$
and $W_{1}$ are empty. That is, the constraint and normal equations are
inconsistent, so that $\mathbb{P}_{\overline{\omega}^{\ast\ast}}$\ is
\textit{not} amenable to Lagrangian solution.

We call $\mathbb{P}_{\overline{\omega}^{\ast\ast}}$
\textit{multiply-Lagrangian} if $\underline{f}=0=\overline{f}$. That is, if
$[m_{0}=0,\,k^{\ast}=0$ and $\delta=0_{q}]$. Geometrically, if the target is
the origin, through which the conic defining the constraint passes. As Theorem
\ref{RESULT: MAIN final theorem} establishes, in this case, $\widehat{W}%
$\ coincides with \textit{each} of the nonempty sets $W_{\circ}$, $W_{1}$ and,
when defined, $W_{q}$.

Finally, we call $\mathbb{P}_{\overline{\omega}^{\ast\ast}}$
\textit{singly-Lagrangian} if it is neither non-Lagrangian\ nor
multiply-Lagrangian. Algebraically, if either $[m_{0}=0,\,k^{\ast}%
=0,\,\delta\neq0_{q}$ and $\gamma_{q}<0]$,\ or $[m_{0}=0$ and $k^{\ast}\neq
0]$,\ or $[m_{0}>0$ and $\varepsilon>0]$. As Theorem
\ref{RESULT: MAIN final theorem} establishes, in this case, exactly
\textit{one} of $W_{\circ}$, $W_{1}$ and $W_{q}$\ is nonempty, and so provides
the solution set required.

\begin{theorem}
\label{RESULT: MAIN final theorem}The minimised objective function
$\underline{L}^{\ast}$ and solution set $\widehat{W}$ for a regular
dimension-reduced canonical form $\mathbb{P}_{\overline{\omega}^{\ast\ast}}%
$\ are as follows.
\begin{enumerate}
\item[(a)] If $\mathbb{P}_{\overline{\omega
}^{\ast\ast}}$\ is non-Lagrangian, $W_{\circ}=W_{1}=\emptyset$, while$:$%
\[
\underline{L}^{\ast}=%
{\textstyle\sum\nolimits_{i=1}^{q}}
\gamma_{i}^{-2}l_{i}^{2}>0\text{ attained on\ }\widehat{W}=W=\{0_{q}\}\text{.}%
\]
\item[(b)] If $\mathbb{P}_{\overline{\omega}^{\ast\ast}}$\ is multiply-Lagrangian$:$%
\[
\underline{L}^{\ast}=0\text{ attained on\ }\widehat{W}=W_{\circ}=W_{1}%
=W_{q}=\{0_{q}\}\text{.}%
\]
\item[(c)] Otherwise, if $\mathbb{P}_{\overline{\omega}^{\ast\ast}}$\ is
singly-Lagrangian, $\widehat{W}$\ is the unique nonempty member of
$\{W_{\circ},W_{1},W_{q}\}$.
Specifically, $\underline{f}$ and $\overline{f}$
being as in Proposition \ref{RESULT: key properties of f -- esp. f_ and f^_}%
$:$\smallskip\newline%
\begin{tabular}
[c]{ll}%
if $\gamma_{q}>0$, & \negthinspace\negthinspace\negthinspace\negthinspace
\negthinspace$\underline{f}<0$, while $\widehat{W}$ is $W_{\circ}$\ or $W_{1}%
$\ according as $\overline{f}>0$ or $\overline{f}\leq0;$\smallskip\\
if $\gamma_{q}<0$, & \negthinspace\negthinspace\negthinspace\negthinspace
\negthinspace$\widehat{W}$ is $W_{\circ}$, $W_{1}$\ or $W_{q} $\ according as
$\underline{f}<0<\overline{f}$, $\overline{f}\leq0$\ or $\underline{f}\geq
0.$\smallskip
\end{tabular}
\newline When $\widehat{W}=W_{\circ}$,
\[
\underline{L}^{\ast}=\widehat{\lambda}^{2}\{l_{0}^{2}+%
{\textstyle\sum\nolimits_{i=1}^{q}}
(1-\widehat{\lambda}\gamma_{i})^{-2}l_{i}^{2}\},
\]
attained at $w=w(\widehat{\lambda})$, where $\widehat{\lambda}$ uniquely solves
$f(\lambda)=0$.\medskip\newline When $\widehat{W}=W_{1}$,
\[
\underline{L}^{\ast}=(\gamma_{1}^{-1})^{2}\{l_{0}^{2}+%
{\textstyle\sum\nolimits_{i=2}^{q}}
(1-\gamma_{1}^{-1}\gamma_{i})^{-2}l_{i}^{2}\}+\zeta_{1}^{2},
\]
attained at $w=(\widehat{y}_{1},\,\pm\zeta_{1},\,\widehat{z}_{(1)}^{\,\prime
})^{\prime}$, where $\zeta_{1}:=\sqrt{(-f_{1})/\gamma_{1}}\geq0$%
.\medskip\newline When $\widehat{W}=W_{q}$,
\[
\underline{L}^{\ast}=(\gamma_{q}^{-1})^{2}\{l_{0}^{2}+%
{\textstyle\sum\nolimits_{i=1}^{q-1}}
(1-\gamma_{q}^{-1}\gamma_{i})^{-2}l_{i}^{2}\}+\zeta_{q}^{2},
\]
attained at $w=(\widehat{y}_{q},\,\widehat{z}_{(q)}^{\,\prime},\,\pm\zeta
_{q})^{\prime}$, where $\zeta_{q}:=\sqrt{f_{q}/(-\gamma_{q})}\geq0$.
\end{enumerate}
\end{theorem}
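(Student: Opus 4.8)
The plan is to read the theorem off the auxiliary results of this section, treating the three types in turn; their mutual exclusivity and exhaustiveness over regular forms is immediate on partitioning by $m_0$, then (for $m_0=0$) by $k^*$, $\delta$ and the sign of $\gamma_q$. The engine is Lemma~\ref{RESULT: P_omega**: suff't. conditions to solve}: once any one of the potential sets $W_\circ$, $W_1$, $W_q$ is shown nonempty it equals $\widehat W$, and $\underline L^*=\|w_N-w_0\|^2$ for any $w_N$ in it. The task therefore splits into (i) deciding, in each type, which of $W_\circ$, $W_1$, $W_q$ are empty, via Propositions~\ref{RESULT: W_0: when nonempty? what content?} and~\ref{RESULT: W_1 and W_q: when nonempty? what content?} driven by the monotonicity and boundary values of $f$ recorded in Proposition~\ref{RESULT: key properties of f -- esp. f_ and f^_}, and (ii) evaluating $\|w-w_0\|^2$ coordinatewise.

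For the two trivial types the emptiness pattern is forced. In the non-Lagrangian case $[m_0=0,k^*=0,\delta\neq0_q,\gamma_q>0]$ one has $\varepsilon=0$, so $\underline f=-k^*=0$ with $f$ strictly increasing; hence $f>0$ on $\Lambda^\circ$ and $W_\circ=\emptyset$, while $\overline f>0$ makes the criterion $[\delta_1=0,\,f_1\le0]$ of Proposition~\ref{RESULT: W_1 and W_q: when nonempty? what content?}(1) fail, so $W_1=\emptyset$. The Lagrangian yielding nothing, I argue directly: with $\Gamma^*\succ O$ and the constraint homogeneous, $W=\{0_q\}$, giving $\underline L^*=\|w_0\|^2=\sum\gamma_i^{-2}l_i^2>0$ on $\widehat W=\{0_q\}$. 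In the multiply-Lagrangian case $\underline f=0=\overline f$, Proposition~\ref{RESULT: W_0: when nonempty? what content?}(b) gives $W_\circ=\{0_q\}$; substituting $\delta=0$, $\varepsilon=0$ in Definition~\ref{DEFINITION: fcf} gives $f_1=f_q=0$, hence $\zeta_1=\zeta_q=0$ and $W_1=W_q=\{0_q\}$ too, with $\underline L^*=0$ since $w_0=0$.

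The singly-Lagrangian case carries the weight. When $\gamma_q>0$ I first establish $\underline f<0$: if $\varepsilon>0$ this is the $-\infty$ branch of Proposition~\ref{RESULT: key properties of f -- esp. f_ and f^_}, while if $\varepsilon=0$ (so $m_0=0,\,k^*\neq0$) feasibility of $z'\Gamma^*z=k^*$ with $\Gamma^*\succ O$ forces $k^*>0$, whence $\underline f=-k^*<0$. Then the dichotomy $\overline f>0$ versus $\overline f\le0$ routes $\widehat W$ to $W_\circ$ (Proposition~\ref{RESULT: W_0: when nonempty? what content?}(a)) or to $W_1$, the other set being empty by the same sign test. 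When $\gamma_q<0$, all three sets are defined, and the conditions $\underline f<0<\overline f$, $\overline f\le0$, $\underline f\ge0$ are mutually exclusive and jointly exhaustive (as $\underline f\le\overline f$, with strict inequality unless $\delta=0,\varepsilon=0$, in which case $f\equiv-k^*\neq0$ still forces exactly one of the last two); they select $W_\circ$, $W_1$, $W_q$ respectively, the boundary identities $\overline f=f_1$ (when $\delta_1=0$) and $\underline f=f_q$ (when $\delta_q=0$) confirming the two unselected sets empty. The stated values of $\underline L^*$ then emerge coordinatewise from $w(\widehat\lambda)-w_0=\widehat\lambda[I-\widehat\lambda\Delta]^{-1}(\Delta w_0+d)$: the null-space coordinate contributes $\widehat\lambda^2l_0^2$ and each $z_i$ contributes $\widehat\lambda^2(1-\widehat\lambda\gamma_i)^{-2}l_i^2$ (using $\gamma_i^2\delta_i^2=l_i^2$), giving the $W_\circ$ formula; the $W_1$ and $W_q$ formulas are the same evaluation at $\lambda=\gamma_1^{-1}$, resp.\ $\gamma_q^{-1}$, with the single term rendered indeterminate by $\delta_1=0$, resp.\ $\delta_q=0$, replaced by the free-coordinate contribution $\zeta_1^2$, resp.\ $\zeta_q^2$.

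I expect the main obstacle to be the singly-Lagrangian bookkeeping: matching the strict and non-strict sign conditions on $\underline f$ and $\overline f$ against the emptiness criteria of Proposition~\ref{RESULT: W_1 and W_q: when nonempty? what content?}, and using the boundary identities $\overline f=f_1$, $\underline f=f_q$ to eliminate the unwanted sets in precisely the cases $\delta_1=0$ or $\delta_q=0$. The one genuinely non-mechanical point is the appeal to feasibility ($W\neq\emptyset$) to secure $\underline f<0$ (equivalently $k^*>0$) in the $\gamma_q>0$, $\varepsilon=0$ subcase, since this is what guarantees a Lagrangian solution exists there at all.
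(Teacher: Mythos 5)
Your proposal is correct and follows essentially the same route as the paper's own proof, which likewise reads the theorem off Lemma \ref{RESULT: P_omega**: suff't. conditions to solve} together with Propositions \ref{RESULT: key properties of f -- esp. f_ and f^_}--\ref{RESULT: W_1 and W_q: when nonempty? what content?}, noting in case (a) that $W=\{0_{q}\}$ by definition while $W_{\circ}=W_{1}=\emptyset$, in case (b) that $w_{0}=0_{q}$ and $f_{1}=f_{q}=0$, and in case (c) that feasibility of the constraint forces $k^{\ast}>0$, hence $\underline{f}<0$, in the $\gamma_{q}>0$, $\varepsilon=0$ subcase. Your write-up merely makes explicit what the paper's proof leaves as ``detailed, straightforward application'' of those auxiliary results -- namely the routing of $\widehat{W}$ via the boundary identities $\overline{f}=f_{1}$ (when $\delta_{1}=0$) and $\underline{f}=f_{q}$ (when $\delta_{q}=0$), and the coordinatewise evaluation of $\left\Vert w-w_{0}\right\Vert ^{2}$ from $w(\widehat{\lambda})-w_{0}=\widehat{\lambda}[I-\widehat{\lambda}\Delta]^{-1}(\Delta w_{0}+d)$.
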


\begin{proof}
The result follows from detailed, straightforward application of the auxiliary
results of Section \ref{SECTION: Feasible solutions to the normal equations},
noting the following:
\begin{enumerate}
\item[(a)]Here, $W_{\circ}=W_{1}=\emptyset
$\ as $\underline{f}=0$ and $f_{1}>0$ while, by definition, $W=\{0_{q}%
\}$.
\item[(b)] Here, $w_{0}=\widehat{w}_{\circ}=0_{q}$ and $\widehat{z}%
_{(1)}=\widehat{z}_{(q)}=0_{q-1}$, while $f_{1}=f_{q}=0$.
\item[(c)] Here, if
$\gamma_{q}>0$, either $\varepsilon>0$ so that $\underline{f}=-\infty$, or
$m_{0}=0\neq k^{\ast}$ so that $\underline{f}=-k^{\ast}$, consistency of the
constraint implying $k^{\ast}\geq0$. In either case, $\underline{f}<0$. Again,
$\underline{f}=0=\overline{f}$\ is impossible, so that $W_{\circ}\neq
\emptyset\Leftrightarrow\underline{f}<0<\overline{f}$ when, $f$\ being
continuous and strictly increasing, $f(\lambda)=0$\ has a unique solution.
\end{enumerate} 
\end{proof}

\begin{corollary}
\label{RESULT: (non) uniqueness of soln to (d-r) c.f.}Let $\mathbb{P}%
_{\omega^{\ast}}$\ be a canonical form whose dimension-reduced form
$\mathbb{P}_{\overline{\omega}^{\ast\ast}}$\ is regular. Then:
\begin{enumerate}
\item[(1)] $\mathbb{P}_{\omega^{\ast}}$\ has a unique solution whenever
$\mathbb{P}_{\overline{\omega}^{\ast\ast}}$\ does.
\item[(2)] $\mathbb{P}_{\overline{\omega}^{\ast\ast}}$\ does \textbf{not} have a unique
solution if, and only if, it is singly-Lagrangian and either:
\begin{enumerate} 
\item[(a)] $[f_{1}<0,\delta_{1}=0]$, when its solutions are unique up to the
sign of $\widehat{z}_{1}\neq0$;  or: 
\item[(b)]
$[f_{q}>0,\delta_{q}=0]$, when its solutions are unique up to the sign of
$\widehat{z}_{q}\neq0$,
\end{enumerate}
in which cases solutions to
$\mathbb{P}_{\omega^{\ast}}$\ are unique up to orthogonal indeterminacy of (a)
$\widehat{x}_{1}$\ or (b) $\widehat{x}_{q}$\ respectively.
\end{enumerate}
\end{corollary}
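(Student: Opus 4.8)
The plan is to read off both parts from the explicit solution sets catalogued in Theorem \ref{RESULT: MAIN final theorem}, using Theorem \ref{RESULT: canonical form -- optimised} together with its Corollary \ref{RESULT: charc'n of when d-r c.f. solution determines multiple c.f. solns} to transfer (non-)uniqueness between the dimension-reduced form and the canonical form. The organising observation is that, in every solution formula of Theorem \ref{RESULT: MAIN final theorem}, the components of $\widehat{z}$ obey the implication $\delta_i=0\Rightarrow\widehat{z}_i=0$, the sole possible exceptions being the boundary components $\widehat{z}_1=\pm\zeta_1$ (when $\widehat{W}=W_1$) and $\widehat{z}_q=\pm\zeta_q$ (when $\widehat{W}=W_q$). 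Since Corollary \ref{RESULT: charc'n of when d-r c.f. solution determines multiple c.f. solns} states that a single $\widehat{w}$ determines more than one solution to $\mathbb{P}_{\omega^{\ast}}$ exactly when $\delta_i=0$ and $\widehat{z}_i\neq0$ for some $i$, this implication is precisely what couples the two layers.

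For part (2), I would run through the three types of Theorem \ref{RESULT: MAIN final theorem}. In the non-Lagrangian and multiply-Lagrangian cases $\widehat{W}=\{0_q\}$ is a singleton, so non-uniqueness cannot arise. In the singly-Lagrangian case $\widehat{W}$ is the unique nonempty member of $\{W_{\circ},W_1,W_q\}$: by Proposition \ref{RESULT: W_0: when nonempty? what content?} the set $W_{\circ}$ is always a singleton when nonempty, whereas $W_1=\{(\widehat{y}_1,\pm\zeta_1,\widehat{z}_{(1)}^{\,\prime})^{\prime}\}$ and $W_q=\{(\widehat{y}_q,\widehat{z}_{(q)}^{\,\prime},\pm\zeta_q)^{\prime}\}$ fail to be singletons precisely when $\zeta_1>0$, i.e.\ $f_1<0$, respectively $\zeta_q>0$, i.e.\ $f_q>0$. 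By Proposition \ref{RESULT: W_1 and W_q: when nonempty? what content?}, nonemptiness of $W_1$ already forces $\delta_1=0$ and that of $W_q$ forces $\delta_q=0$; combining these yields exactly the two alternatives $[f_1<0,\delta_1=0]$ and $[f_q>0,\delta_q=0]$. The concluding orthogonal-indeterminacy statement then follows by feeding the offending component ($\widehat{z}_1=\pm\zeta_1\neq0$ with $\delta_1=0$, or $\widehat{z}_q=\pm\zeta_q\neq0$ with $\delta_q=0$) into Corollary \ref{RESULT: charc'n of when d-r c.f. solution determines multiple c.f. solns}, which locates the indeterminacy on $\widehat{x}_1$ or $\widehat{x}_q$ respectively.

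For part (1), I would argue via the same implication. Assuming $\mathbb{P}_{\overline{\omega}^{\ast\ast}}$ has a unique solution $\widehat{w}$, I would verify case-by-case that $\delta_i=0\Rightarrow\widehat{z}_i=0$ holds for every $i$: in the $W_{\circ}$ formula $\widehat{z}_i=(1-\widehat{\lambda}\gamma_i)^{-1}\delta_i$, and in the boundary formulas the off-index components $\widehat{z}_{(1)}$, $\widehat{z}_{(q)}$ of Definition \ref{DEFINITION: fcf} are likewise proportional to the corresponding $\delta_i$, while the one component that could violate the implication ($\widehat{z}_1=\pm\zeta_1$ or $\widehat{z}_q=\pm\zeta_q$) must vanish precisely because uniqueness of $\widehat{w}$ forces $\zeta_1=0$ or $\zeta_q=0$ there. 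Hence no index has $\delta_i=0$ with $\widehat{z}_i\neq0$, so by Corollary \ref{RESULT: charc'n of when d-r c.f. solution determines multiple c.f. solns} the unique $\widehat{w}$ determines a unique $\widehat{x}$, and $\mathbb{P}_{\omega^{\ast}}$ is uniquely solved.

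The main obstacle I anticipate is the bookkeeping in this last step: one must confirm that the sub-vectors $\widehat{z}_{(1)}$ and $\widehat{z}_{(q)}$ genuinely carry the implication $\delta_i=0\Rightarrow\widehat{z}_i=0$ through all indices $i\neq1,q$, and that the only component escaping it is exactly the one governed by $\zeta_1$ or $\zeta_q$. Once that is checked, both parts reduce to routine matching of Theorem \ref{RESULT: MAIN final theorem} against Corollary \ref{RESULT: charc'n of when d-r c.f. solution determines multiple c.f. solns}.
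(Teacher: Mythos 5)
Your proposal is correct and takes essentially the same approach as the paper's own proof: non-uniqueness of $\widehat{w}$ is read off from the explicit solution sets in Theorem \ref{RESULT: MAIN final theorem} (with the boundary cases $W_{1}$, $W_{q}$ forcing $\delta_{1}=0$, $\delta_{q}=0$ and non-singleton solution sets exactly when $f_{1}<0$, $f_{q}>0$), and (non-)uniqueness is transferred to $\mathbb{P}_{\omega^{\ast}}$ via the implication $\delta_{i}=0\Rightarrow\widehat{z}_{i}=0$ combined with Corollary \ref{RESULT: charc'n of when d-r c.f. solution determines multiple c.f. solns}. You simply write out the case-by-case ``inspection'' of Theorem \ref{RESULT: MAIN final theorem} that the paper leaves implicit.
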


\begin{proof}
The characterisation of non-uniqueness of solution to $\mathbb{P}%
_{\overline{\omega}^{\ast\ast}}$\ is immediate from Theorem
\ref{RESULT: MAIN final theorem}\ and Proposition
\ref{RESULT: key properties of f -- esp. f_ and f^_}, the rest of (2) then
following from Corollary
\ref{RESULT: charc'n of when d-r c.f. solution determines multiple c.f. solns}%
. Inspection of Theorem \ref{RESULT: MAIN final theorem}\ also establishes
that, whenever $\mathbb{P}_{\overline{\omega}^{\ast\ast}}$\ has a unique
solution, $\delta_{i}=0\Rightarrow\widehat{z}_{i}=0$. Part (1) now follows,
using again Corollary
\ref{RESULT: charc'n of when d-r c.f. solution determines multiple c.f. solns}.
\end{proof}

Theorem \ref{RESULT: MAIN final theorem}\ and its Corollary
\ref{RESULT: (non) uniqueness of soln to (d-r) c.f.}\ complete our primary
objective, providing the minimised objective function and solution set of any
regular dimension-reduced $\mathbb{P}_{\overline{\omega}^{\ast\ast}}$\ and so,
via Theorem \ref{RESULT: canonical form -- optimised}, of any initial
canonical form $\mathbb{P}_{\omega^{\ast}}$.

Two worked examples of this overall approach are given in \cite{AlbersCritchleyGowerExamples}.

\section{Intrinsic instability \label{SECTION: Intrinsic instability}}

The above analysis of any, equality or inequality constrained, problem
$\mathbb{P}_{\omega}$ rests on several partitions of possibilities. Passage
between different members of a partition can involve movement between equality
and inequality of the same two reals. Or, again, between weak ($\leq$) and strict ($<$) versions of the same
inequality. As a result, both the form of the solution set and, indeed, of the
problem itself can be intrinsically unstable under arbitrarily small
perturbations of problem parameters. 
Whether or not the solution set $\widehat{X}_{\omega}$\ varies continuously
with $\omega$ across such partition boundaries is implicit in the above
analysis. In particular, in its summative Theorems \ref{RESULT: inequality constrained variant},
\ref{Theorem 2},
\ref{RESULT: canonical form -- optimised} and
\ref{RESULT: MAIN final theorem}, and
their key Corollaries \ref{RESULT: charc'n of when d-r c.f. solution determines multiple c.f. solns} and
\ref{RESULT: (non) uniqueness of soln to (d-r) c.f.}. The discontinuities involved can be
dramatic, as the following instances illustrate.

Potential synergetic uses of these analytical insights in connection with efficient numerical optimisation methods are 
noted in the closing discussion (Section \ref{SECTION: Worked example}).

\subsection{Instability of the form of the solution set}\label{SUBSECTION:91}

We begin with a marked instance of the passage from non-unique to unique solutions. As in Example
\ref{EXAMPLE: min dist centre to sphere}, consider minimisation of the distance to a sphere from its centre, whose
solution set is, of course, the sphere itself. Changing this to minimising the
same distance from \textit{any} other point, the solution suddenly becomes
unique. This instability corresponds to the passage from $\delta_{1}=0$\ to
$\delta_{1}>0$\ in Corollary
\ref{RESULT: (non) uniqueness of soln to (d-r) c.f.}.  

In the hyperbolic case, there can be extreme directional instability, due to
the orthogonality of the eigenspaces of $\gamma_{1}$ and $\gamma_{q}$. For
instance, consider minimisation of the distance from the origin to the
hyperbola $z_{1}^{2}-z_{2}^{2}=k^{\ast}$, both of whose eigenvalues, $\pm1$,
are simple. When $k^{\ast}=0$, this comprises the lines $z_{2}=\pm z_{1}$
which meet at the (repeated) solution $(0,0)^{\prime}$, while $\mathbb{P}%
_{\overline{\omega}^{\ast\ast}}$\ is multiply-Lagrangian. Otherwise, it
comprises two branches with these lines as asymptotes, $\mathbb{P}%
_{\overline{\omega}^{\ast\ast}}$\ being singly-Lagrangian. For $k^{\ast}>0$,
these branches meet the $z_{1}$--axis at the twin solutions $\pm(\sqrt
{k^{\ast}},0)^{\prime}$ while, for $k^{\ast}<0$, they meet the $z_{2}$--axis
at the twin solutions $\pm(0,\sqrt{-k^{\ast}})^{\prime}$. Thus, no matter how
small we take $k_{+}^{\ast}>0>k_{-}^{\ast}$, the twin solutions for
$k_{+}^{\ast}$ are always orthogonal to those for $k_{-}^{\ast}$. In terms of
Theorem \ref{RESULT: MAIN final theorem}, the solution set $\widehat{W}%
$\ changes from $W_{1}$\ to $W_{q}$ via $W_{\circ}$ as $k^{\ast} $\ goes from
positive to negative via zero.  

Geometrically, it is clear that this same extreme directional instability can
arise in the elliptic case. Consider, for example, minimising the distance
from the origin to the ellipse $z_{1}^{2}+z_{2}^{2}/\kappa^{2}=1$ $(\kappa>0)$
so that, when $\kappa=1$, the solution set is the ellipse itself. Then, with
$\kappa_{+}>1>\kappa_{-}$, no matter how close we take $\kappa_{+}$ and
$\kappa_{-}$\ to $1$ -- and so, to each other -- the solution set for
$\kappa_{+}$ is $\pm(1,0)^{\prime}$ while, for $\kappa_{-}$, it remains in the
directions $\pm(0,1)^{\prime}$ orthogonal to these.

\subsection{Instability of the form of the problem}
Intrinsic instability of the form of the problem comes, itself, in a variety
of forms, as we illustrate.

As a first instance,  
consider the variant of Example \ref{EXAMPLE 51}(b) with constraint $(x_{1}+\xi_{1}%
)x_{0}=1$ for given $\xi_{1}\neq0$. Whereas the perfect solution here has
$\widehat{x}_{1}=0$ for every $\xi_{1}$, it has $\widehat{x}_{0}=1/\xi_{1}$
which tends to $+\infty$ as $\xi_{1}\rightarrow0_{+}$ and to $-\infty$ as
$\xi_{1}\rightarrow0_{-}$. Such instability holds generally. Indeed, Theorem
\ref{Theorem 2} shows that, if $\mathbb{P}_{\omega}$ admits an essentially
perfect solution, it is intrinsically unstable under small constraint
perturbations. And, relatedly, that the occurrence of essentially perfect
solutions to $\mathbb{P}_{\omega}$ is itself intrinsically unstable, depending
as it does on the vanishing of the constraint parameter $c_{0}$. Replacing
$c_{0}$ by $C_{10}$, the same is true of reducibility of $\mathbb{P}_{\omega}$
when $B_{00}$ is singular. That is, when $A$ is positive semi-definite, there
is intrinsic instability at the boundaries between the three possibilities --
$\mathbb{P}_{\omega}$ admits a perfect solution,\ $\mathbb{P}_{\omega}$ admits
an essentially perfect solution, or $\mathbb{P}_{\omega}$ admits a projected,
yet imperfect, reduced form -- their occurrences being characterised in
Theorem \ref{Theorem 2}.

As a second instance, any positive
semi-definite $A$ is arbitrarily close to a positive definite matrix $A(\kappa)$, 
$\kappa>0$  -- for example,
$A(\kappa)=A+\kappa I$ -- analysis of the problem changing from that of Section
\ref{SECTION: A p.s.d.} to that discussed in the sequel. 

Similar intrinsic instabilities in problem form occur at the boundaries
between members of the following partitions made for positive definite
$A$:
\begin{enumerate}
\item[(i)] the four possible forms of constraint -- elliptic,
hyperbolic, or the partly parabolic variant of either -- as detailed in
Definition \ref{DEFINITION: dimn-reduced canonical form};
\item[(ii)] in the dimension-reduced canonical form
$\mathbb{P}_{\omega^{\ast\ast}}$, whether it is regular or not -- as detailed
in Definition \ref{DEFINITION 72};
\item[(iii)] the three possibilities --
non-Lagrangian, multiply Lagrangian and singly Lagrangian -- as detailed in
Section \ref{SECTION: min'd objective fn and the soln set}.
\end{enumerate}

\section{Discussion\label{SECTION: Worked example}}

The current approach to the problem addressed in this paper, and the more
generally applicable approach of generalised trust region (GTR) methods,
broadly reviewed at the outset, may be compared as follows. Their intrinsic
complementarity is evident and raises two substantial questions for future research.

Being derivative-based, the\ GTR approach is, intrinsically,\textit{\ local}.
A local optimum is identified\textit{\ }implicitly, and then
sought\ numerically, via simultaneous satisfaction of an appropriate set of
conditions. These may include conditions additional to those of the problem
itself (notably, those introduced to ensure numerical stability). Instances of
the problem not satisfying such conditions require separate resolution. Under
certain such additional constraint qualifications, it may be possible to
characterise global optimality: see, for example, \cite{PongWolkowicz,More,SternWolkowicz}.
Unless a local
optimum can be shown to be unique, identifying a global optimum requires
repeated use of one or more algorithms. Of course, at most a finite number of
local optima may be obtained in this way. The desired trade-off to be struck between the
speed, accuracy and numerical stability of the overall computational procedure
depends on context and purpose.

In contrast, the approach presented here is, intrinsically,\textit{\ global
}in three distinct senses: (a) it partitions all possible instances of the
problem, and of its solution set, without exception; (b) its simplifying
transformations are applied to the whole space; and: (c) its solution sets
comprise global optima only. These sets are specified\ explicitly in closed
algebraic form, affording both insight and interpretation. In particular, it
is shown that, depending on precisely defined circumstances, there may be 0,
1, 2, or more -- indeed, uncountably many -- global optima, geometry
illuminating when each possibility occurs. Overall, the current global,
analytic approach throws a distinctive algebraic and geometric spotlight on
the diverse nature of different instances of both problem and solution set;
and, on the possibility of \textit{intrinsic} instability of both, pinpointing
precisely when this occurs: specifically, when passage between its different
partition members involve distinctions that cannot be drawn in the
floating-point world, yet lead to radical changes in the form of either the
solution set, or of problem itself.

Two substantial questions arise at once:

\begin{enumerate} 
\item  can the current global, analytic approach be extended to accommodate
possible non-convexity of the objective function?

\item  can the intrinsic complementarity it shares with the local,
computational GTR approach be exploited, releasing potential synergies between them?
\end{enumerate} 
Motivations for examining these questions in future work, and first pointers
towards possible answers, are briefly offered in closing.

Concerning (1), broadening the applicability of the current approach in this
way would bring it closer to that of GTR methods. One natural strategy here
would work with affine transformations such that the partitioned least-squares
form $\operatorname{diag}(I_{r},O_{n-r})$ of $A$ is replaced by
$\operatorname{diag}(I_{r_{+}},O_{n-r},-I_{r_{-}})$, for some affine
invariants $r_{+}\geq0$, $r_{-}\geq0$\ with $r_{+}+r_{-}=r$.

Concerning (2), generic motivation here comes from seeking to capitalise on
the distinctive advantages of both approaches: the former providing, in
principle, global solution sets that are complete, exact, insightful and
interpretable; the latter providing, in practice, algorithms whose
specification and deployment can be varied to deliver a balance of speed,
accuracy and numerical stability well-suited to a given context and purpose.
Especially strong when the present problem arises as one iteration of a
general purpose optimisation procedure, specific motivation here includes the
need to handle the possibility of various forms of intrinsic -- not just
numerical -- instability highlighted by the present global analysis and
illustrated in Section \ref{SECTION: Intrinsic instability}: among these, the possibility of undetectably small
changes leading to solutions in \textit{orthogonal} directions is particularly
striking (see Section \ref{SUBSECTION:91}). One natural strategy here would be to modify existing
computational procedures so that, when numerical checks informed by global
analysis indicate its appropriateness, multiple forms of problem or solution
set are entertained. This strategy has the potential advantage of identifying
several, quite different, locally-promising search directions at each iteration.

Concerning both, it will be of interest to see how far, and by what means,
progress can best be made in addressing these challenging, open questions.

\bibliographystyle{plain} 
\bibliography{laa_bib}
\clearpage 
\appendix
\section{Two worked examples}
We provide with two worked examples and a short conclusion. The examples differ
only by a single element of $A$, yet involve completely different approaches
to their solution. As such, they illustrate one of the intrinsic instabilities
noted in Section 9. Namely, that at the boundary between $A\succeq O$\ where
Section 5 applies, and $A\succ O$\ where Sections 6 to 8 pertain.

Whereas this generic type of instability can of course arise whatever the
scale of the problem, to make things explicit, we work here with $n=3$ and
with values of $\omega$\ for which exact calculations are fairly
straightforward. Specifically, we take $k=1$, $b=0_{3}$, $t=(1,1,1)^{\prime}$,
$B=I_{3}$\ and%
\[
A=A(\kappa):=\left(
\begin{array}
[c]{ccc}%
1 & 0 & 0\\
0 & 1 & -1\\
0 & -1 & 1+\kappa
\end{array}
\right)  ,\;\kappa\geq0,
\]
so that $A(0)\succeq O$, while $A(\kappa)\succ O$ for all $\kappa>0$. While
reporting exact results, we also indicate where there is -- or is not --
numerical uncertainty, especially concerning which member of a partition of
possibilities the problem on hand belongs to.

Examples \ref{EXAMPLE:1} and \ref{EXAMPLE:2} concern $\kappa=0$\ and $\kappa=3/2$\ respectively. In
the first case, one of the computed eigenvalues of $A$ will be within machine
accuracy of zero; in the second, all will be clearly positive. As
$\kappa\rightarrow0+$, both problem forms will be flagged up. Numerically, $B$
is clearly positive definite.

\begin{example}\label{EXAMPLE:1} 
Here, $A=A(0)$\ clearly has rank $r=2$, and we follow
the approach of Section \ref{SECTION: A p.s.d.}.

Analysis begins with a three-stage transformation: first, to centred least-squares
form, as in part (i) of Theorem 3.1; second, to simplified form, via (5)
of Lemma 5.2; and third, to simultaneous diagonal form, via the further linear
transformation of Remark 5.2. Overall, this transformation is $x\rightarrow
x_{g}=T^{-1}(x-t)$\ where
\[
T=\left(
\begin{array}
[c]{ccc}%
1 & 0 & 0\\
0 & -\frac{1}{2} & -\frac{1}{\sqrt{2}}\\
0 & \frac{1}{2} & -\frac{1}{\sqrt{2}}%
\end{array}
\right)
\]
inducing $\omega\rightarrow\omega_{g}$\ with $t_{g}=0_{3}$\ and $A_{g}%
=T^{\prime}AT=\operatorname*{diag}(1,1,0)$. These two components of
$\omega_{g}$ are known (and so do not require calculation), as is the diagonal
nature of $B$. To within machine accuracy, $B_{g}=T^{\prime}%
BT=\operatorname*{diag}(1,\tfrac{1}{2},1)$, $b_{g}=(1,0,-\sqrt{2})^{\prime}%
$\ and $k_{g}=-2$.

Next, we apply Theorem 5.1 to $\mathbb{P}_{\omega_{g}}$, dropping the
subscript $g$ where notationally convenient. Identifying terms in Definitions
\ref{DEF52} and \ref{DEF53} for this transformed member of $\Omega_{3}$, and noting that the
third diagonal element $B_{g}$ is clearly positive, we have $s_{0}=1$, so that
$B_{00}=\Gamma_{0}=(1)$, while $b_{0}=d_{0}=(-\sqrt{2})$. In particular,
$s_{0}=n-r$, so that part (ii) of Theorem \ref{Theorem 2} applies. Moreover, to within
machine accuracy, $k_{1}=0$\ and $Z_{0}(0)=\{\sqrt{2}\}$, so that
$\mathbb{P}_{\omega_{g}}$\ admits perfect solution $\widehat{x}_{g}%
=(0_{2}^{\prime},\sqrt{2})^{\prime}$, $\widehat{L}_{\omega_{g}}=0$.

Finally, invoking Theorem 2.1, we transform back to conclude that
$\mathbb{P}_{\omega}$\ has perfect solution $\widehat{x}=t+T\widehat{x}%
_{g}=(1,0,0)^{\prime}$, $\widehat{L}_{\omega}=0$.
\end{example}

\begin{example}\label{EXAMPLE:2}
Here, $A=A(3/2)$\ clearly has full rank, and we follow
Sections 6 to 8.

Analysis begins with a two-stage transformation: first, to centred -- here, full
-- least-squares form, using again Theorem 3.1(i); second, to canonical form,
via the Euclidean transformation of Theorem 6.2. Overall, apart from a
constant, this transformation is $x\rightarrow x_{g}=T^{-1}x$\ where
\[
T=\left(
\begin{array}
[c]{ccc}%
0 & -1 & 0\\
-\sqrt{\frac{8}{5}} & 0 & \frac{1}{\sqrt{15}}\\
-\sqrt{\frac{2}{5}} & 0 & -\frac{2}{\sqrt{15}}%
\end{array}
\right)
\]
inducing $\omega\rightarrow\omega_{g}$ in which $A_{g}=T^{\prime}AT$ is
$I_{3}$ and $B_{g}=T^{\prime}BT$ is diagonal. To within machine accuracy,
$B_{g}=\operatorname*{diag}(2,1,\tfrac{1}{3})$, which clearly has distinct,
positive, eigenvalues. Accordingly, $\mathbb{P}_{\omega_{g}}$\ is, itself,
dimension-reduced and regular. Finally, in the notation of Definition 6.1, we
compute that $\delta=(3/\sqrt{10},1,\sqrt{3/5})^{\prime}$ and $k^{\ast}=1$.

We solve the regular dimension-reduced canonical form $\mathbb{P}_{\omega_{g}%
}$\ via the steps described in Section 8. Since, clearly, $m_{0}=0$\ and
$k^{\ast}\neq0$, $\mathbb{P}_{\omega_{g}}$\ is singly-Lagrangian. Here,
$\Lambda^{\circ}=(-\infty,\gamma_{1}^{-1})=(-\infty,\tfrac{1}{2})$, the
function $f:\Lambda^{\circ}\rightarrow R$\ as given in Definition 8.2 taking the form:%
\begin{equation} \label{EQUATION: fcf example}
\frac{9}{5(1-2\lambda)^{2}}+\frac{1}{(1-\lambda)^{2}}+\frac{9}{5(3-\lambda
)^{2}}-1.
\end{equation}
Again, it is numerically clear that $\gamma_{3}>0,$ $\varepsilon=0$\ and
$\delta_{1}>0$\ so that, by Proposition 8.1, $\underline{f}=-k^{\ast}=-1<0$ and
$\overline{f}=+\infty$. Accordingly (Proposition 8.2), $\widehat{W}=W_{\circ
}=\{w(\widehat{\lambda})\}$\ where $\widehat{\lambda}$\ is the unique root of
$f(\lambda)=0$. Albers et al \cite{AlbersJMVA1} show how to reduce $\Lambda^{\circ}$\ to a
finite interval $(l^{\circ},u^{\circ})$, $u^{\circ}:=\gamma_{1}^{-1}$,
containing $\widehat{\lambda}$. Here, $l^{\circ}=\tfrac{1}{2}-3\sqrt{3/5}$.
This is visualised in Figure \ref{FIGURE}, where the functional form (\ref{EQUATION: fcf example}) is plotted
over an interval containing $(l^{\circ},u^{\circ})$, its two vertical lines
being at $\lambda=l^{\circ}$\ and $\lambda=u^{\circ}$, while there are
vertical asymptotes at $\lambda=\gamma_{i}^{-1}$, $(i=1,2,3)$. Numerical
solution over $(l^{\circ},u^{\circ})$ -- for example, by the bisection method
-- is straightforward, yielding here the approximate value $\widehat{\lambda
}\approx-0.527$.

Finally, substituting in the relevant part of Theorem 8.1(c), and back
transforming $\widehat{x}_{g}\rightarrow\widehat{x}$, yields the final
solution $\widehat{x}\approx(0.655,0.414,0.632)^{\prime}$, $\widehat{L}%
_{\omega}\approx0.370$.
\begin{figure}
\begin{center}
\includegraphics[width=9cm]{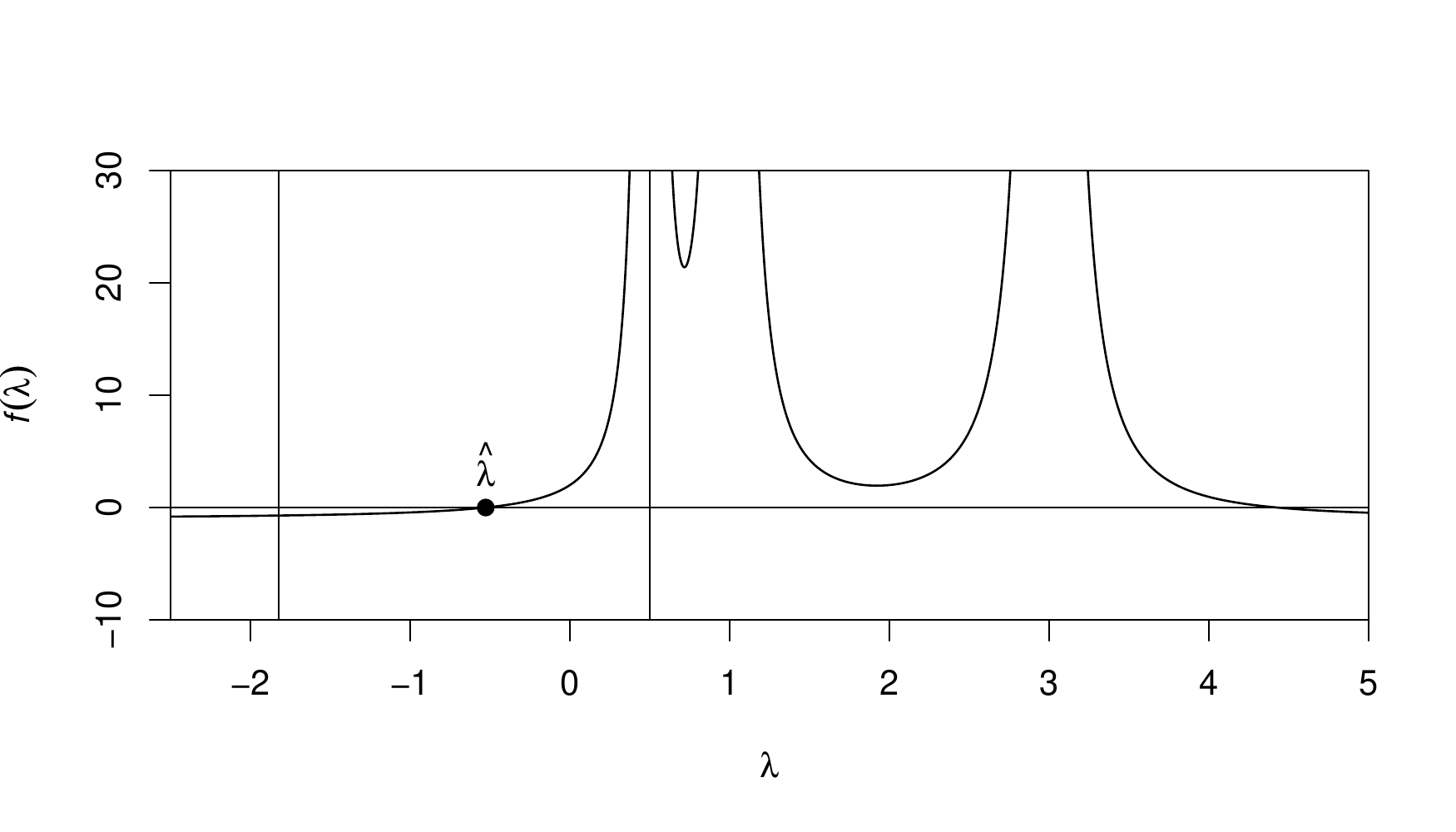}
\end{center}
\caption{Display of the functional form of $f(\lambda)$. The vertical lines are at $\lambda=l^{\circ}$\ and $\lambda=u^{\circ}$, defined in the
text.}\label{FIGURE}
\end{figure}
\end{example}

\end{document}